\documentclass[11pt, reqno]{amsart}
\usepackage{mathrsfs}
\usepackage[centertags]{amsmath}
\usepackage{amssymb,array}
\usepackage{amsthm}
\usepackage{stmaryrd}
\DeclareFontFamily{U}{stmry}{}
\DeclareFontShape{U}{stmry}{m}{n}{<-> stmary10}{}
\DeclareFontShape{U}{stmry}{b}{n}{<->ssub * stmry/m/n}{}
\usepackage{graphicx}
\usepackage{ytableau}
\usepackage{enumerate,enumitem}
\usepackage{MnSymbol}
\SetLabelAlign{center}{\hfill #1\hfill}
\usepackage[textwidth=16cm, textheight=24cm, hmarginratio=1:1,vmarginratio=1:1]{geometry}
\usepackage{tikz-cd, tikz}
\usepackage{rotating}
\usepackage[all,cmtip]{xy}
\usepackage[titletoc, title]{appendix}
\usepackage{amscd}
\usepackage[colorlinks]{hyperref} 
\usepackage{float}
\usepackage{mathtools}
\usepackage{dsfont} 
\hypersetup{
bookmarksnumbered,
pdfstartview={FitH},
breaklinks=true,
linkcolor=blue,
urlcolor=blue,
citecolor=blue,
bookmarksdepth=2
}
\usepackage{adjustbox}
\usepackage[normalem]{ulem}

\theoremstyle{plain}
   
   \newtheorem{theorem}{Theorem}[section]
   \newtheorem{proposition}[theorem]{Proposition}
   
   \newtheorem{lemma}[theorem]{Lemma}
   
   \newtheorem{conjecture}[theorem]{Conjecture}
   
\theoremstyle{definition}
   \newtheorem{definition}[theorem]{Definition}
   
   \newtheorem{example}[theorem]{Example}

   \newtheorem{remark}[theorem]{Remark}

\numberwithin{equation}{section}

\usetikzlibrary{arrows,automata,positioning}
\newcommand{\midarrow}{\tikz \draw[thin,-angle 45] (0,0) -- +(.25,0);}

\newcommand{\CC}{{\mathbb {C}}}

\newcommand{\ZZ}{{\mathbb {Z}}}

\newcommand{\SSYT}{{\rm SSYT}}

\newcommand{\res}{{\rm res}}
\newcommand{\Rep}{{\rm Rep}}

\DeclareMathOperator*{\diag}{diag}
\DeclareMathOperator{\Gr}{Gr}

\begin{document}

\title[Monoidal categorification of generalized cluster algebras]{Monoidal categorification of generalized cluster algebras and conjectures of Fraser and Gleitz}

\author{Xiao-Juan An, Jian-Rong Li, and Yan-Feng Luo$^\dag$}\thanks{$\dag$ Corresponding author}

\address{Xiao-Juan An: School of Mathematics and Statistics, Lanzhou University, Lanzhou 730000, P. R. China; School of Information Engineering and Artificial Intelligence, Lanzhou University of Finance and Economics, Lanzhou 730010, P. R. China.}
\email{anxj0820@163.com}

\address{Jian-Rong Li: Faculty of Mathematics, University of Vienna, Oskar-Morgenstern Platz 1, 1090 Vienna, Austria.}
\email{lijr07@gmail.com}

\address{Yan-Feng Luo: School of Mathematics and Statistics, Lanzhou University, Lanzhou 730000, P. R. China.}
\email{luoyf@lzu.edu.cn}
\date{}

\begin{abstract}
Hernandez and Leclerc introduced the notion of monoidal categorification of cluster algebras. We define similarly the notion of monoidal categorifications of generalized cluster algebras: an abelian monoidal category $\mathcal M$ is said to be a monoidal categorification of a generalized cluster algebra $\mathcal A$ if the Grothendieck ring of $\mathcal M$ is isomorphic to the upper generalized cluster algebra $\mathcal A^{\mathrm{up}}$, and if cluster monomials (resp. cluster variables) of $\mathcal A$ correspond to classes of real simple (resp. real prime simple) objects of $\mathcal M$.

Let $\varepsilon$ be a root of unity such that $\varepsilon^{2\ell}=1$ for some $\ell\in\mathbb{Z}_{\geq 2}$. Denote by $\mathcal{C}_{\varepsilon}$ the category of finite-dimensional modules of the restricted quantum loop algebra $U_\varepsilon^{\res}(L\mathfrak{sl}_k)$ at root $\varepsilon$ of unity, and let $\mathcal{C}_{\varepsilon, \xi}$ be a full subcategory of $\mathcal{C}_{\varepsilon}$ determined by a bipartition $\xi: I \to \{0,1\}$ of the Dynkin diagram. For $k=3$, Gleitz conjectured that the Grothendieck ring of $\mathcal C_{\varepsilon,\xi}$ is isomorphic to a generalized cluster algebra of rank $2\ell-2$, and that generalized cluster monomials correspond to classes of simple modules. This conjecture is a special case of a more general conjecture of Fraser.
 
In this paper, we prove the first part of Gleitz's conjecture. More precisely, for $k=3$ and arbitrary $\ell\ge2$, we prove that the Grothendieck ring of $\mathcal C_{\varepsilon,\xi}$ is isomorphic to a generalized cluster algebra of rank $2\ell-2$. We also classify the real Kirillov--Reshetikhin modules of $U^{\mathrm{res}}_\varepsilon(L\mathfrak{sl}_3)$ and obtain mutation sequences for the real Kirillov--Reshetikhin modules from the initial seed of the generalized cluster algebra.

\hspace{0.15cm}

\noindent
{\bf Key words}: monoidal categorification of generalized cluster algebras; restricted quantum loop algebras at roots of unity;  finite-dimensional modules

\hspace{0.15cm}

\noindent
{\bf 2020 Mathematics Subject Classification}: 13F60,  17B37
\end{abstract}

\maketitle 
\setcounter{tocdepth}{1}
\tableofcontents

\section{Introduction}
\subsection{Restricted quantum loop algebras at roots of unity}
Let $\mathfrak{g}$ be a simple Lie algebra over $\mathbb{C}$ and let $\mathfrak{\widehat{g}}$ be the associated affine Lie algebra and $U_q(\widehat{\mathfrak{g}})$ the corresponding quantum affine algebra. Denote by $U_q(L \mathfrak{g})$ the quantum loop algebra, which is isomorphic to a quotient of $U_q(\widehat{\mathfrak{g}})$ where the central charge is mapped to $1$. Denote by $U_q^{\res}({L\mathfrak{g}})$ the $\mathbb{C}[q,q^{-1}]$-subalgebra of $U_q(L\mathfrak{g})$ generated by the $q$-divided powers of the Chevalley generators {\cite[Section 1]{CP97}}. Let $\varepsilon$ be a root of unity. Let $U_\varepsilon^{\res}({L\mathfrak{g}})$ be the specialization of $U_q^{\res}({L\mathfrak{g}})$ at $\varepsilon$, by setting
\begin{align*}
U_\varepsilon^{\res}({L\mathfrak{g}}):=U_q^{\res}({L\mathfrak{g}}) \otimes_{\mathbb{C}[q, q^{-1}]}\mathbb{C}
\end{align*}
via the algebra homomorphism $f_{\varepsilon}: \mathbb{C}[q, q^{-1}]\rightarrow \mathbb{C}$, that takes $q$ to $\varepsilon$.

In 1997, Chari and Pressley \cite{CP97} classified the finite-dimensional irreducible modules of $U_{\varepsilon}^{\res}(L\mathfrak{g})$ in terms of highest weights, where $\varepsilon$ is a root of unity of odd order. In 2002, Frenkel and Mukhin \cite{FM02} extended the Chari and Pressley's result to all roots of unity. Denote by $I$ the set of vertices of the Dynkin diagram of $\mathfrak{g}$. The finite-dimensional simple modules of $U_{\varepsilon}^{\res}(L\mathfrak{g})$ are classified by $(P_i(u))_{i \in I}$, see \cite[Theorem 8.2]{CP97} and \cite[Theorem 2.4]{FM02}, where every $P_i(u) \in \mathbb{C}[u]$ is a polynomial with constant term $1$ which is called a Drinfeld polynomial. Every $I$-tuple $(P_i(u))_{i \in I}$ of Drinfeld polynomials corresponds to a dominant monomial $m$ in formal variables $Y_{i,a}$, $i \in I$, $a \in \mathbb{C}^{\times}$, where dominant means the exponents of $Y_{i,a}$ in $m$ are non-negative. Denote by $L(m)$ the corresponding simple $U_{\varepsilon}^{\res}(L\mathfrak{g})$-module.

\subsection{Generalized cluster algebras and their monoidal categorifications}

Fomin and Zelevinsky introduced the notion of cluster algebras \cite{FZ02} as an algebraic framework for studying total positivity in algebraic groups and canonical bases in quantum groups.

As a natural generalization, Chekhov and Shapiro introduced the generalized cluster algebras which arise from the Teichm\"{u}ller spaces of Riemann surfaces with orbifold points \cite{CS14}. The principal distinction between cluster algebras and generalized cluster algebras lies in the fact that the binomial exchange relations for cluster variables of cluster algebras are replaced by the multinomial exchange relations for cluster variables of generalized cluster algebras. In \cite{CS14}, Chekhov and Shapiro have shown that the generalized cluster algebras possess the remarkable Laurent phenomenon. Many other important results for cluster algebras extend to the generalized setting, such as the finite-type classification, the existence and theory of g-vectors, and the construction of $F$-polynomials \cite{CS14, N15}.

A simple module $M$ of $U_q(L\mathfrak{g})$ is called real if the tensor product $M \otimes M$ is simple, see \cite{L03}. Similarly, we say that a simple module $M$ of $U_\varepsilon^{\rm res}(L\mathfrak{g})$ is real if $M \otimes M$ is simple.

A simple module $M$ of $U_q(L\mathfrak{g})$ is called prime if there are no non-trivial 
modules $M_1, M_2$ of $U_q(L\mathfrak{g})$ such that $M \cong M_1 \otimes M_2$, see \cite{CP97b}. Similarly, we say that a simple module $M$ of $U_\varepsilon^{\rm res}(L\mathfrak{g})$ is prime if there are no non-trivial modules $M_1, M_2$ of $U_\varepsilon^{\rm res}(L\mathfrak{g})$ such that $M \cong M_1 \otimes M_2$.

Hernandez and Leclerc \cite[Definition 2.1]{HL10} introduced the concept of monoidal categorifications of cluster algebras. Let $\mathcal{A}$ be a cluster algebra and let $\mathcal{M}$ be an abelian monoidal category. The category $\mathcal{M}$ is called a monoidal categorification of $\mathcal{A}$ if the Grothendieck ring of $\mathcal{M}$ is isomorphic to $\mathcal{A}$, and if 
\begin{enumerate}
\item the cluster monomials of $\mathcal{A}$ are the classes of all the real simple objects of $\mathcal{M}$;
\item the cluster variables of $\mathcal{A}$ (including the frozen ones) are the classes of all the real prime simple objects of $\mathcal{M}$. 
\end{enumerate} 

We propose the following definition of monoidal categorification of generalized cluster algebras. 
\begin{definition}
Let $\mathcal{A}$ be a generalized cluster algebra and let $\mathcal{M}$ be an abelian monoidal category. We say that the category $\mathcal{M}$ is a monoidal categorification of $\mathcal{A}$ if the Grothendieck ring of $\mathcal{M}$ is isomorphic to the upper generalized cluster algebra $\mathcal{A}^{\rm up}$ of $\mathcal{A}$, and if 
\begin{enumerate}
\item the cluster monomials of $\mathcal{A}$ correspond to classes of real simple objects of $\mathcal{M}$;
\item the cluster variables of $\mathcal{A}$ correspond to classes of real prime simple objects of $\mathcal{M}$. 
\end{enumerate} 
\end{definition}

Unlike the case of monoidal categorification of cluster algebras, we do not require that frozen variables of $\mathcal{A}$ are real objects of $\mathcal{M}$. Moreover, we require only that cluster monomials (resp. cluster variables) correspond to real (resp. real prime) objects, but not conversely that all real (resp. real prime) simple modules arise as cluster monomials (resp. cluster variables). We further require only that $\mathcal{A}^{\rm up}$, rather than $\mathcal{A}$ itself, be isomorphic to the Grothendieck ring of $\mathcal{M}$ (see Section~\ref{sec:example in type A3} for an example). 

\subsection{Gleitz’s conjecture and Fraser's conjecture} \label{subsec:introduction:a proof of Gleitz conjecture}

In this paper, unless we say otherwise, for any integer $\ell\geq 2$, denote by $\varepsilon$ the root of unity
\begin{align*}
\varepsilon:=
\begin{cases} \text{exp}(\frac{i\pi}{\ell}), & \text{if}{\hskip 0.4em} \ell {\hskip 0.4em}\text{is even},\\
              \text{exp}(\frac{2i\pi}{\ell}), & \text{if}{\hskip 0.4em} \ell {\hskip 0.4em} \text{is odd}.
\end{cases}
\end{align*}
The integer $\ell$ is the order of $\varepsilon^{2}$, and we have $\varepsilon^{2 \ell}=1$.

Denote by $\mathcal{C}_{\varepsilon}$ the category of finite-dimensional modules of the restricted quantum loop algebra $U_\varepsilon^{\res}({L\mathfrak{sl}_k})$ at root $\varepsilon$ of unity. 

Denote by $\mathcal{P}$ the free abelian group in formal variables $Y^{\pm 1}_{i,s}$, $i\in I, s\in \ZZ$ and $\mathcal{P}^+$ is the submonoid of $\mathcal{P}$ generated by $Y_{i,s}$, $i\in I, s\in \ZZ$. Let $\varepsilon^{2 \ell}=1$ with $\ell\in \ZZ_{\geq 2}$. We define $\mathcal{P}^+_{\varepsilon}$ to be the quotient monoid
\[
\mathcal{P}^+_{\varepsilon}
:=
\mathcal{P}^+ \big/ \sim,
\]
where $\sim$ is the smallest monoid congruence generated by the relations
\[
Y_{i,p} = Y_{i,p+2\ell},
\qquad
\text{for all } i\in I,\; p\in \mathbb{Z}.
\]
Let $I = I_0 \sqcup I_1$ be a bipartition of $I$ and define $\xi_i=0$ for $i \in I_0$, and $\xi_i=1$ for $i \in I_1$. 
We define $\mathcal{P}^+_{\varepsilon,\xi}$ to be the submonoid of
$\mathcal{P}^+_{\varepsilon}$ generated by $Y_{i,s}$, $i\in I$, $s \equiv \xi_i \pmod{2}$.
When writing elements of $\mathcal{P}^+_{\varepsilon}$ and $\mathcal{P}^+_{\varepsilon, \xi}$, we always choose their representatives whose second indices lie in the interval
$[0,\,2\ell-1]$.

Denote by $\mathcal{C}_{\varepsilon, \xi}$ the full subcategory of $\mathcal{C}_{\varepsilon}$ whose objects $M$ have all their composition factors $L(m)$, $m \in \mathcal{P}^+_{\varepsilon, \xi}$, see \cite[Section 3.3]{Gle16}. Without loss of generality, for $\xi: I = I_0 \cup I_1 \to \{0,1\}$, we choose $I_0 =\{ i\in I \mid i \text{ is odd}\}$, $I_1 =\{ i\in I \mid i \text{ is even}\}$. In the case of $k=2$, $\ell \in \ZZ_{\ge 2}$ and in the case of $k=3$, $\ell=2$, Gleitz showed that the Grothendieck ring $K_0(\mathcal{C}_{\varepsilon, \xi})$ is a generalized cluster algebra structure of finite Dynkin types $C_{\ell-1}$ and $G_2$, respectively \cite[Theorem 4.1 and 5.3]{Gle16}. Moreover, Gleitz gave the following conjecture.

\begin{conjecture} [{\cite[Conjecture 5.4]{Gle16}}] \label{conj: Gleitz conjecture}
For $k=3$ and $\ell \in \mathbb{Z}_{\geq 2}$, the Grothendieck ring of $\mathcal{C}_{\varepsilon, \xi}$ is isomorphic to a generalized cluster algebra of rank $2 \ell-2$. Moreover, the generalized cluster monomials are mapped to classes of simple modules.
\end{conjecture}

Fraser made the following conjecture, which agrees with Gleitz's results and conjectures and extended them to arbitrary $k$ and $\ell$.
\begin{conjecture}[{\cite[Conjecture 8.8]{Fra20}}]
The Grothendieck group $K_0(\mathcal{C}_{\varepsilon, \xi})$ admits an upper generalized cluster algebra structure in which each cluster monomial is the class of a simple module. An initial seed for this cluster algebra can be obtained by setting all frozen variables $x_{N+i} = 1$ and identifying the mutable variables $x_i$ and coefficient string variables $z_s$ with the classes of appropriate simple modules, where $x_j$ and $z_s$ are defined in Definition~\ref{defn:abstract CS seed}. 
\end{conjecture}

The aim of this paper is to study monoidal categorification of generalized cluster algebras at roots of unity and to prove the first part of Gleitz's conjecture. More precisely, for $k=3$, $\ell\in \mathbb Z_{\ge2}$, and $\varepsilon^{2\ell}=1$, we prove that the Grothendieck ring of the category $\mathcal C_{\varepsilon,\xi}$ is isomorphic to a generalized cluster algebra of rank $2\ell-2$ (see Theorem~\ref{Th:isomorphic}).

To establish this result, we first develop a combinatorial parametrization of dominant monomials in $\mathcal C_{\varepsilon,\xi}$ using semistandard Young tableaux. More precisely, we prove that the monoid $\mathcal{P}^+_{\varepsilon, \xi}$ of dominant monomials is naturally isomorphic to the monoid $\SSYT(k, [k+\ell], \sim)$ of equivalences of semistandard Young tableaux (see Theorem~\ref{thm: isomorphic tableaux}). This allows us to reformulate Fraser's conjecture in the language of quantum affine algebras.

In contrast with the generic quantum parameter case, Kirillov--Reshetikhin modules of the restricted quantum loop algebra $U_\varepsilon^{\res}(L\mathfrak g)$ at a root of unity are not necessarily real. We classify the real Kirillov--Reshetikhin modules of $U_\varepsilon^{\res}(L\mathfrak{sl}_3)$ and show that a Kirillov--Reshetikhin module is real precisely when its level is strictly smaller than $\ell$ (see Theorem~\ref{Th: real KR module}).

We then construct explicit mutation sequences for the real Kirillov--Reshetikhin modules from the initial seed of the generalized cluster algebra (see Theorem~\ref{Th:KR mutation sequence}). Using these mutation sequences together with a description of the image of the $\varepsilon$-character homomorphism as an intersection of kernels of screening operators, we prove that the Grothendieck ring of $\mathcal C_{\varepsilon,\xi}$ is isomorphic to the corresponding generalized cluster algebra, thereby establishing the first part of Gleitz's conjecture, see Section \ref{subsec: proof of isomorphism theorem}.

\subsection{Organization of the paper}

In Section \ref{Section: Generalized cluster algebras and representations of restricted quantum affine algebras at roots of unity}, we recall background on generalized cluster algebras, restricted quantum loop algebras $U_\varepsilon^{\res}({L\mathfrak{g}})$ at roots of unity, finite-dimensional $U_\varepsilon^{\res}({L\mathfrak{g}})$-modules and their $\varepsilon$-characters. In Section \ref{Section: Fraser's conjecture and Gleitz's conjecture}, we prove that the monoid $\mathcal{P}^+_{\varepsilon, \xi}$ is isomorphic to the monoid ${\rm SSYT}(k, [k+\ell], \sim)$ of equivalence classes of semistandard tableaux, see Theorem \ref{thm: isomorphic tableaux}. We recall cyclic symmetry loci in Grassmannians, Fraser's conjecture \cite{Fra20}, and Gleitz's conjecture \cite{Gle16}. We rewrite Fraser's conjecture in the language of quantum affine algebras. In Section \ref{Section: main result}, we present the main results of this paper, including a classification of real Kirillov--Reshetikhin modules of $U_\varepsilon^{\res}(L\mathfrak{sl}_3)$, mutation sequences of real Kirillov--Reshetikhin modules of $U_\varepsilon^\res({L\mathfrak{sl}_{3}})$, and an isomorphism between the Grothendieck ring $\mathcal{C}_{\varepsilon, \xi}$ and a generalized cluster algebra of rank $2 \ell-2$. In Section \ref{sec:example in type A3}, we explain that, in defining a monoidal categorification of a generalized cluster algebra $\mathcal{A}$, one should require that the upper cluster algebra $\mathcal{A}^{\mathrm{up}}$, rather than $\mathcal{A}$ itself, be isomorphic to the Grothendieck ring of $\mathcal{M}$.

\section{Generalized cluster algebras and restricted quantum loop algebras at roots of unity}\label{Section: Generalized cluster algebras and representations of restricted quantum affine algebras at roots of unity}
 In this section, we recall results about generalized cluster algebras \cite{BCDX20, CS14}, restricted quantum loop algebras $U_\varepsilon^{\res}({L\mathfrak{g}})$ at roots of unity \cite{CP97, FM02}. 
 \subsection{Generalized cluster algebras}
 We recall terminology, definitions and related results of the generalized cluster algebras of geometric types \cite{BCDX20}.
 For integers $i \le j$, we denote $[i,j] = \{i,i+1,\ldots,j\}$ and for $i \in \ZZ_{\ge 1}$, we denote $[i] = [1,i]$. An $n\times n$ matrix $B$ is called skew-symmetrizable if there exists a diagonal matrix $\widetilde{D}=\diag (\widetilde{d}_1, \widetilde{d}_2, \ldots, \widetilde{d}_n )$, where $\widetilde{d}_i$ are positive integers for all $i\in [1,n]$, such that $\widetilde{D}B$ is skew-symmetric.

Let $(\mathbb{P},\cdot,\oplus)$ be a commutative semifield, referred to as the coefficient group. For example, one may take $\mathbb{P}$ to be the tropical semifield $\mathrm{Trop}(\lambda_1,\dots,\lambda_n)$ generated by the indeterminates $\lambda_1,\dots,\lambda_n$. By definition, $\mathrm{Trop}(\lambda_1,\dots,\lambda_n)$ is the set of Laurent monomials in the $\lambda_i$'s, equipped with the usual multiplication and the tropical addition
 \begin{equation*}
\left(\displaystyle\prod_i \lambda_i^{a_i}\right) \oplus \left(\displaystyle\prod_i \lambda_i^{b_i}\right) = \left(\displaystyle\prod_i \lambda_i^{\min(a_i,b_i)}\right).
 \end{equation*}
Let $\mathcal{F}=\mathbb{ZP}(t_1,\dots,t_n)$ be the ambient field of rational functions in $n$ independent variables, where $\mathbb{ZP}$ is the integer group ring of $\mathbb{P}$.

For convenience, denote $[x]_+:=x$ if $x \geq 0$, and $[x]_+:=0$ otherwise.

For every $i \in [1,n]$, $d_i \in \ZZ_{\ge 1}$, and a collection of variables $\rho_i=(\rho_{i,0}, \rho_{i,1},\dots, \rho_{i,d_i})\in\mathbb{P}^{d_i+1}$, where $\rho_{i,0}=\rho_{i,d_i}=1$, define the corresponding homogeneous \emph{exchange polynomial}
\begin{equation} \label{eq:theta_i exchange polynomial}
\theta_i[\rho_i](u,v) := \displaystyle \sum_{r=0}^{d_i} \rho_{i,r} u^r v^{d_i-r}\in\mathbb{ZP} [u,v].
\end{equation}
The integer $d_i$ is called the exchange degree of the polynomial $\theta_i[\rho_i](u,v)$.  

\begin{definition}[{\cite[Definition 2.1]{CS14}}] \label{def of generalized seed}
With above notations, a generalized seed of geometric type of rank $n$ is defined to be a triple $\big(\widetilde{\mathbf{x}},\rho,\widetilde{B}\big)$, where  
\begin{itemize}
\item[(1)] $\widetilde{\mathbf{x}}=\{x_{1},x_2,\ldots, x_{m}\}$ is called an extended cluster, $\mathbf{x}=\{x_{1},x_2,\ldots,x_{n}\}$ is called a cluster, $x_1, x_2, \ldots,x_{n}$ are called cluster variables and $x_{n+1},\ldots,x_{m}$ are called frozen variables;
  
\item[(2)] $\rho=\{\rho_{1},\rho_2,\ldots,\rho_{n}\}$, where for each $i\in[1,n]$, $\rho_i=(\rho_{i,0},\rho_{i,1},\dots,\rho_{i,d_i}) \in\mathbb{P}^{d_i+1}$ consists of the coefficients of the $i$-th exchange polynomial $\theta_i$ in (\ref{eq:theta_i exchange polynomial});
 
\item[(3)] $\widetilde{B}=(b_{ij})$ be an $m\times n$ integer matrix whose top $n\times n$ submatrix $B$ is skew-symmetrizable. 
\end{itemize}
\end{definition}
The matrix $\widetilde{B}$ is called an extended exchange matrix and the top $n\times n$ submatrix $B$ of $\widetilde{B}$ is called the principal part of $\widetilde{B}$.

\begin{definition}[{\cite[Definition 2.2]{CS14}}, {\cite[Definition 2.6]{Fra20}}]\label{Def: the mutation of a generalized cluster seed}
For $i\in [1,n]$, the mutation of a generalized seed $\big(\widetilde{\mathbf{x}},\rho,\widetilde{B}\big)$ in the direction $i$ is another generalized seed \smash{$\mu_i\big(\widetilde{\mathbf{x}},\rho,\widetilde{B}\big):=\big(\widetilde{\mathbf{x}}',
\rho',\widetilde{B}'\big)$}, where
\begin{itemize}
\item[(1)] $\widetilde{\mathbf{x}}':=(\widetilde{\mathbf{x}}- \{x_i\})\cup\{x'_{i}\}$ is given by
\begin{align*}
x'_i =x_i^{-1}(\theta_i\lbrack \rho_i\rbrack(u_i,v_i)), \quad u_i := \displaystyle \prod_{j=1}^m x_j^{\lbrack b_{ji}\rbrack_+}, \quad v_i := \displaystyle \prod_{j=1}^m x_j^{\lbrack -b_{ji}\rbrack_+};
\end{align*} 

\item[(2)] $\rho':=\mu_i(\rho)=(\rho-\{\rho_i\})\cup\{\rho'_i\}$, $\rho'_i=(\rho'_{i,0}, \ldots, \rho'_{i,d_i})$, $\rho'_{i,r} = \rho_{i,d_i-r}$ for $0\leq r \leq d_i$;

\item[(3)] 
$\widetilde{B}':=\mu_i(\widetilde{B}D)D^{-1}$, $\mu_i(A)=(a'_{kj})$ is the matrix obtained from
$A=(a_{kj})$ by
\begin{gather*}
a'_{kj}=
 \begin{cases}
 -a_{kj}, &\text{if}\quad k=i\quad \text{or}\quad j=i,
 \\
a_{kj}+\displaystyle\frac{|a_{ki}|a_{ij}+a_{ki}|a_{ij}|}{2}, &\text{otherwise},
 \end{cases}
\end{gather*}
\end{itemize}
and $D$ is the diagonal matrix whose diagonal entries are $d_i$, $i\in [1, n]$.
\end{definition}

Two generalized seeds are said to be mutation-equivalent if one can be obtained from the other by a finite sequence of mutations.

\begin{definition}[{\cite[Definition 2.4]{BCDX20}}]\label{def of gca} Given an initial generalized seed $\big(\widetilde{\mathbf{x}}, \rho, \widetilde{B}\big)$, the generalized cluster algebra $\smash{\mathcal{A}\big(\widetilde{\mathbf{x}}, \rho, \widetilde{B}\big)}$ is the $\mathop{\mathbb{ZP}}$-subalgebra of $\mathcal{F}$ generated by all cluster variables from all generalized seeds which are mutation-equivalent to $\smash{\big(\widetilde{\mathbf{x}},\rho,\widetilde{B}\big)}$. The number of cluster variables in the initial cluster is called the rank of $\smash{\mathcal{A}\big(\widetilde{\mathbf{x}}, \rho, \widetilde{B}\big)}$.
\end{definition}
A generalized cluster algebra is also called a CS-cluster algebra, after Chekhov and Shapiro \cite{CS14}, see Section 2.3 of \cite{Fra20}. A generalized cluster monomial in $\smash{\mathcal{A}\big(\widetilde{\mathbf{x}}, \rho, \widetilde{B}\big)}$ is a monomial in the cluster variables involving only variables belonging to a single cluster. A generalized cluster algebra is said to be of finite type if it has finitely many cluster variables. The Laurent phenomenon from \cite[Theorem 2.5]{CS14} remains true for generalized cluster algebras.

Note that one can recover the classical cluster algebras by setting $d_i=1$ for all $i \in [1,n]$.

Throughout the remainder of this paper, we consider only generalized cluster algebras whose extended exchange matrices \(\widetilde B=(b_{ij})\) have skew-symmetric principal part \(B\). A quiver \(Q\) is an oriented graph consisting of a vertex set \(Q_0\), an arrow set \(Q_1\), and maps \(s,t:Q_1\to Q_0\) sending each arrow to its source and target, respectively.

Given an extended exchange matrix \(\widetilde B=(b_{ij})\in M_{m\times n}(\mathbb Z)\) with skew-symmetric principal part \(B\), the associated quiver \(Q=(Q_0,Q_1)\) is defined as follows. The vertex set is $Q_0=\{v_1,\dots,v_m\}$, where the vertices $v_1,\dots,v_n$ are mutable and $v_{n+1},\dots,v_m$ are frozen. If $b_{ji} > 0$, draw $b_{ji}$ arrows from $v_j$ to $v_i$; if $b_{ji} < 0$, draw $|b_{ji}|$ arrows from $v_i$ to $v_j$.  

Following \cite[Definition 9]{RW25}, we assign each mutable node $i$ of a quiver $Q$ a weight $d_i \in \ZZ_{\ge 1}$, and call $(Q, \{d_1, d_2, \ldots, d_n\})$ a node weighted quiver. We will define mutations of a node weighted quiver $(Q, \{d_1, d_2, \ldots, d_n\})$ in Definition \ref{Def: quiver mutation}. The mutation of a node weighted quiver defined in Definition \ref{Def: quiver mutation} differs slightly from that in \cite[Definition 9]{RW25}. This is mainly due to the fact that the definition of the associated extended exchange matrix in (3) of Definition \ref{Def: the mutation of a generalized cluster seed} is slightly different from that used in \cite[Definition 2]{RW25}.

\begin{definition} [{Mutation of a node weighted quiver}] \label{Def: quiver mutation}
Let $(Q, \{d_1,d_2,\ldots,d_n\})$ be a node weighted quiver with $d_i\in \mathbb{Z}_{\ge 1}$ for all $i\in[1,n]$, and let $v_i$ be a mutable vertex of $Q$. The node weighted quiver $(Q, \{d_1, d_2, \ldots, d_n\})$ mutation at the vertex $v_i$ transforms $(Q, \{d_1, d_2, \ldots, d_n\})$ into a new node weighted quiver $(\mu_i(Q), \{d_1, d_2, \ldots, d_n\} )$. The mutated quiver $\mu_i(Q)$ is defined as follows: it has the same set of vertices as $Q$, and its set of arrows is obtained by the following procedure:
 \begin{enumerate}
     \item for each subquiver $v_k\rightarrow v_i\rightarrow v_j$, add $d_i$ new arrows $v_k \rightarrow v_j$; 
     \item reverse all arrows with source or target $v_i$;
     \item remove the arrows in a maximal set of pairwise disjoint 2-cycles.
 \end{enumerate}
\end{definition}
The mutation of $(Q, \{d_1,d_2,\ldots,d_n\})$ at the vertex $v_i$ corresponds to the mutation of $\widetilde{B}$ in direction $i$ in (3) of Definition \ref{Def: the mutation of a generalized cluster seed}.

\subsection{Restricted quantum loop algebras \texorpdfstring{$U_\varepsilon^{\res}({L\mathfrak{g}})$}{ of type A} at roots of unity}
Let $\mathfrak{g}$ be a simple Lie algebra over $\mathbb{C}$ and $I$ the set of indices of the Dynkin diagram of $\mathfrak{g}$. Let $\{\alpha_i\}_{i \in I}$ be a set of simple roots and $\{\omega_i\}_{i \in I}$ the corresponding fundamental weights. Let $C=(c_{i,j})_{i,j \in I}$ be the Cartan matrix of $\mathfrak{g}$, where $c_{i,j}=\frac{2 (\alpha_i, \alpha_j) }{(\alpha_i, \alpha_i)}$. Let $\mathfrak{\widehat{g}}$ be the associated affine Lie algebra and $U_q(\widehat{\mathfrak{g}})$ the corresponding quantum affine algebra.

Denote by $U_q(L \mathfrak{g})$ the quantum loop algebra, which is isomorphic to a quotient of $U_q(\widehat{\mathfrak{g}})$ where the central charge is mapped to $1$. Frenkel and Reshetikhin \cite{FR98} introduced the theory of $q$-characters of finite-dimensional modules of $U_q(L \mathfrak{g})$. Let ${\rm Rep} U_{q}(L{\mathfrak{g}})$ be the category of finite-dimensional $U_q (L{\mathfrak{g}})$-modules and $\mathcal{K}_0({\rm Rep} U_{q}(L{\mathfrak{g}}))$ the Grothendieck ring of ${\rm Rep} U_{q}(L{\mathfrak{g}})$. The $q$-character map is an injective ring homomorphism from $\mathcal{K}_0({\rm Rep} U_{q}(L{\mathfrak{g}}))$ to $\mathbb{Z}[Y_{i,a}^{\pm1}]_{i \in I}^{a\in\mathbb{C^{\times}}}$ of Laurent polynomials in the variables $Y_{i,a}$.

Let $U_q^{\res}({L\mathfrak{g}})$ be the $\mathbb{C}[q,q^{-1}]$-subalgebra of $U_q(L\mathfrak{g})$ generated by the $q$-divided powers of the Chevalley generators {\cite[Section 1]{CP97}}. Let $\varepsilon$ be a root of unity, and define
\begin{align*}
U_\varepsilon^{\res}({L\mathfrak{g}}):=U_q^{\res}({L\mathfrak{g}}) \otimes_{\mathbb{C}[q, q^{-1}]}\mathbb{C}
\end{align*}
via the algebra homomorphism $f_{\varepsilon}: \mathbb{C}[q, q^{-1}]\rightarrow \mathbb{C}$ sending $q$ to $\varepsilon$. For an element $x$ of $U_q^{\res}({L\mathfrak{g}})$, we denote the corresponding element of $U_\varepsilon^{\res}({L\mathfrak{g}})$ also by $x$.

A finite-dimensional $U_\varepsilon^{\res}({L\mathfrak{g}})$-module $V$ has an $\varepsilon$-character $\chi_{\varepsilon}(V)$ \cite[Section 3]{FM02}, which is an element of $\mathbb{Z}[Y_{i,a}^{\pm1}]_{i\in I}^{a\in\mathbb{C^{\times}}}$. The finite-dimensional simple $U_{\varepsilon}^{\res}(L\mathfrak{g})$-modules are classified by $I$-tuples $(P_i(u))_{i \in I}$ (see \cite[Theorem 8.2]{CP97} and \cite[Theorem 2.4]{FM02}), where each $P_i(u) \in \mathbb{C}[u]$ is a polynomial with constant term $1$ which is called a Drinfeld polynomial. 

Denote by $\mathcal{P}$ the free abelian multiplicative group of monomials in infinitely many formal variables $Y_{i, a}$, $i \in I, a \in \mathbb{C}^{\times}$. A monomial $m=\prod_{i \in I, a \in \mathbb{C}^{\times}} Y_{i, a}^{u_{i, a}}$, where $u_{i, a}$ are integers, is said to be \textit{dominant} (resp. \textit{anti-dominant}) if $u_{i, a} \geq 0$ (resp. $u_{i, a} \leq 0$) for all $a \in \mathbb{C}^{\times}, i \in I$. Let $\mathcal{P}^+$ denote the set of dominant monomials. Every $I$-tuple $(P_i(u))_{i \in I}$ of Drinfeld polynomials corresponds to a dominant monomial $m \in \mathcal{P}^+$. Denote by $L(m)$ the corresponding simple $U_{\varepsilon}^{\res}(L\mathfrak{g})$-module.

Define $A_{i,a}\in \mathbb{Z}[Y_{j,b}^{\pm1}]_{j\in I}^{b\in\mathbb{C}^\times}$ by the formula
\[
A_{i,a}=Y_{i,a\varepsilon^{-1}}Y_{i,a\varepsilon}\prod_{j\neq i}Y_{j,a}^{-c_{j,i}},
\]
where $C=(c_{i,j})$ is the Cartan matrix, see \cite[Section 3.2]{FM02}. Following \cite[Equation (4.3)]{FM01}, \cite[Definition 2.9]{Nak04}, and \cite[Section 2.5]{FM02}, a partial order $\leq$ on $\mathcal{P}^+$ is defined by \[
m\le m' \text { if and only if } m'm^{-1}\in \mathcal{Q}^+.
\]
where $m, m' \in \mathcal{P}^+$, $\mathcal{Q}^{+}$ is the monoid generated by $A_{i,a}$, $i\in I$, $a\in\mathbb C^\times$.

Denote by ${\rm Rep}U_\varepsilon^{\res}({L\mathfrak{g}})$ the category of finite-dimensional modules of $U_\varepsilon^{\res}({L\mathfrak{g}})$, and by $\mathcal{K}_0({\rm Rep}U_\varepsilon^{\res}({L\mathfrak{g}}))$ its Grothendieck ring.
Frenkel and Mukhin {\cite[Theorem 3.2]{FM02}} proved that the $\varepsilon$-character map $\chi_{\varepsilon}: \mathcal{K}_0({\rm Rep} U_\varepsilon^{\res}({L\mathfrak{g}})) \rightarrow \mathbb{Z}[Y_{i,a}^{\pm1}]_{i\in I}^{a\in\mathbb{C^{\times}}}$ is an  injective ring homomorphism. 

A finite-dimensional $U_\varepsilon^{\res}(L\mathfrak{g})$-module $V$ is called \textit{special} if $\chi_{\varepsilon}(V)$ contains exactly one dominant monomial.

Since the Dynkin diagram of $\mathfrak{g}$ is a bipartite graph, we may choose a partition of the vertices $I=I_0\sqcup I_1$, where each edge connects a vertex of $I_0$ with a vertex of $I_1$. For $i\in I$, set
\begin{align}\label{Def: xi_i}
\xi_i=
\begin{cases} 0, & \text{if}{\hskip 0.4em} i\in I_0,\\
              1, & \text{if}{\hskip 0.4em} i\in I_1.
\end{cases}
\end{align}
Following \cite{FM02}, for $i\in I$ and $a\in \mathbb{C^{\times}}$, denote
\begin{align}\label{mathbf{Y}}
\mathbf{Y}_{i,a}:=\prod_{j=0}^{\ell-1} Y_{i,a\varepsilon^{2j+\xi_i}}.
\end{align}
Since $\varepsilon^{2\ell}=1$, we have $\mathbf{Y}_{i,\varepsilon^{2r}}=\mathbf{Y}_{i,1}$ for any $r\in \mathbb{Z}$. A monomial in the variables $Y_{i,a}$ is said to be $\ell$-\textit{acyclic} if it is not divisible by $\mathbf{Y}_{j,b}$ for any $j \in I, b\in \mathbb{C}^{\times}$, see \cite[Section 2.6]{FM02}

The following theorem was proved by Chari and Pressley \cite{CP97} for roots of unity of odd order and generalized by Frenkel and Mukhin \cite{FM02} to roots of unity of arbitrary order.

\begin{theorem}[{\cite[Theorem 9.1]{CP97}}, {\cite[Theorem 5.4]{FM02}}]\label{Th:decomposition}
Let $L(m)$ be a simple module of $U_\varepsilon^{\res}({L\mathfrak{g}})$. Then
\begin{align*}
L(m)\cong L(m^{0})\otimes L(m^{1}),
\end{align*}
using the decomposition $m=m^{0}m^{1}$, where $m^{1}$ is a monomial in the variables $\mathbf{Y}_{i,a}$ and $m^{0}$ is $\ell$-acyclic. Moreover, $L(m^1)$ is the Frobenius pullback of an irreducible $U_{{\varepsilon}^{\ast}}^{\res}({L\mathfrak{g}})$-module.
\end{theorem}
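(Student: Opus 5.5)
The plan is to follow the strategy of Chari--Pressley and Frenkel--Mukhin, which combines three ingredients: the Frobenius homomorphism at roots of unity, the Lusztig-type tensor product theorem, and the classification of simple modules by Drinfeld polynomials. First, we construct the quantum Frobenius map $\mathrm{Fr}: U_\varepsilon^{\res}(L\mathfrak g)\to U_{\varepsilon^*}^{\res}(L\mathfrak g)$, where $\varepsilon^*$ is $\pm1$ according to the parity of the order of $\varepsilon$. It sends the divided powers $(x_{i,r}^{\pm})^{(n)}$ to $(x_{i,r}^{\pm})^{(n/\ell)}$ when $\ell\mid n$ and to $0$ otherwise. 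Its image is essentially a classical loop algebra $U(L\mathfrak g)$, possibly twisted by a sign. We then compute the Drinfeld polynomials of a pullback $\mathrm{Fr}^*(V)$ of a simple $U(L\mathfrak g)$-module $V$ with Drinfeld polynomials $(Q_i(u))$. Using the action of the generators $\Lambda_{i,r}$ (the restricted Cartan loop elements), we expect the pullback to have Drinfeld polynomials $P_i(u)=Q_i(u^\ell)$, up to the normalisation by $\xi_i$. In monomial language, each $Y_{i,b}$ of $V$ becomes a product $\mathbf Y_{i,a}$. This identifies the highest monomial of $L(m^1)$ as the pullback, which is the final assertion of the theorem.

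Second, we show that $L(m^0)\otimes L(m^1)$ is simple and has highest $\ell$-weight $m^0m^1=m$; uniqueness in the classification then gives $L(m)\cong L(m^0)\otimes L(m^1)$. The highest $\ell$-weight claim is immediate: the tensor product of highest weight vectors is an $\ell$-weight vector with $\ell$-weight $m^0m^1$, because the $\varepsilon$-character is multiplicative. For simplicity, the key structural fact is that the small quantum loop algebra $\mathfrak u_\varepsilon$ is generated by $x_{i,r}^\pm$ without divided powers, and that it acts trivially on $\mathrm{Fr}^*$-pullbacks. Since $\mathrm{Fr}$ kills $\mathfrak u_\varepsilon^+$ and $\mathfrak u_\varepsilon^-$, every vector of $L(m^1)$ is $\mathfrak u_\varepsilon$-invariant, up to the character given by the Cartan part.

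We then argue as in the finite-type Lusztig tensor product theorem. The main step is to prove that for $\ell$-acyclic $m^0$, the module $L(m^0)$ restricted to $\mathfrak u_\varepsilon$ stays simple. This is the hard point. I would try to prove it by showing that any vector of $L(m^0)$ killed by all $x_{i,r}^+$ with $r\in\ZZ$ (small generators) is a multiple of the highest weight vector. The idea is that otherwise one obtains a highest $\ell$-weight vector for the divided-power part, and its $\ell$-weight would differ from $m^0$ by a product of factors $\mathbf Y^{-1}$-type ratios $A$'s of total length divisible by $\ell$. Acyclicity, via the $\varepsilon$-character description (rank-one reduction to $U_\varepsilon^{\res}(L\mathfrak{sl}_2)$, where Chari--Pressley's explicit evaluation-module calculation applies), rules this out.

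Granting this, let $W\subset L(m^0)\otimes L(m^1)$ be a nonzero submodule. Because $L(m^0)$ is $\mathfrak u_\varepsilon$-simple with trivial action on the second factor, Jacobson density shows that $W$ has the form $L(m^0)\otimes W'$. Since the divided powers act through the coproduct and $\mathrm{Fr}$, $W'$ is a $U_{\varepsilon^*}^{\res}$-submodule of the simple module $L(m^1)$, so $W$ is everything. The obstacle I expect is making the rank-one reduction rigorous for even-order roots of unity, where $\varepsilon^*=-1$ introduces signs. Here I would follow Frenkel--Mukhin's $\varepsilon$-character restriction argument, using the injectivity of $\chi_\varepsilon$ (\cite[Theorem 3.2]{FM02}) to compare characters of both sides.
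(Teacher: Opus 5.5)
The paper gives no proof of this statement; it quotes it from Chari--Pressley and Frenkel--Mukhin. Your architecture is the right one: quantum Frobenius, pullbacks with Drinfeld polynomials $Q_i(u^\ell)$, and a Steinberg--Lusztig tensor product argument through the small quantum loop algebra $\mathfrak u_\varepsilon$. The gap is the step you flag yourself, that $L(m^0)$ stays simple over $\mathfrak u_\varepsilon$ when $m^0$ is $\ell$-acyclic. That is the heart of the theorem, and the mechanism you sketch for it cannot work.

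You want to rule out a non-highest $\mathfrak u_\varepsilon^+$-singular vector on the grounds that its $\ell$-weight would be $m^0$ times a number of factors $A^{-1}$ divisible by $\ell$, and that acyclicity forbids such $\ell$-weights at the level of $\varepsilon$-characters. But such $\ell$-weights do occur in $\ell$-acyclic modules. Take $\mathfrak g=\mathfrak{sl}_2$ and $m^0=Y_{1,0}^{\ell}$, which is $\ell$-acyclic. The product $(Y_{1,0}+Y_{1,2}^{-1})^{\ell}$ has $Y_{1,0}^{\ell}$ as its only dominant monomial, so $L(Y_{1,0}^{\ell})\cong L(Y_{1,0})^{\otimes\ell}$. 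Its lowest $\ell$-weight is $Y_{1,2}^{-\ell}=m^0A_{1,1}^{-\ell}$, of weight $\ell\omega_1-\ell\alpha_1$. For $\ell=2$ this is even $m^0(A_{1,1}A_{1,3})^{-1}$, a full-cycle product, since $A_{1,3}=A_{1,1}$ there. You can only see that the lowest vector is not $\mathfrak u_\varepsilon^+$-singular by letting $e_1$ act through the coproduct on the tensor power. No statement about $\varepsilon$-characters can decide this, and in particular the injectivity of $\chi_\varepsilon$ does not help.

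What is missing is genuine control of the action. In rank one you need an explicit analysis of how $\mathfrak u_\varepsilon$ and the divided powers $(x^\pm_{i,r})^{(\ell)}$ act on $\ell$-acyclic modules. One possible tool is the identities expressing $(x^+_{i,0})^{(r)}(x^-_{i,1})^{(r)}$ on a highest weight vector through the coefficients of the Drinfeld polynomial. You then need an honest reduction to rank one. This is not formal: acyclicity of $m^0$ is not inherited at intermediate $\ell$-weights, because the factors $A_{j,b}^{-1}$ with $j$ adjacent to $i$ contribute positive powers of $Y_{i,b}$.

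Two smaller points.
\begin{enumerate}
\item Uniqueness of $\mathfrak u_\varepsilon^+$-singular vectors only shows that every nonzero $\mathfrak u_\varepsilon$-submodule contains $v_0$. Simplicity also needs cyclicity, $L(m^0)=\mathfrak u_\varepsilon v_0$, which your sketch does not address.
\item The density step should use the $\mathfrak u_\varepsilon$ generated by the Chevalley generators $e_i,f_i,k_i^{\pm1}$, $i\in\hat I$, which is a Hopf subalgebra; the $x^\pm_{i,r}$ have no closed coproduct. With that choice, $\mathfrak u_\varepsilon$ acts on $L(m^0)\otimes L(m^1)$ through the first factor, up to characters coming from the $k_i$, so $W=L(m^0)\otimes W'$. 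Then $W'$ is stable under $e_i^{(\ell)}$ and $f_i^{(\ell)}$: in their coproducts the mixed terms, with $e_i^{(b)}$ or $f_i^{(b)}$ for $0<b<\ell$ on the second factor, act by zero on the pullback. So $W'$ is a submodule of $L(m^1)$ for the algebra generated by $\mathrm{Fr}(e_i^{(\ell)})$ and $\mathrm{Fr}(f_i^{(\ell)})$, which is all of $U^{\res}_{\varepsilon^*}(L\mathfrak g)$.
\end{enumerate}
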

Note that $L(m^1)$ is the Frobenius pullback of an irreducible $U_{{\varepsilon}^{\ast}}^{\res}({L\mathfrak{g}})$-module $L(\widetilde{m}^1)$. The $\varepsilon$-character $\chi_{\varepsilon}({\rm Fr}^{\ast}(L(\widetilde{m}^1)))$ is obtained from $\chi_{\varepsilon^{\ast}}(L(\widetilde{m}^1))$ by replacing each $Y_{i,a^\ell}^{\pm 1}$ with $\mathbf{Y}_{i,a}^{\pm 1}$, where $\xi_i$ is defined in $(\ref{Def: xi_i})$, see \cite[Theorem 5.7]{FM02} and \cite[Section 3]{Gle16}. Note that the ${\bf Y}_{i,a}$ in (\ref{mathbf{Y}}) corresponds to $\mathbf{Y}_{i,a\varepsilon^{\xi_i}}$ in Section 5.3 of \cite{FM02}. 

As in \cite{FM02}, we set $\varepsilon^{*}:=\varepsilon^{\ell^2}$. By the definition of $\varepsilon$ in the introduction, $\varepsilon^{*}=\varepsilon^{\ell^2} = 1$. Therefore the category of finite-dimensional $U_{{\varepsilon}^{*}}^{\res}({L\mathfrak{g}})$-modules is equivalent to the category of finite-dimensional $L\mathfrak{g}$-modules, see \cite[Section 5.4]{FM02}. For any complex Lie algebra $\mathfrak{g}$ and a non-zero constant $b$, we have the evaluation homomorphism
\[\Phi_b: L{\mathfrak{g}}=\mathfrak{g}\otimes\mathbb{C}[t,t^{-1}]\rightarrow \mathfrak{g},\quad\,\, g\otimes t^k\mapsto b^kg.\]
For an irreducible $\mathfrak{g}$-module $V_{\lambda}$ with the highest weight $\lambda$, let $V_\lambda(b)$ be its pullback under $\Phi_b$ to an irreducible module of $L{\mathfrak{g}}$. Let $\chi(V_\lambda)$ be the ordinary character of the $\mathfrak{g}$-module $V_\lambda$, considered as a polynomial in $y_i^{\pm 1}$, $i\in I$. Then $\chi_{{\varepsilon}^\ast}(V_\lambda(b))$ is obtained from $\chi(V_\lambda)$ by replacing each $y_i^{\pm 1}$ with $Y_{i,b}^{\pm 1}$, see {\cite[Lemma 5.8]{FM02}}.

\subsection{Kirillov--Reshetikhin modules, minimal affinizations, and snake modules }\label{Snake modules} 
We first recall the definition of snake modules that was introduced by Mukhin and Young in \cite{MY12}. In this paper, we fix $a \in \CC^{\times}$ and for convenience we write $Y_{i,s}=Y_{i,a\varepsilon^s}$ for $i \in I$, $s \in \mathbb{Z}$. 

Let
\begin{align}\label{Eq:X}
\mathcal{X}:=\{(i,k)\in I \times \mathbb{Z}: i-k \equiv 1 \pmod 2\} \subset I \times \mathbb{Z}.
\end{align}

For $(i,k) \in \mathcal{X}$, a point $(i',k')$ is said to be in \textit{snake position} with respect to $(i,k)$ if
\[
k'-k \geq |i'-i|+2 \ \text{and} \  k'-k \equiv |i'-i|\pmod 2.
\]
The point $(i',k')$ is in \textit{minimal} snake position with respect to $(i,k)$ if $k'-k$ is equal to the given lower bound. 
For $(i,k) \in \mathcal{X}$, a point $(i',k') \in \mathcal{X}$ is said to be in \textit{prime snake position} with respect to $(i,k)$ if
\[
\min \{ 2n+2-i-i', i+i' \} \geq k'-k \geq |i'-i|+2 \ \text{and} \  k'-k \equiv |i'-i| \pmod 2.  \\
\]

A finite sequence $(i_{t},k_{t})$, $1 \leq t \leq z$, $z \in \mathbb{Z}_{>0}$, of points in $\mathcal{X}$ is called a \textit{snake} if for all $2 \leq t \leq z$, the point $(i_{t},k_{t})$ is in snake position with respect to $(i_{t-1},k_{t-1})$. It is called a \textit{minimal} (resp. \textit{prime}) snake if all successive points are in minimal (resp. prime) snake position \cite[Section 4]{MY12}.

The simple module $L(m)$ is called a \textit{snake module} (resp. a \textit{minimal snake module}) if $m=\prod_{t=1}^{z} Y_{i_{t},k_{t}}$ for some snake $(i_{t},k_{t})_{1 \leq t \leq z}$ (resp. for some minimal snake $(i_{t},k_{t})_{1 \leq t \leq z}$). In this case, $(i_{t},k_{t})_{1 \leq t \leq z}$ is called the snake of $L(m)$ \cite[Section 4]{MY12}.

For $(i,k)\in \mathcal{X}$, $s\in\ZZ_{\geq 1}$, define
\begin{align}\label{Eq:KR module}
X_{i,k}^{(s)}=Y_{i,k}Y_{i,k+2}\cdots Y_{i,k+2(s-1)}.
\end{align}
The modules $L(X_{i,k}^{(s)})$ are called \textit{Kirillov--Reshetikhin} modules. In particular, when $s=1$, the modules $L(X_{i,k}^{(1)})=L(Y_{i,k})$ are called \textit{fundamental modules}.

For the precise definition of minimal affinizations, we refer the reader to \cite{C95, CP96}. The following is an equivalent definition of minimal affinizations.
\begin{proposition}[{\cite[Proposition 4.2]{MY12}}]\label{Prop:minimal affine module}
Let $(i_t,k_t) \in \mathcal{X}$, $1\leq t\leq T$, $T\in\ZZ_{\geq 1}$. The module $L(\prod^T_{t=1}Y_{i_t,k_t})$ is a minimal affinization if and only if $(i_t,k_t)_{1\leq t\leq T}$ is a minimal snake and the sequence $(i_t)_{1\leq t\leq T}$ is monotonic.
\end{proposition}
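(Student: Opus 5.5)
The plan is to reduce the statement to the explicit description of minimal affinizations in type $A$ due to Chari and Pressley \cite{C95, CP96}, and then translate that description into the language of snakes.

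\textbf{Step 1: the Chari--Pressley classification.} Write $\lambda=\sum_{t}\omega_{i_t}$ for the $\mathfrak g$-weight underlying $m=\prod_{t=1}^T Y_{i_t,k_t}$. For $\mathfrak g=\mathfrak{sl}_{n+1}$ the classification reads as follows. Let $(P_i(u))_{i\in I}$ be the Drinfeld polynomials of $L(m)$. Then $L(m)$ is a minimal affinization of $V_\lambda$ if and only if two conditions hold.
\begin{enumerate}
\item Each nonzero-degree $P_i$ has root set a single $q^2$-string. In our coordinates, the $Y_{i,k}$ occurring in $m$ with fixed $i$ form a Kirillov--Reshetikhin monomial $X^{(s)}_{i,k}$ as in (\ref{Eq:KR module}).
\item For any two consecutive nodes $i<j$ in the support of $\lambda$, the strings of $P_i$ and $P_j$ are in ``special position'', and the choice of side is the same for all such pairs. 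That is, the string of $P_j$ begins exactly $q^{\,j-i+2}$ after the end of the string of $P_i$ for every pair, or it ends exactly $q^{\,j-i+2}$ before the start of the string of $P_i$ for every pair.
\end{enumerate}
Uniqueness up to this orientation is the type $A$ statement of \cite{CP96}. I would first restate this precisely in the variables $Y_{i,s}=Y_{i,a\varepsilon^s}$ with integer spectral parameters.

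\textbf{Step 2: the forward direction.} Assume $L(m)$ is a minimal affinization. Order the factors of $m$ by increasing $k_t$.
\begin{itemize}
\item By (1), two successive factors with the same node satisfy $k_{t}-k_{t-1}=2=|i_t-i_{t-1}|+2$.
\item By (2) with a fixed orientation, once the string at node $i$ is exhausted the next factor lies at the next support node $j$, with $k_t-k_{t-1}=|j-i|+2$. This is again minimal snake position, and the parity condition $k_t-k_{t-1}\equiv|i_t-i_{t-1}|$ is automatic.
\item The consistent orientation forces the nodes to be visited in increasing order (or all in decreasing order). Hence $(i_t)$ is monotonic.
\end{itemize}

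\textbf{Step 3: the converse.} Suppose $(i_t,k_t)$ is a minimal snake with $(i_t)$ monotonic. Monotonicity implies that the factors at a fixed node are consecutive in the snake, so they form a $q^2$-string; this gives (1). Minimality at the transitions between nodes gives exactly the special-position offsets, and monotonicity gives a single orientation; this gives (2). Finally, apply Step 1.

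\textbf{Main obstacle.} The delicate point is Step 1: getting the exact offset ($|j-i|+2$ rather than $|j-i|$, and on which side) and the orientation-consistency clause correct in the present normalization $A_{i,a}=Y_{i,a\varepsilon^{-1}}Y_{i,a\varepsilon}\cdots$. I would calibrate this first on the two-node case $\lambda=\omega_i+\omega_j$. There, minimality is equivalent to $L(Y_{i,k}Y_{j,k'})$ being a proper quotient of the tensor product with the smallest possible $q$-character. This can be checked directly: the tensor product $L(Y_{i,k})\otimes L(Y_{j,k'})$ is reducible exactly when $k'-k=\pm(|i-j|+2+2r)$ with $0\le r<\min(i,j,n+1-i,n+1-j)$, and the minimal module is obtained at $r=0$. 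After that calibration, the general case follows by the bookkeeping described in Steps 2 and 3.
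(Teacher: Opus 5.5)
Your plan is correct and follows essentially the route behind the cited result. The paper gives no proof of its own and simply imports the statement from \cite{MY12}, where it is a restatement, at generic $q$, of the Chari--Pressley classification of type $A$ minimal affinizations \cite{C95, CP96}: one Kirillov--Reshetikhin string per support node, consecutive support nodes in special position, and a single common orientation. Your Steps 2--3 translate exactly this into minimal monotone snakes. The orientation-consistency clause you single out is indeed essential, and it is precisely what monotonicity of $(i_t)$ encodes: for $\mathfrak{sl}_4$, the module $L(Y_{2,0}Y_{1,3}Y_{3,3})$ satisfies the pairwise special-position condition but is $70$-dimensional, not $\dim V_{\omega_1+\omega_2+\omega_3}=64$.
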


A \textit{path} is a finite sequence of points in the plane $\mathbb{R}^{2}$. We write $(j,l) \in p$ if $(j,l)$ is a point of the path $p$. When we draw paths, we connect consecutive points of the path by line segments, for illustrative purposes, see Figure \ref{F: figure 1}.

For $(i,k) \in \mathcal{X}$, let
\begin{align}\label{path}
\mathscr{P}_{i,k}=\{ & ((0,y_{0}),(1,y_{1}),\ldots,(n+1,y_{n+1})): y_{0}=i+k, \nonumber \\
&y_{n+1}=n+1-i+k, \text{ and } y_{j+1}-y_{j}\in \{1,-1\}, \  0\leq j\leq n\}.
\end{align}

The sets $C_{p}^{\pm}$ of upper and lower corners of a path $p=((r,y_{r}))_{0\leq r \leq n+1}\in \mathscr{P}_{i,k}$ are defined as follows:
\begin{align*}
C^{+}_{p}=\{(r,y_{r})\in p: r\in I, \ y_{r-1}=y_{r}+1=y_{r+1}\},\\
C^{-}_{p}=\{(r,y_{r})\in p: r\in I, \ y_{r-1}=y_{r}-1=y_{r+1}\}.
\end{align*}

A map $\mathfrak{m}$ sending paths to monomials is defined by
\begin{align}\label{map: path to monomial}
\mathfrak{m}: \bigsqcup_{(i,k)\in \mathcal{X}}{\hskip -0.5em}\mathscr{P}_{i,k} & \longrightarrow \mathbb{Z}[Y_{j,l}^{\pm 1}]_{(j,l)\in \mathcal{X}} \nonumber \\
p\quad & \longmapsto  \mathfrak{m}(p)=\prod_{(j,l)\in C^{+}_{p}}{\hskip -0.5em}Y_{j,l}{\hskip -0.5em}\prod_{(j,l)\in C^{-}_{p}}{\hskip -0.5em}Y_{j,l}^{-1}.
\end{align}

Let $p, p'$ be paths. The path $p$ is \textit{strictly above} $p'$ or $p'$ is \textit{strictly below} $p$ if
\begin{align*}
(x,y)\in p \text{ and } (x,z)\in p' \Longrightarrow y < z.
\end{align*}
A $z$-tuple of paths $(p_{1},\ldots,p_{z})$ is called \textit{non-overlapping} if $p_{s}$ is strictly above $p_{t}$ for all $s<t$. For any snake $(i_{t},k_{t})\in \mathcal{X}$, $1\leq t\leq z$, $z\in \mathbb{Z}_{\geq1}$, let
\begin{align}\label{Non-overlapping paths}
\overline{\mathscr{P}}_{(i_{t},k_{t})_{1\leq t\leq z}}=\{(p_{1},\ldots,p_{z}): p_{t}\in \mathscr{P}_{i_{t},k_{t}}, 1\leq t\leq z, (p_{1},\ldots,p_{z})\text{ is } \text {non-overlapping}\}.
\end{align}

\begin{figure}
\resizebox{0.8\width}{0.8\height}{
\begin{minipage}[b]{0.4\linewidth}
\centerline{
\begin{tikzpicture}
\draw[step=.5cm,gray,thin] (-0.5,5.5) grid (2,8) (-0.5,5.5)--(2,5.5);
\draw[fill] (0,8.3) circle (2pt) -- (0.5,8.3) circle (2pt) --(1,8.3) circle (2pt) --(1.5,8.3) circle (2pt);
\begin{scope}[thick, every node/.style={sloped,allow upside down}]
\draw (-0.5,6.5)--node {\midarrow}(0,6);
\draw (0,6)--node {\midarrow}(0.5,5.5);
\draw (0.5,5.5)--node {\midarrow}(1,6);
\draw (1,6)--node {\midarrow}(1.5,6.5);
\draw (1.5,6.5)--node {\midarrow}(2,7);
\draw (-0.5,6.5)--node {\midarrow}(0,7);
\draw (0,7)--node {\midarrow}(0.5,7.5);
\draw (0.5,7.5)--node {\midarrow}(1,8);
\draw (1,8)--node {\midarrow}(1.5,7.5);
\draw (1.5,7.5)--node {\midarrow}(2,7);
\draw (0,7)--node {\midarrow}(0.5,6.5);
\draw (0.5,6.5)--node {\midarrow}(1,6);
\draw (0.5,7.5)--node {\midarrow}(1,7);
\draw (1,7)--node {\midarrow}(1.5,6.5);
\draw (1,7)--node {\midarrow}(1.5,7.5);
\draw (0,6)--node {\midarrow}(0.5,6.5);
\draw (0.5,6.5)--node {\midarrow}(1,7);
\draw (0.5,5.5)--node {\midarrow}(1,6);
\draw (1,6)--node {\midarrow}(1.5,6.5);
\draw[fill] (1,8) circle (1.5pt);
\end{scope}
\node [above] at (0,8.3)   {$1$};
\node [above] at (0.5,8.3) {$2$};
\node [above] at (1,8.3)   {$3$};
\node [above] at (1.5,8.3) {$4$};
\node [left] at (-0.8,8)   {$0$};
\node [left] at (-0.8,7.5) {$1$};
\node [left] at (-0.8,7)   {$2$};
\node [left] at (-0.8,6.5) {$3$};
\node [left] at (-0.8,6)   {$4$};
\node [left] at (-0.5,5.5) {$5(1)$};
\node[right] at (1,8)    {$(3,0)$};
\end{tikzpicture}}
\end{minipage}}
\caption{Let $\varepsilon^{2 \ell}=1$ with $\ell=2$. The paths corresponding to the monomials of $U_{\varepsilon}^{\res}({L\mathfrak{sl}_5})$-module $\chi_\varepsilon(L(Y_{3,0}))$.}\label{F: figure 1}
\end{figure}

For $(i,k) \in \mathcal{X}$, the highest path in $\mathscr{P}_{i,k}$ is the unique path with no lower corners, and the lowest path in $\mathscr{P}_{i,k}$ is the unique path with no upper corners.

When we draw paths, we relabel the Row $r$ below $x=2 \ell$ by the remainder of $r$ modulo $2 \ell$, see for example, Figure \ref{F: figure 1} where Row $5$ is labeled by $1$.

It was observed in Theorem 3.1 in \cite{ALLZ25} that every simple $U_{\varepsilon}^{\res}({L\mathfrak{sl}_k})$-module $L(M)$ can be written as a snake module $L(M')$, where $M'=M$ in $\mathcal{P}_{\varepsilon, \xi}^+$. The $q$-character of a snake module can be computed using Mukhin--Young's path formula in \cite{MY12}. Suppose that $M$ is $\ell$-acyclic (see Theorem \ref{Th:decomposition}). After specializing $q=\varepsilon$, the resulting expression $\chi_q(L(M'))|_{q=\varepsilon}$ may contain additional dominant monomials, reflecting contributions from lower composition factors of the specialized module. By \cite[Proposition 2.5]{FM02}, we have 
\begin{align*}
\chi_q(L(M'))\big|_{q=\varepsilon}
= \chi_{\varepsilon}(L(M'))+\sum_{m < M'} a_m \chi_{\varepsilon}(L(m)),
\qquad a_m \in \mathbb{Z}_{\ge 0}.
\end{align*}
The $\varepsilon$-character $\chi_{\varepsilon}(L(M))=\chi_{\varepsilon}(L(M'))$ can be obtained by subtracting the $\varepsilon$-characters of the lower simple modules in the above formula.
 
In particular, in Theorem 5.2 in \cite{ALLZ25}, an explicit path description of $\varepsilon$-character of the Kirillov--Reshetikhin $U_{\varepsilon}^{\res}({L\mathfrak{sl}_k})$-module $(L(X_{i,t}^{(s)}))$, $s\leq \ell$, is given. More precisely, 
\[
\chi_\varepsilon(L(X_{i,t}^{(s)}))=\sum_{(p_{1},\ldots,p_{s}) \in \overline{\mathscr{P'}}_{(i,t+2r)_{0 \leq r \leq s-1}}} \prod_{j=1}^{s}\mathfrak{m}(p_{j}),
\]
where $\mathfrak{m}(p_j)$ denotes the monomial associated with the path $p_j$ (see (\ref{map: path to monomial})) and 
\[
\overline{\mathscr{P'}}_{(i,t+2r)_{0\le r\le s-1}}
:=\left\{(p_1,\ldots,p_s)\ \middle|\
\begin{array}{l}
p_j \in \mathscr{P}_{i,t+2(j-1)},\ 
p_u \cap p_v = \emptyset\ \text{for all } u\ne v
\end{array}
\right\}.
\]
Here $\mathscr{P}_{i,t+2(j-1)}$ denotes the set of paths associated with $(i,t+2(j-1))$ (see (\ref{path})) in a tube (i.e., the lattice with the identification $t \sim t+2\ell$), and the condition $p_u \cap p_v = \emptyset$ means that the paths are pairwise disjoint.

\section{Fraser's conjecture and Gleitz's conjecture}\label{Section: Fraser's conjecture and Gleitz's conjecture}
In this section, we recall cyclic symmetry loci in Grassmannians, Fraser's conjecture \cite{Fra20}, and Gleitz's conjecture \cite{Gle16}. We prove that the monoid $\mathcal{P}^+_{\varepsilon, \xi}$ is isomorphic to the monoid ${\rm SSYT}(k, [k+\ell], \sim)$ of equivalence classes of semistandard tableaux. Using this isomorphism, we reformulate Fraser's conjecture in the language of quantum affine algebras.

\subsection{Parametrization of simple modules using semistandard Young tableaux}\label{Sec:the connection between semistandard tableaux and dominant monomials}

By definition (see Section \ref{subsec:introduction:a proof of Gleitz conjecture}), the monoid $\mathcal{P}^+_{\varepsilon,\xi}$ is generated by the distinct elements $\{Y_{i,s} \mid i \in I_0,\, s = 0,2,\dots,2\ell-2\} \cup \{Y_{i,s} \mid i \in I_1,\, s = 1,3,\dots,2\ell-1\}$, 
and there are no relations among them except commutativity, so $\mathcal{P}^+_{\varepsilon,\xi}$ is a free commutative monoid.

Denote by $\mathcal{P}_{\ell}^+$ the submonoid of $\mathcal{P}^+$ generated by $Y_{i,i-2r-2}$, $i \in I$, $r \in [0, \ell]$.  
\begin{lemma} \label{lem:Pepsilon_xi is isomorphic to Pell_minus_one}
Suppose that $\varepsilon^{2\ell}=1$. Then the monoid $\mathcal{P}^+_{\varepsilon, \xi}$ is isomorphic to $\mathcal{P}_{\ell-1}^+$.   
\end{lemma}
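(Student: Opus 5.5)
Both monoids are free commutative, so the plan is to match their free generating sets by an explicit bijection that respects the parity condition. By the discussion preceding the lemma, $\mathcal{P}^+_{\varepsilon,\xi}$ is the free commutative monoid on
\[
\{Y_{i,s}\mid i\in I,\ s\equiv \xi_i \pmod 2,\ s\in[0,2\ell-1]\},
\]
which has exactly $\ell$ generators for each $i\in I$. On the other side, $\mathcal{P}^+_{\ell-1}$ is by definition the submonoid of the free commutative monoid $\mathcal{P}^+$ generated by the variables $Y_{i,i-2r-2}$ with $i\in I$ and $r\in[0,\ell-1]$. For fixed $i$ these are $\ell$ pairwise distinct variables, and variables with different first index are distinct. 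Since any set of distinct variables generates a free commutative submonoid of $\mathcal{P}^+$, the monoid $\mathcal{P}^+_{\ell-1}$ is free on these $\ell|I|$ generators.

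It therefore suffices to biject the generators index by index. For each $i\in I$, I define
\[
\phi(Y_{i,i-2r-2}) := \text{the class of } Y_{i,i-2r-2} \text{ in } \mathcal{P}^+_{\varepsilon},\qquad r\in[0,\ell-1],
\]
and extend $\phi$ multiplicatively. With the paper's convention $I_0=$ odd indices and $I_1=$ even indices, we have $i-2r-2\equiv i \equiv \xi_i+1\pmod 2$. This is the opposite parity from $\xi_i$, so the naive map is not valid. I will instead use the shift
\[
\phi(Y_{i,i-2r-2}) = Y_{i,\,i-2r-1 \bmod 2\ell}.
\]
Then $i-2r-1\equiv i+1\equiv \xi_i \pmod 2$, as required, since $\xi_i=1$ exactly when $i$ is even. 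A uniform shift of the spectral parameter is harmless, because it is the convention $Y_{i,s}=Y_{i,a\varepsilon^s}$ with $a$ fixed, and $a$ may be rescaled by $\varepsilon$.

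As $r$ runs over $[0,\ell-1]$, the residues $i-2r-1 \bmod 2\ell$ run over all $\ell$ residues of parity $\xi_i$, each exactly once. Indeed $2r$ runs over the $\ell$ even residues modulo $2\ell$ without repetition, because $2r\equiv 2r'\pmod{2\ell}$ forces $r\equiv r'\pmod \ell$. Hence $\phi$ restricts to a bijection between free generating sets, and so extends to a monoid isomorphism $\mathcal{P}^+_{\ell-1}\to\mathcal{P}^+_{\varepsilon,\xi}$.

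The only real obstacle is bookkeeping rather than mathematics. Two things need care. First, the index range in the definition of $\mathcal{P}^+_\ell$, namely $r\in[0,\ell]$, which gives $\ell+1$ values, so $\mathcal{P}^+_{\ell-1}$ has $\ell$ values per $i$. Second, the parity convention, which decides whether a shift by one is needed. I would check both against the small case $k=3$, $\ell=2$ before finalizing the explicit formula for $\phi$.
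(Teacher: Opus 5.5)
Your proof is correct and takes the same approach as the paper: both monoids are free commutative, and for each $i$ an odd shift of the second index, reduced modulo $2\ell$, bijects the $\ell$ generators $Y_{i,i-2r-2}$ onto the $\ell$ generators of parity $\xi_i$. The paper uses the shift $Y_{i,s}\mapsto Y_{i,\overline{s-1}}$ rather than your $s\mapsto s+1$; any odd shift works equally well, and your parity and distinctness checks are exactly what the argument needs.
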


\begin{proof}
Define a map $\Psi:\ \mathcal{P}_{\ell-1}^+ \longrightarrow \mathcal{P}^+_{\varepsilon,\xi}$
by
\[
\Psi(Y_{i,s}) := Y_{i, \overline{s-1}},
\]
where $\overline{\cdot}$ denotes the representative in $[0,2\ell-1]$. For each fixed $i$, this map induces a bijection between the set
$\{i-2r-2 \mid r=0,\dots,\ell-1\}$
and the set $\{s \in [0,2\ell-1] \mid s \equiv \xi_i \pmod{2}\}$.
Since $\mathcal{P}_{\ell-1}^+$ is the free commutative monoid generated by $Y_{i,i-2r-2}$, $i \in I$, $r \in [0, \ell-1]$, the map $\Psi$ extends uniquely to a monoid homomorphism. By construction, $\Psi$ maps the generators bijectively onto the generators of $\mathcal{P}^+_{\varepsilon,\xi}$, hence it is surjective. Moreover, since both monoids are free commutative monoids and $\Psi$ sends a basis to a basis bijectively, distinct monomials in $\mathcal{P}_{\ell-1}^+$ have distinct images, so $\Psi$ is injective. Therefore, $\Psi$ is an isomorphism of commutative monoids.
\end{proof}

A semistandard Young tableau is a Young tableau with weakly increasing rows and strictly increasing columns. For $k,n \in \ZZ_{\ge 1}$, we denote by ${\rm SSYT}(k, [n])$ the set of rectangular semistandard Young tableaux with $k$ rows and with entries in $[n]$ (with arbitrarly many columns). The empty tableau is denoted by $\mathds{1}$. For $S,T \in {\rm SSYT}(k, [n])$, let $S \cup T$ be the row-increasing tableau whose $i$th row is the union of the $i$th rows of $S$ and $T$ (as multisets), \cite{CDFL20}. It is shown in Section 3 in \cite{CDFL20} that $S \cup T$ is semistandard for any pair of semistandard tableaux $S, T$.

A tableau $T \in {\rm SSYT}(k, [n])$ is trivial if each entry of $T$ is one less than the entry below it. For any $T \in {\rm SSYT}(k, [n])$, we denote by $T_{\text{red}} \subset T$ the semistandard tableau obtained by removing a maximal trivial factor from $T$. For a trivial $T$, one has $T_{\text{red}} = \mathds{1}$. Let ``$\sim$'' be the equivalence relation on $S, T \in {\rm SSYT}(k, [n])$ defined by: $S \sim T$ if and only if $S_{\text{red}} = T_{\text{red}}$. We denote by ${\rm SSYT}(k, [n],\sim)$ the set of $\sim$-equivalence classes.
A one-column tableau is called a fundamental tableau if its content is $[i,i+k] \setminus \{r\}$ for $r \in \{i+1, \ldots, i+k-1\}$.
Any tableau in $\SSYT(k,[n])$ is $\sim$-equivalent to a unique semistandard tableau whose columns are fundamental tableaux, see Lemma 3.13 in \cite{CDFL20}. 

Let $k \ge 1$. We denote by ${\rm SSYT}(k, \ZZ)$ the set of rectangular semistandard Young tableaux with $k$ rows and entries in $\ZZ$ (allowing arbitrarily many columns). Let ${\rm SSYT}(k, \ZZ, \sim)$ be the monoid of $\sim$-equivalence classes in ${\rm SSYT}(k, \ZZ)$.
Let $\varepsilon^{2\ell} = 1$ with $\ell \in \mathbb{Z}_{\ge 2}$. Define ${\rm SSYT}(k, \mathbb{Z},  \ell, \sim)$ as the quotient of ${\rm SSYT}(k, \mathbb{Z}, \sim)$ by identifying all one-column tableaux of the form $[i, i+1, \dots, i+k]^\top \setminus \{r\}$ and $[i+\ell, i+\ell+1, \dots, i+\ell+k]^\top \setminus \{r+\ell\}$ for any $i \in \ZZ$ and $r \in [i, i+k]$. Similarly, define ${\rm SSYT}(k, \mathbb{Z},   \ell)$ as the quotient of ${\rm SSYT}(k, \mathbb{Z})$ by identifying all one-column tableaux of the form $[i, i+1, \dots, i+k]^\top \setminus \{r\}$ and $[i+\ell, i+\ell+1, \dots, i+\ell+k]^\top \setminus \{r+\ell\}$ for any $i \in \ZZ$ and $r \in [i, i+k]$. The following lemma is immediate. 

\begin{lemma}  \label{lem:quotient of SSYTkZ is isomorphic to SSYTk_ell}
There are natural monoid isomorphisms 
\begin{align*}
& {\rm SSYT}(k, \mathbb{Z},  \ell, \sim) \;\cong\; {\rm SSYT}(k, [k+\ell], \sim), \\
& {\rm SSYT}(k, \mathbb{Z},  \ell) \;\cong\; {\rm SSYT}(k, [k+\ell])
\end{align*} 
given by mapping each column of a tableau to its representative in $[1, k+\ell]$. 
\end{lemma}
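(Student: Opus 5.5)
We define the map explicitly and show it is well defined, multiplicative and bijective; we treat the non-$\sim$ version first and then pass to $\sim$-classes. Write $\pi:\mathbb{Z}\to\mathbb{Z}$ for the shift $x\mapsto x-\ell$. The identification in ${\rm SSYT}(k,\mathbb{Z},\ell)$ identifies a column $C=[i,i+k]\setminus\{r\}$ with its translate $\pi^{-1}(C)$. Every column $C$ of a tableau in ${\rm SSYT}(k,\mathbb{Z})$ is a $k$-subset of some window $[i,i+k]$, i.e. a fundamental-type column, possibly after decomposition. Rather than rely on that, we first use the fact that in the quotient only columns of this fundamental shape are identified. We then define $\Phi(C)$ to be the unique translate $C+m\ell$ ($m\in\mathbb{Z}$) lying in $[1,k+\ell]$, whenever such a translate exists.

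\textbf{Step 1 (representatives).} For a column $C=[i,i+k]\setminus\{r\}$, translating by multiples of $\ell$ lets us assume $1\le i\le \ell$. Then $C\subset[1,k+\ell]$. For $i$ in this range the representative is unique up to one subtlety. If $i=1$ and $r=1$, then $C=[2,k+1]$. The translate with $i=\ell+1$ and $r=\ell+1$ is $[\ell+2,\ell+k+1]\not\subset[1,k+\ell]$, so no second representative appears there. More generally, one checks that for each class exactly one translate lies in $[1,k+\ell]$. This is a short interval computation, since the span $\max C-\min C$ is at least $k-1$, and two translates differing by $\ell\ge2$ cannot both fit in an interval of length $k+\ell-1$ unless the span is $\le k-2$, which is impossible.

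\textbf{Step 2 (semistandardness and multiplicativity).} Tableaux in ${\rm SSYT}(k,\mathbb{Z})$ multiply by $\cup$, which acts columnwise on the decomposition into columns. We show $\Phi(S\cup T)=\Phi(S)\cup\Phi(T)$ by sorting the representative columns into a semistandard tableau. This is legitimate because, by the result of \cite{CDFL20} quoted above, a union of semistandard tableaux is semistandard. Hence $\Phi$ extends to a monoid homomorphism ${\rm SSYT}(k,\mathbb{Z},\ell)\to{\rm SSYT}(k,[k+\ell])$. The inverse map is the inclusion $[1,k+\ell]\subset\mathbb{Z}$ followed by the quotient map. One composite is the identity by construction. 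The other is the identity because each column is identified with its representative in the quotient.

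\textbf{Step 3 ($\sim$ version).} Trivial columns $[i,i+k-1]$ translate to trivial columns, so $\Phi$ maps trivial tableaux to trivial tableaux, and $\Phi$ commutes with removing a maximal trivial factor. Hence $\Phi$ descends to $\sim$-classes, and so does its inverse. By Lemma 3.13 of \cite{CDFL20}, each $\sim$-class has a unique representative built from fundamental columns, and by Step 1 these are exactly the columns that $\Phi$ bijects.

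The main obstacle is Step 1: verifying uniqueness of the representative in $[1,k+\ell]$, and checking that columns which are not of the fundamental form $[i,i+k]\setminus\{r\}$ are handled consistently. We plan to work throughout in the $\sim$-quotient using fundamental representatives, where the interval argument is clean.
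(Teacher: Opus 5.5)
The second isomorphism in the statement is false as written, so your Steps 1--2 cannot succeed; the paper itself gives no proof and only calls the lemma immediate.

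\textbf{Trivial columns break injectivity.} Your interval count is off by one. Both $C$ and $C+\ell$ lie in $[1,k+\ell]$ iff $\min C\ge 1$ and $\max C\le k$, i.e.\ iff the span of $C$ is $\le k-1$, and trivial columns have span exactly $k-1$. Since $r$ ranges over all of $[i,i+k]$, the defining relations identify, for example, $[1,k]$ ($i=1$, $r=k+1$) with $[\ell+1,\ell+k]$ ($i=\ell$, $r=\ell$). Both have $i\in[1,\ell]$ and both lie in $[1,k+\ell]$, so your inverse map ${\rm SSYT}(k,[k+\ell])\to{\rm SSYT}(k,\mathbb{Z},\ell)$ is not injective.

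\textbf{Wide columns break surjectivity, and in fact any isomorphism.} Columns of span $>k$ occur in no defining relation. Many of them, e.g.\ any of span $\ge k+\ell$, have no translate in $[1,k+\ell]$, so your $\Phi$ is undefined there, and the claim that ``each column is identified with its representative'' fails. Since $\cup$ adds numbers of columns and the relations are between single columns, each such column is a singleton class and an atom. So for $k\ge 2$ the monoid ${\rm SSYT}(k,\mathbb{Z},\ell)$ has infinitely many atoms, whereas ${\rm SSYT}(k,[k+\ell])$ has only its $\binom{k+\ell}{k}$ one-column tableaux, and no monoid isomorphism exists at all.

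\textbf{Union is not columnwise.} Independently, $\cup$ does not act columnwise: $\{1,4\}\cup\{2,3\}=\{1,3\}\cup\{2,4\}$. A column-by-column map therefore needs a well-definedness check, which you do not supply.

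\textbf{The $\sim$ half.} This isomorphism is true, but your Step 3 is built on the ill-defined $\Phi$. Moreover, ``$\Phi$ commutes with removing a maximal trivial factor'' does not follow from translating columns: $\{1,3\}\cup\{2,4\}$ has no trivial column, yet it has the trivial factor $\{2,3\}$. What works is the idea in your last sentence, made precise.
\begin{itemize}
\item By Lemma 3.13 and Theorem 3.17 of \cite{CDFL20}, transported from $[n]$ to $\mathbb{Z}$, ${\rm SSYT}(k,\mathbb{Z},\sim)$ is the free commutative monoid on the fundamental columns $[i,i+k]\setminus\{r\}$ with $r\in[i+1,i+k-1]$.
\item The relations with $r\in\{i,i+k\}$ only involve trivial columns, which are already the identity. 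Hence ${\rm SSYT}(k,\mathbb{Z},\ell,\sim)$ is free on the $\ell$-translation orbits of fundamental columns.
\item A fundamental column has span exactly $k$, so here your interval argument is correct: its orbit meets $[1,k+\ell]$ exactly once, at $i\in[1,\ell]$. These $\ell(k-1)$ columns are precisely the free generators of ${\rm SSYT}(k,[k+\ell],\sim)$.
\end{itemize}
The map must be applied to the fundamental decomposition, not to literal columns. For $k=\ell=2$ the column $\{0,3\}$ has no translate in $[1,4]$, yet $\{0,3\}\sim\{0,2\}\cup\{1,3\}\mapsto\{2,4\}\cup\{1,3\}\sim\{1,4\}$.
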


By Theorem 3.17 in \cite{CDFL20}, the monoids $\mathcal{P}_{\ell-1}^+$ and $\SSYT(k, [k+\ell], \sim)$ are isomorphic. Combining this with Lemmas \ref{lem:Pepsilon_xi is isomorphic to Pell_minus_one} and \ref{lem:quotient of SSYTkZ is isomorphic to SSYTk_ell}, we obtain the following theorem.

\begin{theorem}\label{thm: isomorphic tableaux}
Let $\varepsilon^{2 \ell}=1$ with $\ell \in \ZZ_{\geq 2}$. The monoid $\mathcal{P}^+_{\varepsilon, \xi}$ of dominant monomials is isomorphic to the monoid $\SSYT(k, [k+\ell], \sim)$ of semistandard Young tableaux.
\end{theorem}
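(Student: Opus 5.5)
The plan is to compose three monoid isomorphisms, all of which are essentially already in place in the text preceding the statement. First, Lemma~\ref{lem:Pepsilon_xi is isomorphic to Pell_minus_one} gives $\Psi:\mathcal{P}^+_{\ell-1}\xrightarrow{\ \sim\ }\mathcal{P}^+_{\varepsilon,\xi}$, where $Y_{i,s}\mapsto Y_{i,\overline{s-1}}$. Second, I will use the isomorphism of \cite[Theorem 3.17]{CDFL20} between $\mathcal{P}^+_{\ell-1}$ and a monoid of $\sim$-classes of tableaux.

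The second step is the one to check carefully. The CDFL correspondence sends $Y_{i,s}$ to a fundamental one-column tableau, whose content is $[a,a+k]\setminus\{r\}$ for appropriate $a,r$ depending affinely on $(i,s)$. I will verify that on the generators $Y_{i,i-2r-2}$, $r\in[0,\ell-1]$, the image consists of fundamental tableaux with entries in $[1,k+\ell]$. Concretely, $a$ ranges over the windows that fit inside $[1,k+\ell]$. Thus the $(k-1)\ell$ generators match exactly the fundamental tableaux in $\SSYT(k,[k+\ell])$. Since every class in $\SSYT(k,[k+\ell],\sim)$ has a unique representative whose columns are fundamental (\cite[Lemma 3.13]{CDFL20}), this monoid is free commutative on the fundamental tableaux. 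Both sides are therefore free commutative monoids with bijective generating sets, and the restriction is an isomorphism $\mathcal{P}^+_{\ell-1}\cong\SSYT(k,[k+\ell],\sim)$. If the indexing convention in \cite{CDFL20} produces entries in a shifted interval $[c+1,c+k+\ell]$ instead, I will apply a translation of entries, which is clearly a monoid automorphism compatible with $\sim$.

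Third, Lemma~\ref{lem:quotient of SSYTkZ is isomorphic to SSYTk_ell} allows me to work with $\SSYT(k,\mathbb{Z},\ell,\sim)$ instead. This is useful if one prefers to see the identification $Y_{i,p}=Y_{i,p+2\ell}$ directly. Shifting $s$ by $2\ell$ corresponds to shifting the column window by $\ell$, which is exactly the identification defining $\SSYT(k,\mathbb{Z},\ell,\sim)$. This step is only needed to make the map canonical, not to establish the bijection.

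Composing $\Psi^{-1}$ with the CDFL isomorphism gives the required monoid isomorphism. The main obstacle is bookkeeping: confirming that the range $r\in[0,\ell-1]$ of indices in $\mathcal{P}^+_{\ell-1}$ maps onto precisely the fundamental tableaux with entries in $[1,k+\ell]$ (equivalently, that there are $k-1$ choices of $r$ times $\ell$ windows). I will do this by writing the CDFL formula $Y_{i,s}\mapsto$ the column $[a,a+k]\setminus\{a+i\}$, with $a$ linear in $(s-i)/2$, and checking the endpoints.
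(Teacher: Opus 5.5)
Your proposal is correct and matches the paper's argument. The paper likewise composes $\Psi$ from Lemma~\ref{lem:Pepsilon_xi is isomorphic to Pell_minus_one} with \cite[Theorem 3.17]{CDFL20}, which gives $\mathcal{P}^+_{\ell-1}\cong\SSYT(k,[k+\ell],\sim)$, and it also cites Lemma~\ref{lem:quotient of SSYTkZ is isomorphic to SSYTk_ell}, which, as you note, is not needed for the bijection itself. Your extra bookkeeping via \cite[Lemma 3.13]{CDFL20} just re-derives that theorem; one small correction is that in the convention of the paper's explicit tableaux $T_{i,s}$ the removed entry is $a+k-i$, not $a+i$, which does not affect your count.
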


In the following, we describe the isomorphism $\Phi: \mathcal{P}^+_{\varepsilon,\xi} \to \SSYT(k, [k+\ell], \sim)$ in Theorem \ref{thm: isomorphic tableaux} explicitly. 

For $k \in \mathbb{Z}_{\ge 2}$, $\ell \in \mathbb{Z}_{\ge 2}$, and $x \in \mathbb{Z}$, denote by $\bar{x}$ the representative of $x \bmod \ell$ taken in $[1,\ell]$. For $i \in I$, $s \in [0,2\ell-1]$, we denote $T_{i,s}$ by the one column tableau whose entries are $[\bar{x}, \bar{x}+k] \setminus \{\bar{x}+k-i\}$, where $x=\frac{i-s-1}{2}$. The one-column tableau $T_{i,s}$ is a fundamental tableau. 

Every element in $\mathcal{P}^+_{\varepsilon,\xi}$ is of the form $m = \prod_{i,s} Y_{i,s}^{a_{i,s}}$ for some $a_{i,s} \ge 0$. The isomorphism $\Phi: \mathcal{P}^+_{\varepsilon,\xi} \to \SSYT(k, [k+\ell], \sim)$ sends $m = \prod_{i,s} Y_{i,s}^{a_{i,s}}$ to $\bigcup_{i,s} T_{i,s}^{\cup a_{i,s}}$. 

On the other hand, every tableau in $\SSYT(k, [k+\ell], \sim)$ admits a unique factorization as a $\cup$-product of fundamental tableaux $T_{i,s}$, each appearing with multiplicity $a_{i,s} \ge 0$. The inverse map $\Phi^{-1}$ then sends $T$ to $\prod_{i,s} Y_{i,s}^{a_{i,s}}$.

In Table \ref{table:correspondence between tableaux and simple modules Gr36}, we give the correspondence between one-column tableaux in $\SSYT(3, [6], \sim)$ and dominant monomials.

\begin{table}[H]
\begin{equation*}
\hspace{2cm}
\begin{minipage}{0.6\textwidth}
\begin{tabular}{|c|c|}
\hline
dominant monomials        & tableaux  \\
\hline
 $L(Y_{1,0})$        & $[3, 4, 6]$   \\
\hline 
 $L(Y_{1,2})$        & $[2, 3, 5]$   \\
\hline
 $L(Y_{1,4})$        & $[1, 2, 4]$   \\
\hline
 $L(Y_{2,1})$        & $[3, 5, 6]$   \\
\hline
 $L(Y_{2,3})$        & $[2, 4, 5]$  \\
\hline
 $L(Y_{2,5})$        & $[1, 3, 4]$   \\
\hline
 $L(Y_{1,2}Y_{1,4})$     & $[1, 2, 5]$  \\
\hline
 $L(Y_{1,0}Y_{1,2}Y_{1,4})$  & $[1, 2, 6]$  \\
\hline
\end{tabular}
\end{minipage}
\hspace{-3cm}
\begin{minipage}{0.8\textwidth}
\begin{tabular}{|c|c|}
\hline
dominant monomials        & tableaux  \\
\hline
 $L(Y_{1,2}Y_{2,5})$       & $[1, 3, 5]$  \\ 
 \hline
 $L(Y_{1,0}Y_{1,2}Y_{2,5})$    & $[1, 3, 6]$  \\ 
\hline
$L(Y_{2,5}Y_{2,3})$        & $[1, 4, 5]$  \\
 \hline
 $L(Y_{1,0}Y_{2,3}Y_{2,5})$    & $[1, 4, 6]$  \\
 \hline
 $L(Y_{2,1}Y_{2,3}Y_{2,5})$    & $[1, 5, 6]$  \\
\hline
 $L(Y_{1,0}Y_{1,2})$       & $[2, 3, 6]$  \\
\hline
 $L(Y_{1,0}Y_{2,3})$       & $[2, 4, 6]$   \\
\hline
 $L(Y_{2,1}Y_{2,3})$       & $[2, 5, 6]$  \\
\hline
\end{tabular}
\end{minipage}
\end{equation*}
\caption{Correspondence between one-column tableaux in ${\rm SSYT}(3,[6], \sim)$ and dominant monomials in $\mathcal{P}^+_{\varepsilon, \xi}$, $\varepsilon^ {2\ell}=1$, $\ell=3$. Here we write a one-column tableau as a row to save space. For example, $[3,4,6]$ is the one-column tableau with entries $3,4,6$.} \label{table:correspondence between tableaux and simple modules Gr36}
\end{table}

\begin{example}
When computing the monomial corresponding to a given tableau, we first decompose the tableau into a union of fundamental tableaux. Then we send each fundamental tableau to the corresponding $Y_{i,s}$. For example, for $k=3, \ell=3$, we have that 
\begin{align*}
\vcenter{\hbox{
\begin{ytableau}
1 & 2 \\
3 & 4 \\
5 & 6
\end{ytableau}
}}
\,\sim \,
\vcenter{\hbox{
\begin{ytableau}
1 & 2 & 2 & 3   \\
3 & 3 & 4 & 4 \\
4 & 5 & 5 & 6
\end{ytableau}
}}.
\end{align*}
Each column of the tableau on the right-hand side corresponds to one $Y_{i,s}$, and the tableau 
$\vcenter{\hbox{\scalebox{0.6}{$\begin{ytableau}
1 & 2 \\
3 & 4 \\
5 & 6
\end{ytableau}$}}}$
corresponds to the dominant monomial $Y_{2,5}Y_{1,2}Y_{2,3}Y_{1,0}$.
\end{example}

Simple modules in the category $\mathcal{C}_{\varepsilon, \xi}$ are parametrized by dominant monomials in $\mathcal{P}^+_{\varepsilon,\xi}$. Following Theorem \ref{thm: isomorphic tableaux}, we obtain that simple modules in $\mathcal{C}_{\varepsilon, \xi}$ are also parametrized by semistandard tableaux in $\SSYT(k,[k+\ell],\sim)$. Pl\"{u}cker coordinates are one-column tableaux. In particular, we have correspondence between Pl\"{u}cker coordinates and dominant monomials. This will be used in the next subsection to rewrite Fraser's conjecture.

\subsection{Cyclic symmetry loci in Grassmannians, Fraser's conjecture, and Gleitz's conjecture}

In this subsection, we recall Fraser's conjecture. As mentioned in the last sentence of Section 8 in \cite{Fra20}, the integer ``$n$'' in $\Gr(k,n)$ plays no role in Fraser's Conjecture 8.8 in \cite{Fra20}, we present Fraser's conjecture using the infinite Grassmannian $\Gr(k, \infty):= \varinjlim_{n} \Gr(k,n)$, where the limit is taken with respect to the natural inclusions
\[
\Gr(k,n) \hookrightarrow \Gr(k,n+1),  \quad V \mapsto V \oplus 0,
\]
see \cite{Sato89, SW85}. Equivalently, $\Gr(k,\infty)$ parametrizes $k$-dimensional subspaces of
\[
\CC^\infty := \bigoplus_{i\in \ZZ} \CC e_i,
\]
spanned by vectors of finite support, i.e., vectors $v = \sum_{i\in \ZZ} a_i e_i$ with $a_i=0$ for all but finitely many $i$.
The infinite Grassmannian $\Gr(k,\infty)$ admits Plücker coordinates indexed by $k$-element subsets $I \subset \mathbb{Z}$ with finite support:
\[
X \longmapsto [P_I(X)] \in \mathbb{P}(\Lambda^k \CC^\infty).
\]
Denote by $\boldsymbol{\rho}$ the linear operator
\[
\boldsymbol{\rho} : \CC^\infty \to \CC^\infty, \quad \boldsymbol{\rho}(e_i) = e_{i+1}.
\]
For $\ell \ge 1$, we write $\boldsymbol{\rho}^\ell(e_i) = e_{i+\ell}$.
This induces an action of $\boldsymbol{\rho}^\ell$ on $\Gr(k,\infty)$ by
\[
\boldsymbol{\rho}^\ell(X) := \{ \boldsymbol{\rho}^\ell(v) \mid v \in X \}, \quad X \in \Gr(k,\infty).
\]
In Plücker coordinates, this action is given by $P_I(\boldsymbol{\rho}^\ell X) = P_{I+\ell}(X)$,
where for a $k$-subset $I=\{i_1<\cdots<i_k\}$ we set $I+\ell := \{i_1+\ell,\dots,i_k+\ell\}$. We say that $X \in \Gr(k,\infty)$ is $\ell$-fixed if $\boldsymbol{\rho}^\ell(X) = X$. 
Denote the $\ell$-fixed locus of $\Gr(k,\infty)$ by
\[
\Gr(k,\infty)^{\boldsymbol{\rho}^\ell} := \left\{ X \in \Gr(k,\infty) \;\middle|\; \boldsymbol{\rho}^\ell(X)=X \right\}.
\]
Equivalently, in Plücker coordinates, the condition $\boldsymbol{\rho}^\ell(X) = X$ is
\[
[P_{I+\ell}(X)] = [P_I(X)]
\quad \text{in } \mathbb{P}(\Lambda^k \mathbb{C}^\infty),
\]
which, by homogeneity, is equivalent to the existence of $\zeta \in \mathbb{C}^{\times}$ such that
\[
P_{I+\ell}(X) = \zeta\, P_I(X) \quad \text{for all } I \in \binom{\mathbb{Z}}{k} = \{I \subset \ZZ: |I|=k\}.
\]
Thus $\Gr(k,\infty)^{\boldsymbol{\rho}^\ell}$ decomposes into components indexed by $\zeta \in \mathbb{C}^{\times}$:
\[
\Gr(k,\infty)^{\boldsymbol{\rho}^\ell} =\bigsqcup_{\zeta \in \mathbb{C}^{\times}}\Gr(k,\infty)^{\boldsymbol{\rho}^\ell,\zeta},
\]
where
\[
\Gr(k,\infty)^{\boldsymbol{\rho}^\ell,\zeta}:=\left\{X \in \Gr(k,\infty)\;\middle|\;P_{I+\ell}(X) = \zeta\, P_I(X)\text{ for all } I\right\}.
\]
Denote by $\mathcal{D}(k,\ell):=\Gr(k,\infty)^{\boldsymbol{\rho}^\ell, 1}$ the component corresponding to $\zeta=1$. Equivalently,
\[
\mathcal{D}(k,\ell)=\left\{X \in \Gr(k,\infty)\;\middle|\;P_{I+\ell}(X) = P_I(X)\text{ for all } I\right\}.
\]
Its coordinate ring is defined as the quotient of the polynomial ring generated by all Plücker coordinates $P_I$, where $I \subset \mathbb{Z}$ and $|I|=k$, by the ideal generated by the Plücker relations together with the periodicity relations $P_{I+\ell} = P_I$ for all $I$. We define $\mathbb{C}[\mathcal{D}(k,\ell,\sim)]$ to be the quotient of $\mathbb{C}[\mathcal{D}(k,\ell)]$ by the ideal generated by $P_{i,i+1,\ldots,i+k-1}-1$, $i \in \ZZ$.

Fraser constructed a generalized cluster algebra structure on $\mathbb{C}[\mathcal{D}(k, \ell)]$ which induces a natural generalized cluster algebra structure on $\mathbb{C}[\mathcal{D}(k, \ell, \sim)]$. As mentioned in the last sentence of Section 8 in \cite{Fra20}, ``$n$'' in $\Gr(k,n)$ plays no role in Conjecture 8.8 in \cite{Fra20}, we modify Definition 7.1 in \cite{Fra20} accordingly in the following. 
\begin{definition}[{\cite[Definition 7.1]{Fra20}}] \label{defn:initPluckers}
Given $k, \ell\geq 2$, set $N := (k-1)(\ell-1)$. Define a sequence $(I_i)_{i=1}^N$ as follows. 
For $j \in [k-1]$ and $a \in [\ell-1]$, set $i = (j-1)(\ell-1) + a$ and define 
\begin{equation}\label{eq:Ilist}
I_i = \{i, i+1, \dots, i+j-1\} \cup \{1+j\ell,1+(j+1)\ell,\dots,1+(k-1)\ell\}.
\end{equation}
In other words, $I_i = \{i,\ell+1,2\ell+1,\dots,(k-1)\ell+1\}$ for $i \in [\ell-1]$, $I_i = \{i,i+1,2\ell+1,\dots,(k-1)\ell+1\}$ for $i \in [\ell,2(\ell-1)]$, and so on. 
Define also $I_{N+t} = [N+t,N+t+k-1] $ for $t \in [\ell]$. 
\end{definition}

\begin{definition}[{\cite[Definition 7.2]{Fra20}}]\label{defn:abstract CS seed}
For arbitrary $k,\ell \geq 2$ and $N = (k-1)(\ell-1)$, the generalized seed $(\widetilde{\bf x},\tilde{B},{\bf z})$ is defined as follows. The initial cluster $\widetilde{\bf x} = (x_i)_{1 \le i \le N+\ell}$ consists of independent indeterminates, where the last $\ell$ variables $x_{N+1},\dots,x_{N+\ell}$ are frozen. The extended exchange matrix $\tilde{B}$ is defined by an extended quiver with arrows $x_{i+1} \to x_i \to x_{i+\ell} \to x_{i+1}$ for $i=1,\ldots,N$. The exchange degrees are $d_1 = k$ and $d_i = 1$ for $i \in [2,N]$. The coefficient string variables are abbreviated by setting $z_s := z_{1;s}$. Denote the resulting generalized cluster algebra by $\mathcal{A}_{\rm cyc}(k,\ell)$. 
\end{definition}

Let $q$ be an indeterminate. For $k,m \in \mathbb{N}$, set 
\begin{equation*}
[k]_q = \frac{q^{\frac{k}{2}}-q^{-\frac{k}{2}}}{q^{\frac{1}{2}}-q^{-\frac{1}{2}}}, \hspace{.6cm}  [m]!_q = \prod_{k=1}^m [k]_q, \hspace{.6cm} 
\genfrac{[}{]}{0pt}{}{m}{k}_q = \frac{[m]!_q}{[k]!_q [m-k]!_q}, 
\end{equation*}
the $q$-analogs of $k \in \mathbb{N}$, $m!$, and $\binom {m}{k}$, respectively. Let $p \ge k$ be an integer. Define
\begin{equation}\label{eq:coeffs}
\eta_{s} := \genfrac{[}{]}{0pt}{}{k}{s}_q \text{ evaluated at } q = e^{\frac{2\pi i}{p}}.
\end{equation}
Denote by $\mathcal{A}(k,\ell)$ the subalgebra of $ \mathbb{C}(\mathcal{D}(k,\ell))$ obtained by identifying  $x_i \in \mathcal{A}_{\rm cyc}(k,\ell)$ with $P_{I_i}\in\mathbb{C}[\mathcal{D}(k,\ell)]$, and by specializing $z_s \mapsto \eta_s$.

Let $\mathcal{A}= \smash{\mathcal{A}\big(\widetilde{\mathbf{x}}, \rho, \widetilde{B}\big)}$ be a generalized cluster algebra. Its \emph{upper generalized cluster algebra} \cite{GSV18} is defined by
\begin{align}\label{Eq: the definition of upper generalized cluster algebra}
\mathcal{A}^{\mathrm{up}} =\bigcap_{t} \mathbb{Z}\mathbb{P}\big[\mathbf{x}_t^{\pm1}\big],
\end{align}
where the intersection is taken over all seeds $t$ that are mutation-equivalent to 
$\big(\widetilde{\mathbf{x}}, \rho, \widetilde{B}\big)$, and $\mathbf{x}_t$ denotes the cluster at $t$.

Fraser conjectured that $\mathcal{A}(k,\ell)$ is strictly contained in $\mathbb{C}[\mathcal{D}(k, \ell)]$ and $\mathbb{C}[\mathcal{D}(k, \ell)]$ is equal to the upper cluster algebra $\mathcal{A}^{\rm up}(k,\ell)$ of $\mathcal{A}(k,\ell)$, see Conjecture 7.6 and the discussion below it in \cite{Fra20}.

We rewrite Fraser's Conjecture 8.8 in \cite{Fra20} in the language of quantum affine algebras as follows. 
\begin{conjecture}[{\cite[Conjecture 8.8]{Fra20}}]\label{Conj: rewrite Fraser's conjecture}
The Grothendieck ring $K_0(\mathcal{C}_{\varepsilon, \xi})$ admits an upper generalized cluster algebra structure in which each cluster monomial is the class of a simple module. The exchange degrees and the initial quiver of an initial seed of this generalized cluster algebra are  given in Definition \ref{defn:abstract CS seed}. The initial cluster in this initial seed is
\[
\mathbf{x}=\{x_1,x_2,\ldots,x_{(k-1)(\ell-1)},\,x_{(k-1)(\ell-1)+1},\ldots,x_{(k-1)(\ell-1)+\ell}\},
\]
where the variables $x_i$ for $i \in [1,(k-1)(\ell-1)+\ell]$ are given as follows. For each $i \in [1,(k-1)(\ell-1)]$, the dominant monomial of $x_i$ is expressed as a product of factors in lexicographic order. Explicitly, 
\begin{align*}
x_1 &=[ L(Y_{1,2k-2} Y_{1,2k} \cdots Y_{1,2k+2\ell-6} \, 
Y_{2,2k+2\ell-3} Y_{2,2k+2\ell-1} \cdots Y_{2,2k+4\ell-7}\times \\
&\quad \times Y_{3,2k+4\ell-4} \cdots Y_{3,2k+6\ell-8} \cdots 
Y_{k-1,2k+2(k-2)\ell-k} \cdots Y_{k-1,2k+2(k-1)\ell-(k+4)})],
\end{align*}
and for each $i \in [1,(k-1)(\ell-1)]$, $x_{i+1}$ is obtained by removing the last factor of the dominant monomial of $x_i$.  
Finally, the remaining elements are frozen variables:
\[
x_{\,(k-1)(\ell-1)+1} = \cdots = x_{\,(k-1)(\ell-1)+\ell} = 1.
\]
\end{conjecture}

According to Conjectures 7.6 and 8.8 in \cite{Fra20}, we expect that there is an isomorphism $\widetilde{\Phi}: K_0(\mathcal{C}_{\varepsilon, \xi}) \to \mathbb{C}[\mathcal{D}(k,\ell,\sim)]$ which satisfies: $\chi_{\varepsilon}(L(Y_{i,s}))$ is sent to the Pl\"{u}cker coordinate which has indices that are the entries of the one column tableau $T_{i,s}$.

\begin{example} 
Let $\varepsilon^{2 \ell}=1$ with $\ell =3$. An initial cluster for the generalized cluster algebra $\mathcal{K}_0(\Rep U_{\varepsilon}^{\res}({L\mathfrak{sl}_{4}}))$ is as follows. The initial cluster is $\mathbf{x}=\{x_1, x_2, \ldots, x_9\}$, where the cluster variables are given by
\begin{align*}
&x_1=[L(Y_{1,0} Y_{1,2} Y_{2,5}Y_{2,7} Y_{3,10} Y_{3,12})],\, x_2=[L(Y_{1,0} Y_{1,2} Y_{2,5}Y_{2,7} Y_{3,10})], x_3=[L(Y_{1,0} Y_{1,2} Y_{2,5}Y_{2,7})],\\
&x_4=[L(Y_{1,0} Y_{1,2} Y_{2,5})],\, x_5=[L(Y_{1,0} Y_{1,2})],\,x_6=[L(Y_{1,0})], \quad x_7=x_8=x_9=1.
\end{align*}
The extended exchange matrix is
\begin{equation}\label{extend exchhange matrix}
\tilde{B}=
\left(
  \begin{array}{cccccc}
    0 & -1 & 0 & 1 & 0 & 0 \\
    1 & 0 & -1 & -1 & 1 & 0 \\
    0 & 1 & 0 & -1 & -1 & 1\\
    -1 & 1 & 1 & 0 & -1 & -1 \\
    0 & -1 & 1 & 1 & 0 & -1\\
    0 & 0 & -1 & 1 & 1 & 0 \\
    0 & 0 & 0 & -1 & 1 & 1\\
    0 & 0 & 0 & 0 & -1 & 1 \\
    0 & 0 & 0 & 0 & 0 & -1\\
  \end{array}
\right).
\end{equation}
The exchange degrees are $d_1 = 4$ and $d_i = 1$ for $i \in[2,6]$. The initial quiver associated with $\tilde{B}$ and exchange degrees are as follows:
\\
{\small
\begin{tikzcd}
	& \begin{array}{c} x_4\\v_4 \end{array} & \begin{array}{c} x_5\\v_5 \end{array} & \begin{array}{c} x_6\\v_6 \end{array} & \begin{array}{c} x_7=1\\v_7 \end{array} & \begin{array}{c} x_8=1\\v_8 \end{array} & \begin{array}{c} x_9=1\\v_9 \end{array} \\
	\begin{array}{c} v_1\\x_1\\d_1=4 \end{array} & \begin{array}{c} v_2\\x_2\\d_2=1 \end{array} & \begin{array}{c} v_3\\x_3\\d_3=1 \end{array} & \begin{array}{c} v_4\\x_4\\d_4=1 \end{array} & \begin{array}{c} v_5\\x_5\\d_5=1 \end{array} & \begin{array}{c} v_6\\x_6\\d_6=1 \end{array} & \begin{array}{c} v_7\\x_7\\\quad \end{array}
	\arrow[from=1-2, to=2-2]
	\arrow[from=1-3, to=2-3]
	\arrow[from=1-4, to=2-4]
	\arrow[from=1-5, to=2-5]
	\arrow[from=1-6, to=2-6]
	\arrow[from=1-7, to=2-7]
	\arrow[from=2-1, to=1-2]
	\arrow[from=2-2, to=1-3]
	\arrow[shift right=5, from=2-2, to=2-1]
	\arrow[from=2-3, to=1-4]
	\arrow[shift right=5, from=2-3, to=2-2]
	\arrow[from=2-4, to=1-5]
	\arrow[shift right=5, from=2-4, to=2-3]
	\arrow[from=2-5, to=1-6]
	\arrow[shift right=5, from=2-5, to=2-4]
	\arrow[from=2-6, to=1-7]
	\arrow[shift right=5, from=2-6, to=2-5]
	\arrow[shift right=5, from=2-7, to=2-6]
\end{tikzcd}}
\end{example}

For $\mathcal{C}_{\varepsilon, \xi}$, in the $k=2$, $\ell\in \mathbb{Z}_{\geq 2}$ case, and in the $k=3$ and $\ell=2$ case, Gleitz showed that the Grothendieck ring $K_0(\mathcal{C}_{\varepsilon, \xi})$ is a generalized cluster algebra structure of finite Dynkin types $C_{\ell-1}$ and $G_2$, respectively \cite[Theorem 4.1 and 5.3]{Gle16}. Moreover, for $k=3$, Gleitz made the following conjecture.

\begin{conjecture}[{\cite[Conjecture 5.4]{Gle16}}]
For $l\geq 2$, the Grothendieck ring of $\mathcal{C}_{\varepsilon, \xi}$ is isomorphic to a generalized cluster algebra of rank $2\ell-2$. Moreover, the generalized cluster monomials are mapped to classes of simple modules.
\end{conjecture}

This is a special case of Fraser's conjecture, see also \cite [Section 8.2]{Fra20}.

\section{Main results}\label{Section: main result}
In this section, we present the main results of this paper, including a classification of real Kirillov--Reshetikhin modules of $U_\varepsilon^{\res}(L\mathfrak{sl}_3)$, mutation sequences of real Kirillov--Reshetikhin modules of $U_\varepsilon^\res({L\mathfrak{sl}_{3}})$, and a proof of the first part of Gleitz's conjecture.

\subsection{\texorpdfstring{A classification of real Kirillov--Reshetikhin modules of $U_\varepsilon^\res({L\mathfrak{sl}_{3}})$}{a classification of real Kirillov--Reshetikhin module}}

\begin{lemma}\label{Lem: cofficients of $A_2$ are not real}
Let $\varepsilon^{2 \ell}=1$ with $\ell\geq 2$. The $U_\varepsilon^\res({L\mathfrak{sl}_{3}})$-module $L((X^{(\ell)}_{i, k})^c)$ is not real, where $i\in\{1,2\}$, $(i,k)\in \mathcal{X}$, and $c\in\ZZ_{\geq 1}$.
\end{lemma}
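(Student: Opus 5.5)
The plan is to reduce the statement to ordinary representation theory of $\mathfrak{sl}_3$ via the Frobenius pullback in Theorem \ref{Th:decomposition}. Write $m := (X^{(\ell)}_{i,k})^c$. By (\ref{Eq:KR module}), $X^{(\ell)}_{i,k}=Y_{i,k}Y_{i,k+2}\cdots Y_{i,k+2\ell-2}$. This is exactly the product of $Y_{i,a\varepsilon^{s}}$ over one full period of residues $s\equiv k \pmod 2$, with $s$ running over $\ell$ consecutive such values. Hence $X^{(\ell)}_{i,k}=\mathbf{Y}_{i,b}$ for a suitable $b\in\CC^\times$, by (\ref{mathbf{Y}}) and $\varepsilon^{2\ell}=1$.

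Therefore $m=\mathbf{Y}_{i,b}^c$. In the decomposition $m=m^0m^1$ of Theorem \ref{Th:decomposition} we get $m^0=1$ and $m^1=m$. So $L(m)$ is the Frobenius pullback $\mathrm{Fr}^*(L(\widetilde m))$ with $\widetilde m=Y_{i,b^\ell}^c$. Since $\varepsilon^*=1$, the category of $U^{\res}_{\varepsilon^*}(L\mathfrak{sl}_3)$-modules is the category of finite-dimensional $L\mathfrak{sl}_3$-modules. The simple module with Drinfeld data $Y_{i,b^\ell}^c$ is the evaluation module $V_{c\omega_i}(b^\ell)$; its character matches by \cite[Lemma 5.8]{FM02}.

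Next I use that $\mathrm{Fr}^*$ is pullback along an algebra map compatible with the coproduct, hence a monoidal functor. This gives
\[
L(m)\otimes L(m)\cong \mathrm{Fr}^*\big(V_{c\omega_i}(b^\ell)\otimes V_{c\omega_i}(b^\ell)\big).
\]
Both factors are evaluated at the same point, so the tensor product is the pullback under $\Phi_{b^\ell}$ of the $\mathfrak{sl}_3$-module $V_{c\omega_i}\otimes V_{c\omega_i}$. This $\mathfrak{sl}_3$-module decomposes as a direct sum with at least two nonzero summands, for instance $V_{2c\omega_i}\oplus V_{2c\omega_i-\alpha_i}\oplus\cdots$; note that $2c\omega_i-\alpha_i$ is dominant for $c\ge1$ (for $i=1$ it equals $(2c-2)\omega_1+\omega_2$). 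Pullbacks of direct sums are direct sums of nonzero modules, so $L(m)\otimes L(m)$ is not simple, i.e.\ $L(m)$ is not real.

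Alternatively, and as a consistency check, I can argue with characters. Since $\chi_\varepsilon$ is an injective ring homomorphism, it suffices that $\chi_\varepsilon(L(m))^2$ contains at least two dominant monomials $\mathbf{Y}_{i,b}^{2c}$ and $\mathbf{Y}_{i,b}^{2c}\mathbf{A}^{-1}$ whose multiplicities do not fit a single simple character; this follows from the same $\mathfrak{sl}_3$ computation after substituting $y_j\mapsto \mathbf{Y}_{j,\cdot}$.

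The main point needing care is the first step: matching indices so that $X^{(\ell)}_{i,k}$ is exactly one $\mathbf{Y}_{i,b}$, with the parity convention $\xi_i$ and the shift $a\varepsilon^{\xi_i}$ noted after Theorem \ref{Th:decomposition}. The remaining steps are standard.
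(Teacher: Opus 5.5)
Your proposal is correct and follows essentially the paper's route. Like the paper, you identify $X^{(\ell)}_{i,k}$ with a single $\mathbf{Y}_{i,b}$, so that $L((X^{(\ell)}_{i,k})^c)$ is the Frobenius pullback of the evaluation module $V_{c\omega_i}(b^\ell)$, and then reduce to $\mathfrak{sl}_3$. The only difference is the last step: the paper compares dimensions, $\dim(V_{c\omega_1})^2=\frac{(c+1)^2(c+2)^2}{4}\neq(2c+1)(c+1)=\dim V_{2c\omega_1}$, which needs only that $\chi_\varepsilon$ is a ring homomorphism. Your exhibition of the extra summand $V_{(2c-2)\omega_1+\omega_2}$ instead uses that $\mathrm{Fr}^*$ is monoidal; this holds because the quantum Frobenius map is a Hopf algebra map, and your character-based variant avoids the point altogether.
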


\begin{proof}
By the definition of $\mathcal{X}$ in Formula (\ref{Eq:X}), $(i,k)\in \mathcal{X}$ implies that $i-k \equiv 1 \pmod{2}$. By the definitions of $X^{(s)}_{i,k}$ in Equation (\ref{Eq:KR module}) and ${\bf Y}_{i, a}$ in Equation (\ref{mathbf{Y}}), $X^{(\ell)}_{i, k} = Y_{i,k} \cdots Y_{i,k+2\ell-2} = Y_{i, a\varepsilon^k} Y_{i, a\varepsilon^{k+2}} \cdots Y_{i, a\varepsilon^{k+2\ell-2}} = {\bf Y}_{i, a}$, where $a \in \mathbb{C}^{\times}$ is fixed in the beginning. Since $\varepsilon^{2\ell}=1$, $X^{(\ell)}_{i, k} = X^{(\ell)}_{i, k'}$ for any $(i,k)$, $(i,k') \in \mathcal{X}$. We prove the lemma in the case $i=1$; the remaining cases are analogous. By the definition of a real module, it suffices to show that
\begin{align}\label{inequalty}
\chi_{\varepsilon}(L((X^{(\ell)}_{1, k})^{2c}))\neq  \chi_{\varepsilon}(L((X^{(\ell)}_{1, k})^c))^2.
\end{align}
Recall that for a simple Lie algebra $\mathfrak{g}$ and a dominant weight $\mu$, we denote by $V_{\mu}$ the irreducible highest weight $\mathfrak{g}$-module of highest weight $\mu$, and denote by $\chi(V_{\mu})$ its ordinary character. According to Theorem \ref{Th:decomposition} and the discussion below it, the $\varepsilon$-character of $L((X^{(\ell)}_{1, k})^{b})$ for $b \in \ZZ_{\ge 1}$ can be computed as follows: compute the character of $V_{b\omega_1}$ using the Weyl character formula, and replace $y_i$ by ${\bf Y}_{i,a}$.
It follows that
\[
\dim(V_{a\omega_1})=\dim(L((X^{(\ell)}_{1, k})^c))),\,\, \dim(V_{2c\omega_1})=\dim(L((X^{(\ell)}_{1, k})^{2c})).
\]
By Weyl's dimension formula (see \cite[Section VI]{Hu1978}), for any $b\in\ZZ_{\geq 1}$, we have that $\dim(V_{b\omega_1})=\frac{(b+1)(b+2)}{2}$. Thus,
\begin{align*}
\dim(V_{c\omega_1})^2&=\big(\frac{(c+1)(c+2)}{2}\big)^2=\frac{(c+1)^2(c+2)^2}{4},\\
\dim(V_{2c\omega_1})&=\frac{(2c+1)(2c+2)}{2}=(2c+1)(c+1).
\end{align*}
Since $\dim(V_{c\omega_1})^2\neq \dim(V_{2c\omega_1})$, it follows that $\dim(L((X^{(\ell)}_{1, k})^c))^2\neq \dim(L((X^{(\ell)}_{1, k})^{2c}))$. Thus, we conclude that the inequality (\ref{inequalty}) is true, verifying our assertion.
\end{proof}

Furthermore, we have the following conclusion.
\begin{theorem}\label{Th: real KR module}
Let $\varepsilon^{2 \ell}=1$ with $\ell\geq 2$, $i \in \{1,2\}$, $(i,k)\in \mathcal{X}$, and $t \in \ZZ_{\ge 1}$. Then the Kirillov--Reshetikhin module $L(X^{(t)}_{i,k})$ of $U_\varepsilon^\res({L\mathfrak{sl}_{3}})$ is real if and only if $t\in[1,\ell-1]$. 
\end{theorem}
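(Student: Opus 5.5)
We prove the two directions separately. The ``only if'' part reduces to Lemma~\ref{Lem: cofficients of $A_2$ are not real} through Theorem~\ref{Th:decomposition}. The ``if'' part needs a dominant-monomial analysis of the square of the $\varepsilon$-character.

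\emph{Non-reality for $t\ge \ell$.} Write $t=c\ell+r$ with $c\ge 1$ and $r\in[0,\ell-1]$. Since $\varepsilon^{2\ell}=1$, any $\ell$ consecutive factors $Y_{i,s}Y_{i,s+2}\cdots Y_{i,s+2\ell-2}$ equal $\mathbf{Y}_{i,a}$. Hence
\[
m:=X^{(t)}_{i,k}=m^0 m^1,\qquad m^1=\mathbf{Y}_{i,a}^{\,c},\qquad m^0=X^{(r)}_{i,k'}
\]
for a suitable $k'$, with $m^0=1$ when $r=0$. Here $m^0$ is $\ell$-acyclic because it involves only $r<\ell$ of the $\ell$ residues. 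The monomial $(m^0)^2$ is also $\ell$-acyclic, since it is still supported on only $r$ residues. By Theorem~\ref{Th:decomposition},
\[
L(m)\cong L(m^0)\otimes L(m^1),\qquad L(m^2)\cong L((m^0)^2)\otimes L((m^1)^2).
\]
If $L(m)$ were real, then $L(m)\otimes L(m)\cong L(m^2)$, and taking dimensions would give
\[
\dim L(m^0)^2\,\dim L(m^1)^2=\dim L((m^0)^2)\,\dim L((m^1)^2).
\]
On the right, $\dim L((m^0)^2)\le \dim L(m^0)^2$, because $L((m^0)^2)$ is a subquotient of $L(m^0)^{\otimes 2}$. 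The computation in the proof of Lemma~\ref{Lem: cofficients of $A_2$ are not real} gives $\dim V_{2c\omega_i}<\dim V_{c\omega_i}^2$, that is, $\dim L((m^1)^2)<\dim L(m^1)^2$. So the right-hand side is strictly smaller than the left, a contradiction. Hence $L(X^{(t)}_{i,k})$ is not real for $t\ge\ell$.

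\emph{Reality for $t\in[1,\ell-1]$.} It suffices to show that $\chi_\varepsilon(L(X^{(t)}_{i,k}))^2$ contains exactly one dominant monomial, namely $(X^{(t)}_{i,k})^2$. Every composition factor of $L(X)\otimes L(X)$ contributes its dominant highest monomial to this product, so uniqueness forces $L(X)\otimes L(X)\cong L(X^2)$.

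For this I will use the path description of $\chi_\varepsilon(L(X^{(t)}_{i,t_0}))$ from \cite[Theorem 5.2]{ALLZ25}, which applies since $t\le \ell$. It expresses the character as a sum over tuples of pairwise disjoint paths on the tube of circumference $2\ell$. For $\mathfrak{sl}_3$ the paths are very short, having only two interior columns. So a non-highest tuple $(p_1,\dots,p_t)$ has lower corners $Y^{-1}_{j,s}$ lying in a window of spectral parameters of length about $2t+2\le 2\ell$ just below the highest tuple. I will show that in a product $\mathfrak m(\mathbf p)\,\mathfrak m(\mathbf p')$ with at least one factor non-highest, some lower corner survives, for instance the lowest corner of the last path that is moved. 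It cannot be cancelled by an upper corner of the other tuple. In the generic (non-cyclic) situation this is the standard argument that KR modules are special and that $\chi_q(\mathrm{KR})^2$ has a unique dominant monomial. The hypothesis $t<\ell$ is exactly what prevents the wrap-around $s\sim s+2\ell$ from bringing a lower corner of one tuple onto an upper corner of the other. For $t=\ell$ this wrap-around does occur, consistent with the failure established above.

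\emph{Main obstacle.} The hard step is the bookkeeping in the reality direction, namely the cancellation analysis on the tube. I would first treat the tuples in which only the last path $p_t$ (resp.\ $p'_t$) is lowered, and then argue by induction on the number of moved paths, using the non-overlapping (disjointness) condition to locate an uncancelled $Y^{-1}$. A fallback would be to compare with the generic-$q$ $q$-character via \cite[Proposition 2.5]{FM02}. Here one would show that for $t<\ell$ the specialization $\chi_q(L(X^2))|_{q=\varepsilon}$ has no dominant monomials besides $X^2$. That would give $\chi_\varepsilon(L(X^2))=\chi_q(L(X))^2|_{q=\varepsilon}=\chi_\varepsilon(L(X))^2$, hence reality.
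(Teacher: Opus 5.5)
Your proposal is correct and follows essentially the paper's route. For $t\ge\ell$ you use the factorization of Theorem~\ref{Th:decomposition} together with $\dim V_{2c\omega_i}<(\dim V_{c\omega_i})^2$, which is the paper's chain of character inequalities recast as a dimension count. For $t\le\ell-1$ you show that the highest monomial is the only dominant monomial in $\chi_\varepsilon(L(X^{(t)}_{i,k}))^2$; the paper phrases this through the snake module $L(Y_{1,0}\cdots Y_{1,2t-2}Y_{1,2\ell}\cdots Y_{1,2\ell+2t-2})$. Your planned cancellation argument does close. For $i=1$, $k=0$, every non-highest tuple has its last path moved, which contributes $Y^{-1}_{1,2t}$ or $Y^{-1}_{2,2t+1}$. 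Since $2t+1\le 2\ell-1$, neither factor can meet an upper corner of either tuple, because those only involve $Y_{1,s}$ with $s\in[0,2t-2]$ and $Y_{2,s}$ with $s\in[1,2t-1]$.
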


\begin{proof}
We will first demonstrate that the Kirillov--Reshetikhin module $L(X^{(t)}_{i,k})$ of $U_\varepsilon^\res({L\mathfrak{sl}_{3}})$ is real, where $i\in\{1,2\}$, $t\in[1,\ell-1]$, and $k\in \ZZ_{\geq 0}$. We will focus on the case $i=1$ and $k=0$; the proofs for the other cases will follow a similar approach.

According to the definition of a real module, it suffices to show that for $t\in[1,\ell-1]$,
\begin{align}\label{Eq:KR of $A_2$} 
\chi_\varepsilon(L(X^{(t)}_{1,0}))^2=\chi_\varepsilon(L((X^{(t)}_{1,0})^2)).
\end{align}
For the module $L((X^{(t)}_{1,0})^2)$, we apply the result from \cite[Theorem 1.1]{ALLZ25} to express it as the snake module $L(Y_{1,0}Y_{1,2}\cdots Y_{1,2t-2}Y_{1,2 \ell}Y_{1,2 \ell+2}\cdots Y_{1,2\ell+2t-2})$. On the one hand, the lowest path of $\mathscr{P}_{1,2t-2}$ and the highest path of $\mathscr{P}_{1,2\ell}$ are non-overlapping. On the other hand, for any monomial $m_1$ in $\chi_\varepsilon(L(Y_{1,0}Y_{1,2}\cdots Y_{1,2t-2}))$, there exists no monomial $m_2$ in $\chi_\varepsilon(L(Y_{1,2 \ell}Y_{1,2 \ell+2}\cdots Y_{1,2 \ell+2t-2}))$ such that $m_1m_2$ is a dominant monomial except the highest weight monomial. Therefore,  
\[
\chi_\varepsilon(L(Y_{1,0}\cdots Y_{1,2t-2}Y_{1,2 \ell}\cdots Y_{1,2\ell+2t-2}))= \chi_\varepsilon(L(Y_{1,0}\cdots Y_{1,2t-2}))\, \chi_\varepsilon(L(Y_{1,2 \ell}\cdots Y_{1,2\ell+2t-2})).
\]
Thus, Equation (\ref{Eq:KR of $A_2$}) holds.

For $t=\ell$, Lemma~\ref{Lem: cofficients of $A_2$ are not real} implies that the Kirillov--Reshetikhin module $L(X^{(\ell)}_{i,k})$ is not real. For $t>\ell$, we can write $t=c \ell+d$ for some $c\in \mathbb{Z}_{\geq 1}$, $d\in[0, \ell)$. By Theorem \ref{Th:decomposition}, we have
\[
\chi_{\varepsilon}(L(X^{(t)}_{i, k}))=\chi_{\varepsilon}(L((X^{(\ell)}_{i, k})^c)) \chi_{\varepsilon}(L(X^{(d)}_{i, k})).
\]
According to Lemma \ref{Lem: cofficients of $A_2$ are not real}, the module $L((X^{(\ell)}_{i, k})^c)$ is not real. Therefore $ \chi_\varepsilon(L((X^{(\ell)}_{i, k})^c))^2>\chi_\varepsilon(L((X^{(\ell)}_{i, k})^{2c})))$. Throughout, the notation
$\chi_\varepsilon(L(M))>\chi_\varepsilon(L(M'))$
means that the multiset of monomials appearing in
$\chi_\varepsilon(L(M'))$ is a proper submultiset of that appearing in
$\chi_\varepsilon(L(M))$, , while $\chi_\varepsilon(L(M))\ge \chi_\varepsilon(L(M'))$ means that every monomial of $\chi_\varepsilon(L(M'))$ appears in $\chi_\varepsilon(L(M))$ with at least the same multiplicity. It follows that
\begin{align*}
\chi_{\varepsilon}(L(X^{(t)}_{i, k}))^2 &= \chi_{\varepsilon}(L((X^{(\ell)}_{i, k})^c))^2\chi_{\varepsilon}(L(X^{(d)}_{i, k}))^2\\
&\ge \chi_{\varepsilon}(L((X^{(\ell)}_{i, k})^c))^2 \chi_{\varepsilon}(L((X^{(d)}_{i, k})^2))\\ 
&>\chi_{\varepsilon}(L((X^{(\ell)}_{i, k})^{2c})) \chi_{\varepsilon}(L((X^{(d)}_{i, k})^2))\\
&\ge \chi_{\varepsilon}(L((X^{(t)}_{i, k})^2))).
\end{align*}
Therefore, we conclude that the Kirillov--Reshetikhin module $L(X^{(t)}_{i, k})$ is not real for all $t \ge \ell$.
\end{proof}

\subsection{\texorpdfstring{Mutation equivalence of two seeds}{Mutation equivalence of cluster variables}}

To better describe mutation sequences and to visualize them more intuitively, and to make the exchange relations look more symmetric and neater, we introduce another initial seed of the generalized cluster algebra in Fraser’s conjecture (Conjecture \ref{Conj: rewrite Fraser's conjecture}). We will prove that this initial seed is mutation equivalent to the initial seed in Fraser’s conjecture (Conjecture \ref{Conj: rewrite Fraser's conjecture}).

\begin{definition}\label{Def:generalized cluster algebras of $A_2$}
Let $\mathcal{A}$ be a generalized cluster algebra of rank $2 \ell-2$ with initial seed $(\mathbf{x},\rho, B)$, defined as follows.
The initial cluster is 
\[
\mathbf{x} = \{x_1, x_2, \ldots, x_{2\ell-2}\},
\]
where the cluster variables are given by
\begin{align*}
x_1 =& [L(Y_{1,4}Y_{1,6}\cdots Y_{1, 2 \ell}Y_{2,2\ell+3}Y_{2,2\ell+5}\cdots Y_{2,4\ell -1})],\\
x_2 =& [L(Y_{2,2\ell+3}Y_{2,2\ell+5}\cdots Y_{2,4\ell -1})], \quad
x_3 = [L(Y_{2,2\ell+3}Y_{2,2\ell+5}\cdots Y_{2,4\ell -3})], \ \ldots, \ 
x_\ell = [L(Y_{2,2\ell+3})],\\
x_{\ell+1}& = [L(Y_{1,4}Y_{1,6}\cdots Y_{1, 2 \ell-2})], \qquad
x_{\ell+2} = [L(Y_{1,4}Y_{1,6}\cdots Y_{1, 2 \ell-4})], \,\quad\ldots, \,\quad
x_{2\ell-2} = [L(Y_{1,4})].
\end{align*}
The exchange degrees are $d_1=3$, $d_i=1$ for $i\in[2,2\ell-2]$. In addition, $\rho=\{\rho_1,\,\rho_2,\,\ldots,\,\rho_{2\ell-2}\}$, $\rho_1=(1,\,\rho_{1,1},\, \rho_{1,2},\, 1)$, $\rho_{1,1}=[L(Y_{1,0}Y_{1,2}\cdots Y_{1,2\ell-2})]$, $\rho_{1,2}=[L(Y_{2,1}Y_{2,3}\cdots Y_{2,2\ell-1})]$, and $\rho_i=(1,1)$ for $i\in[2, 2\ell-2]$. The exchange matrix $B$ is
\begin{equation}\label{exchhange matrix}
 B=
\bordermatrix{
&1  &2   &3   &4 &\dots &\ell & \ell+1 &&\dots &&& 2\ell-2\cr\\
1 &0&1&0&0&\dots&0&-1&0&0 &0 & \dots &0\cr\\
2 &-1&0&-1&0&\dots &0 &2&0& 0&0 & \dots &0\cr \\
3 &0&1&0&-1&\ddots& 0&-1&1&0  &0 & \dots &0 \cr     \\
4 &0&0&1&0&\ddots&0&0&-1&1&0 & \dots &0\cr            \\
\vdots&\vdots&\ddots&\ddots&\ddots&\ddots&\ddots&\ddots&\ddots&\ddots&\ddots&\ddots&\vdots\cr\\
&0&\dots&0&1&0&-1&0&0&\dots&0&-1&1\cr\\
\ell &0&\dots&0&0&1&0&0&0&\dots&0&0&-1\cr\\
\ell +1 &1&-2&1&0&\ddots&0&0&-1&0&\dots& \dots &0\cr\\
&0&0&-1&1 &\ddots&0&1&0&-1&0&\dots  &0\cr\\
\vdots &\vdots&\ddots&\ddots&\ddots&\ddots&\ddots&\ddots&\ddots& \ddots&\ddots&\ddots&\ddots\cr\\
&0&\dots &0&-1 &1&0&0&0&\dots&    1&0&-1\cr\\
2\ell -2 &0&\dots &0&0 &-1&1&0&0&\dots&    0&1&0\cr\\}.
\end{equation}
The initial quiver $Q$ associated with $B$ and exchange degrees is shown in Figure \ref{fig:initial quiver of A}. 
\begin{figure}[H]
\begin{center}
\begin{tikzcd}
	& \begin{array}{c} d_{\ell+1}=1\\x_{\ell+1}\\v_{\ell+1} \end{array} & \begin{array}{c} d_{\ell+2}=1\\x_{\ell+2}\\v_{\ell+2} \end{array} & \begin{array}{c} \quad\\\cdots\\\cdots \end{array} & \begin{array}{c} d_{2\ell-2}=1\\x_{2\ell-2}\\v_{2\ell-2} \end{array} & \\
	\begin{array}{c} v_1\\x_1\\d_1=3 \end{array} & \begin{array}{c} v_2\\x_2\\d_2=1 \end{array} & \begin{array}{c} v_3\\x_3\\d_3=1 \end{array} & \begin{array}{c} \cdots\\\cdots\\\quad \end{array} & \begin{array}{c} v_{\ell-1}\\x_{\ell-1}\\d_{\ell-1}=1 \end{array} & \begin{array}{c} v_\ell\\x_\ell\\d_\ell=1 \end{array}
	\arrow[from=1-2, to=2-1]
	\arrow[from=1-2, to=2-3]
	\arrow[shift left=5, from=1-3, to=1-2]
	\arrow[shift left, from=1-3, to=2-4]
	\arrow[shift left=5, from=1-4, to=1-3]
	\arrow[shift right, from=1-4, to=2-5]
	\arrow[shift left=5, from=1-5, to=1-4]
	\arrow[from=1-5, to=2-6]
	\arrow[shift left=5, from=2-1, to=2-2]
	\arrow[from=2-2, to=1-2]
	\arrow[shift left, from=2-2, to=1-2]
	\arrow[from=2-3, to=1-3]
	\arrow[shift right=5, from=2-3, to=2-2]
	\arrow[from=2-4, to=1-4]
	\arrow[shift right=5, from=2-4, to=2-3]
	\arrow[from=2-5, to=1-5]
	\arrow[shift right=5, from=2-5, to=2-4]
	\arrow[shift right=5, from=2-6, to=2-5]
\end{tikzcd}
\end{center}
\caption{ The initial quiver $Q$ and exchange degrees of $\mathcal{A}$.} \label{fig:initial quiver of A}
\end{figure}
\end{definition}

\begin{lemma}\label{Lem:mutation-equivalent}
The initial seeds in Definition~\ref{Def:generalized cluster algebras of $A_2$} and in Conjecture~\ref{Conj: rewrite Fraser's conjecture} (for $k=3$) are mutation-equivalent. 
\end{lemma}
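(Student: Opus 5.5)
We will exhibit an explicit finite sequence of mutations that carries Fraser's seed for $k=3$ to the seed of Definition~\ref{Def:generalized cluster algebras of $A_2$}. We track three pieces of data simultaneously: the node weighted quiver (Definition~\ref{Def: quiver mutation}), the exchange polynomials together with their coefficient strings $\rho_i$, and the cluster variables. The cluster variables are recorded as classes of simple modules, or equivalently, via Theorem~\ref{thm: isomorphic tableaux}, as Plücker coordinates/tableaux in $\SSYT(3,[3+\ell],\sim)$. For $k=3$ we have $N=2(\ell-1)=2\ell-2$. Fraser's quiver has the two mutable rows $x_1,\dots,x_{\ell-1}$ and $x_\ell,\dots,x_{2\ell-2}$ (the latter indices coming from $j=2$ in Definition~\ref{defn:initPluckers}), with arrows $x_{i+1}\to x_i\to x_{i+\ell}\to x_{i+1}$, the frozen vertices set to $1$, and $d_1=3$. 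After deleting the frozen vertices, this quiver and the target quiver $Q$ of Figure~\ref{fig:initial quiver of A} are both "ladder''-type quivers with one weight-$3$ vertex at an end. This suggests that a short sequence of mutations, all at weight-$1$ vertices, should suffice.

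The first step is to compare the two quivers combinatorially. In Fraser's quiver, the double arrows and the vertex $v_1$ of weight $3$ sit at the corner of the strip of triangles. In $Q$, the weight-$3$ vertex $v_1$ is joined to $v_2$ and $v_{\ell+1}$, and there is a double arrow $v_2\Rightarrow v_{\ell+1}$. These double arrows arise from mutating weight-$1$ vertices adjacent to the weight-$3$ vertex: by the rule $b'_{kj}=b_{kj}+\frac{|a_{ki}|a_{ij}+a_{ki}|a_{ij}|}{2}$ applied to $\widetilde B D$, an arrow passing through a weight-$1$ vertex next to a weight-$d$ vertex produces entries scaled by $d$ in one direction only, and this is how skew-symmetrizability with $D=\diag(3,1,\dots,1)$ manifests. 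My concrete plan is to try the sequence $\mu_{\ell-1}\mu_{\ell-2}\cdots\mu_{2}$, or its analogue on the second row $\mu_{2\ell-2}\cdots\mu_{\ell}$. This sequence sweeps along one row and "rotates'' the triangulated strip by one step, as in the standard proof that different triangulations of the strip for $\Gr(3,n)$ are mutation equivalent. After each mutation I would verify by induction on the sweep length that the resulting quiver has a fixed pattern. Finally I would apply a relabelling permutation of vertices, which is allowed for mutation equivalence of seeds, to identify the result with $B$ in (\ref{exchhange matrix}).

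The second step is to check the cluster variables and coefficients. Since we only mutate at weight-$1$ vertices, the exchange relations are ordinary three-term Plücker relations in $\mathbb{C}[\mathcal{D}(3,\ell,\sim)]$, and $\rho_1$ is untouched. The frozen variables equal $1$, and they correspond to trivial columns $[i,i+1,i+2]$. The new variables are therefore Plücker coordinates $P_{\{a,b,c\}}$, which I translate to dominant monomials via $\Phi^{-1}$ using the fundamental tableaux $T_{i,s}$ and the rule that a column is $\sim$-equivalent to a $\cup$-product of fundamental columns. I would compare these with the modules listed in Definition~\ref{Def:generalized cluster algebras of $A_2$}. That list consists of KR modules $L(Y_{1,4}\cdots)$ and $L(Y_{2,2\ell+3}\cdots)$ together with one snake module for $x_1$. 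Up to the spectral shift allowed by the periodicity $Y_{i,p}=Y_{i,p+2\ell}$, these should be exactly the columns $\{1,2,m\}$ and $\{1,m,m+1\}$ type Plücker coordinates. The coefficient string $(\rho_{1,1},\rho_{1,2})$ is then identified with the classes of $L(\mathbf{Y}_{1,a})$ and $L(\mathbf{Y}_{2,a})$. These classes match Fraser's $z_s$ specialized at the frozen/Frobenius data, so I only need to check that the mutations do not reverse or change them.

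The main obstacle is the quiver bookkeeping in the first step, namely getting the right mutation sequence so that the double arrow and the orientation near $v_1$ come out exactly as in Figure~\ref{fig:initial quiver of A}, given the modified mutation rule $\mu_i(\widetilde BD)D^{-1}$. I would first work out $\ell=2,3$ by hand to guess the sequence, then prove the general pattern by induction on $\ell$. If the direct sweep fails, I would instead try mutating first at $v_2$ and then sweep, or compose with the global reversal symmetry of the strip.
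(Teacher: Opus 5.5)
There is a genuine gap: the proposal never fixes a mutation sequence, and the two sequences you name cannot work. Compare the clusters directly. We have $x_1=x'_1$ and $x_j=x'_j$ for $j\in[\ell+1,2\ell-2]$. By contrast, $x_2,\dots,x_\ell$ are Kirillov--Reshetikhin modules whose dominant monomials involve only the variables $Y_{2,s}$, while every one of $x'_2,\dots,x'_\ell$ contains $Y_{1,4}$. So exactly the $\ell-1$ variables at $v_2,\dots,v_\ell$ must be replaced, and $v_{\ell+1},\dots,v_{2\ell-2}$ must keep their original variables.

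For $\ell\ge 3$, your sweep $\mu_{\ell-1}\cdots\mu_2$ never touches $v_\ell$, so $x'_\ell$ survives. Your second-row sweep over $v_\ell,\dots,v_{2\ell-2}$ replaces exactly the variables that must be kept and leaves $x'_2$ in the cluster. No relabelling repairs either sequence.

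The sequence that works, and the one the paper uses, is $\mu_2\mu_3\cdots\mu_\ell$: mutate first at $v_\ell$, then descend to $v_2$. The order matters. For $\ell=3$, $\mu_3$ turns the arrow $v_4\to v_2$ into a double arrow, because the path $v_4\to v_3\to v_2$ contributes a second one. Then $\mu_2$ reverses it to $v_2\Rightarrow v_4=v_{\ell+1}$. Doing $\mu_2$ first instead does not lead to $Q$.

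This also corrects your heuristic for the double arrow. Only weight-one vertices are mutated, so $\mu_i(\widetilde BD)D^{-1}$ is ordinary matrix mutation, and $d_1=3$ plays no role in this lemma.

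Your identification of the new cluster variables is also not justified. You plan to read the exchanges as three-term Plücker relations in $\mathbb{C}[\mathcal{D}(3,\ell,\sim)]$ and pull them back via $\Phi^{-1}$. But Theorem~\ref{thm: isomorphic tableaux} is only a monoid isomorphism between dominant monomials and tableaux. A ring isomorphism $K_0(\mathcal{C}_{\varepsilon,\xi})\to\mathbb{C}[\mathcal{D}(3,\ell,\sim)]$ sending classes of simple modules to Plücker coordinates is only expected here; it is part of Fraser's conjecture. So Plücker relations cannot be transported to classes of simple modules.

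Since both seeds consist of such classes, every exchange has to be verified in the Grothendieck ring, i.e. as an identity of $\varepsilon$-characters. This is what the paper does:
\begin{itemize}
\item At $v_\ell$ one checks $\chi_\varepsilon(L(Y_{1,4}\cdots Y_{1,2\ell}))\,\chi_\varepsilon(L(Y_{2,2\ell+3}))=\chi_\varepsilon(L(Y_{1,4}\cdots Y_{1,2\ell}Y_{2,2\ell+3}))+\chi_\varepsilon(L(Y_{1,4}\cdots Y_{1,2\ell-2}))$, so the new variable is $x_\ell$.
\item At $v_{\ell-1}$ the relation $x'_{\ell-1}y=x_\ell\, x'_{\ell-2}+x'_{\ell+1}x'_{2\ell-2}$ gives $y=[L(Y_{2,2\ell+3}Y_{2,2\ell+5})]=x_{\ell-1}$.
\item One continues in the same way down to $v_2$, and finally checks that $\mu_2\cdots\mu_\ell(Q')=Q$.
\end{itemize}
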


\begin{proof}
Denote by $\mathcal{A}'$ the generalized cluster algebra in Conjecture \ref{Conj: rewrite Fraser's conjecture} for $k=3$. The initial seed ($\mathbf{x}', Q'$) of $\mathcal{A}'$ is given as follows. The initial cluster is $\mathbf{x}'= \{x'_1, x'_2, \ldots, x'_{2\ell-2}\}$, where 
{\small
\begin{align*}
&x'_1=[L(Y_{1,4}Y_{1,6} \cdots Y_{1,2 \ell}Y_{2,2 \ell+3} \cdots Y_{2,4 \ell-1})], \quad\, x'_2=[L(Y_{1,4}Y_{1,6} \cdots Y_{1,2 \ell}Y_{2,2 \ell+3} \cdots Y_{2,4 \ell-3})],\quad \ldots,\\
&x'_{\ell-1}=[L(Y_{1,4}Y_{1,6}\cdots Y_{1,2 \ell}Y_{2,2 \ell+3})], \quad x'_{\ell}=[L(Y_{1,4}Y_{1,6} \cdots Y_{1,2 \ell})], \quad x'_{\ell+1}=[L(Y_{1,4}Y_{1,6} \cdots Y_{1,2 \ell-2})],\quad \ldots, \\
&x'_{2\ell-3}=[L(Y_{1,4}Y_{1,6})], \quad  x'_{2\ell-2}=[L(Y_{1,4})].
\end{align*}}
Since the frozen variables $x_{(k-1)(\ell-1)+1},\dots,x_{(k-1)(\ell-1)+\ell}$ in $\mathcal{A}'$ are identified with $1$, we can ignore these frozen variables. The initial quiver $Q'$ and exchange degrees of $\mathcal{A}'$ are shown in Figure \ref{fig: initial quiver of A'}.

\begin{figure}[H]
\begin{center}
{\small
\begin{tikzcd}
	& \begin{array}{c} d_{\ell+1}=1\\x'_{\ell+1}\\v'_{\ell+1} \end{array} & \begin{array}{c} d_{\ell+2}=1\\x'_{\ell+2}\\v'_{\ell+2} \end{array} & \begin{array}{c} \quad\\\cdots\\\cdots \end{array} & \begin{array}{c} d_{2\ell-2}=1\\x'_{2\ell-2}\\v'_{2\ell-2} \end{array} && \\
	\begin{array}{c} v'_1\\x'_1\\d_1=3 \end{array} & \begin{array}{c} v'_2\\x'_2\\d_2=1 \end{array} & \begin{array}{c} v'_3\\x'_3\\d_3=1 \end{array} & \begin{array}{c} \cdots\\\cdots\\\quad \end{array} & \begin{array}{c} v'_{\ell-1}\\x'_{\ell-1}\\d_{\ell-1}=1 \end{array} & \begin{array}{c} v'_\ell\\x'_\ell\\d_\ell=1 \end{array} & \begin{array}{c} v'_{\ell+1}\\x'_{\ell+1}\\\quad \end{array}
	\arrow[from=1-2, to=2-2]
	\arrow[shift left=5, from=1-3, to=1-2]
	\arrow[from=1-3, to=2-3]
	\arrow[shift left=5, from=1-4, to=1-3]
	\arrow[from=1-4, to=2-4]
	\arrow[shift left=5, from=1-5, to=1-4]
	\arrow[from=1-5, to=2-5]
	\arrow[from=2-1, to=1-2]
	\arrow[from=2-2, to=1-3]
	\arrow[shift right=5, from=2-2, to=2-1]
	\arrow[shift right, from=2-3, to=1-4]
	\arrow[shift right=5, from=2-3, to=2-2]
	\arrow[shift left, from=2-4, to=1-5]
	\arrow[shift right=5, from=2-4, to=2-3]
	\arrow[shift right=5, from=2-5, to=2-4]
	\arrow[shift right=5, from=2-6, to=2-5]
	\arrow[shift right=5, from=2-7, to=2-6]
\end{tikzcd}}
\end{center}
\caption{The initial quiver $Q'$ and exchange degrees of $\mathcal{A}'$. Note that in this figure, there are two vertices labeled by $v'_{\ell+1}$.} \label{fig: initial quiver of A'}
\end{figure}

We first show that the cluster variables $x_1,x_2, \ldots, x_{2 \ell-2}$, introduced in Definition \ref{Def:generalized cluster algebras of $A_2$}, can be obtained from the cluster variables $x'_1,x'_2,\ldots, x'_{2\ell-2}$ via a sequence of mutations, where
\begin{align*}
x_1=&[L(Y_{1,4}Y_{1,6}\cdots Y_{1, 2 \ell}Y_{2,2\ell+3}Y_{2,2\ell+5}\cdots Y_{2,4\ell -1})], \hspace{19em} \\  
x_2=&[L(Y_{2,2\ell+3}Y_{2,2\ell+5}\cdots Y_{2,4\ell -1})], \quad
x_3=[L(Y_{2,2\ell+3}Y_{2,2\ell+5}\cdots Y_{2,4\ell -3})], \quad \ldots,\quad  
x_\ell=[L(Y_{2,2\ell+3})],\\
x_{\ell+1}&= [L(Y_{1,4}Y_{1,6}\cdots Y_{1, 2 \ell-2})],\quad\quad
x_{\ell+2}= [L(Y_{1,4}Y_{1,6}\cdots Y_{1, 2 \ell-4})], \quad\quad \ldots,  \quad\,\, 
x_{2\ell-2}=[L(Y_{1,4})].
\end{align*}

Since $x_i=x'_i$ for $i\in[\ell+1, 2 \ell-2]\cup \{1\}$, it suffices to mutate the variables $x'_2,x'_3, \ldots, x'_\ell$. We begin with the cluster variable $x'_\ell$. The exchange relation for the mutation of $x'_\ell$ is 
\[
x'_\ell \,y'_\ell=x'_{\ell-1}+x'_{\ell+1},
\]
that is, 
\[
\chi_\varepsilon(L(Y_{1,4}Y_{1,6} \cdots Y_{1,2 \ell}))\,\chi_\varepsilon( y'_\ell)=\chi_\varepsilon(L(Y_{1,4}Y_{1,6}\cdots Y_{1,2 \ell}Y_{2,2 \ell+3}))+\chi_\varepsilon(L(Y_{1,4}Y_{1,6} \cdots Y_{1,2 \ell-2})),
\]
where $y'_\ell=[L(Y_{2,2 \ell+3})]$. So, $y'_\ell=x_\ell$. Next, we mutate the cluster variable $x'_{\ell-1}$. The exchange relation is
\[
x'_{\ell-1}\, y'_{\ell-1}=x_{\ell}\,x'_{\ell-2}+x'_{\ell+1}\,x'_{2 \ell-2},
\]
that is,
\begin{align*}
\chi_\varepsilon(L(Y_{1,4}Y_{1,6}\cdots Y_{1,2 \ell}Y_{2,2 \ell+3}))\, \chi_\varepsilon(y'_{\ell-1})=& \chi_\varepsilon(L(Y_{2,2 \ell+3}))\,\chi_\varepsilon(L(Y_{1,4}Y_{1,6}\cdots Y_{1,2 \ell}Y_{2,2 \ell+3}Y_{2,2 \ell+5}))\\
& +\chi_\varepsilon(L(Y_{1,4}Y_{1,6} \cdots Y_{1,2 \ell-2}))\,\chi_\varepsilon(L(Y_{1,4})),
\end{align*}
where $y'_{\ell-1}=[L(Y_{2,2 \ell+3}Y_{2,2 \ell+5})]$. So, $y'_{\ell-1}=x_{\ell-1}$. Proceeding in the same way, by mutating cluster variables $x'_{\ell-2}$, $x'_{\ell-3}, \ldots, x'_2$ in succession, we obtain the cluster variables $x_{\ell-2}$, $x_{\ell-3}, \ldots, x_2$, respectively. 

Now we check that after the above sequence of mutations, the initial quiver of $\mathcal{A}'$ becomes the initial quiver of $\mathcal{A}$. 
Let $\mu_i(Q')$ denote the mutation at the $i$-th vertex of quiver $Q'$. According to quiver mutation introduced in Definition \ref{Def: quiver mutation}, we have that $\mu_2\mu_3\cdots\mu_{\ell-1}\mu_{\ell}(Q')$ is the same as $Q$, see Figure \ref{fig:initial quiver of A}.

Thus, the initial seeds in Definition \ref{Def:generalized cluster algebras of $A_2$} and in Conjecture \ref{Conj: rewrite Fraser's conjecture} (for $k=3$) are mutation-equivalent.
\end{proof}

\begin{remark}
Following Lemma \ref{Lem:mutation-equivalent}, we have that the generalized cluster algebras introduced in Conjecture \ref{Conj: rewrite Fraser's conjecture} (for $k=3$) and Definition \ref{Def:generalized cluster algebras of $A_2$} coincide. 
\end{remark}

\subsection{\texorpdfstring{Mutation sequences for real Kirillov--Reshetikhin $U_{\varepsilon}^{\res}({L\mathfrak{sl}_{3}})$-modules}{mutation sequence for real KR modules}}

For $(i,k)$, $(j,v)\in \mathcal{X}$, $s,t\in\ZZ_{\geq 1}$, we denote
\begin{align}\label{Eq:AFM}
M^{j,v,t}_{i,k,s}=X_{i,k}^{(s)}X_{j,v}^{(t)}=Y_{i,k}Y_{i,k+2}\cdots Y_{i,k+2(s-1)}Y_{j,v}Y_{j,v+2}\cdots Y_{j,v+2(t-1)},
\end{align}
where $X_{i,k}^{(s)}$ is defined in Equation (\ref{Eq:KR module}). By the definition of $M^{j,v,t}_{i,k,s}$, we have $M^{j,v,t}_{i,k,s}=M^{i,k,s}_{j,v,t}$. According to Proposition \ref{Prop:minimal affine module}, when $v=k+2s+|j-i|$, the module $L(M^{j,v,t}_{i,k,s})$ is a minimal affinization. We denote 
\begin{align}\label{R}
R_{(i,k,s)(j,v,t)}=\sum_{(p_{1},\ldots,p_{s+t}) \in \overline{\mathscr{P}}_{((i,k+2r)_{0 \leq r \leq s-1}(j,v+2r)_{0 \leq r \leq t-1})}} \prod_{z=1}^{s+t}\mathfrak{m}(p_{z}),
\end{align}
where $\overline{\mathscr{P}}_{(i_t,k_t)_{1\le t\le s+t}}$ is defined in \eqref{Non-overlapping paths},
with $(i_t,k_t)$ given by the concatenation $(i,k+2r)_{0 \le r \le s-1}$ and $(j,v+2r)_{0 \le r \le t-1}$.

\begin{theorem}\label{Th:KR mutation sequence}
Let $\mathcal{A}$ be the generalized cluster algebra defined in Definition \ref{Def:generalized cluster algebras of $A_2$}. Consider the two mutation sequences
\[
(v_1, v_2, v_{\ell+1}, v_{\ell+2}, \ldots, v_{2\ell-2}) \quad \text{and} \quad (v_1, v_2, v_3, v_4, \ldots, v_\ell).
\]
By successively applying these two mutation sequences in an alternating block-wise manner, one obtains a family of cluster variables in $\mathcal{A}$. Among the cluster variables obtained in this way, all cluster variables corresponding to real Kirillov--Reshetikhin modules of $U_\varepsilon^\res(L\mathfrak{sl}_3)$ appear. Moreover, the resulting infinite mutation process is periodic and returns to the initial cluster.
\end{theorem}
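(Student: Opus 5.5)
The approach is to track the seed explicitly under the two mutation blocks and identify every new cluster variable with the class of an explicit simple module. Write $\mu_{\mathrm{I}}=\mu_{2\ell-2}\cdots\mu_{\ell+2}\mu_{\ell+1}\mu_2\mu_1$ and $\mu_{\mathrm{II}}=\mu_\ell\cdots\mu_4\mu_3\mu_2\mu_1$ (mutations applied left to right in the order listed). The key steps are:

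1. Compute $\mu_{\mathrm{I}}(Q)$ and $\mu_{\mathrm{II}}\mu_{\mathrm{I}}(Q)$ using Definition \ref{Def: quiver mutation}, with $d_1=3$ entering only in the mutation at $v_1$.
2. Show that $\mu_{\mathrm{II}}\mu_{\mathrm{I}}(Q)$ is isomorphic to $Q$ as a node-weighted quiver. I expect either the identity relabelling or the symmetric relabelling that swaps the roles of the two rows $v_2,\dots,v_\ell$ and $v_{\ell+1},\dots,v_{2\ell-2}$.
3. Compute $\mu_{\mathrm{I}}(\rho)$ and $\mu_{\mathrm{II}}\mu_{\mathrm{I}}(\rho)$. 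Only $\rho_1$ changes, by reversal at each $\mu_1$. Since $\mu_1$ occurs twice per period, $\rho_1$ returns to itself. Alternatively, $\rho_{1,1}$ and $\rho_{1,2}$ are swapped together with the Dynkin diagram automorphism $1\leftrightarrow 2$.

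This reduces the combinatorial part of the statement to a quiver computation, which I would carry out for general $\ell$ by induction along the two strings of vertices.

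For the cluster variables, I would guess the module attached to each vertex after each single mutation. The natural guesses, suggested by the proof of Lemma \ref{Lem:mutation-equivalent}, are:

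1. KR modules $L(X^{(s)}_{1,k})$ and $L(X^{(s)}_{2,k})$ with $s\le \ell-1$, spectral parameters shifted by $2$ after each block;
2. modules $L(M^{j,v,t}_{i,k,s})$ from (\ref{Eq:AFM}) at the vertex $v_1$, which is the minimal affinization type.

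I would state these as an explicit table indexed by the block number $m$ and vertex, with second indices taken modulo $2\ell$.

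Each exchange relation then becomes an identity of $\varepsilon$-characters. For $d_i=1$ vertices it is a $T$-system-type identity
\[
\chi_\varepsilon(L(X^{(s)}_{i,k}))\,\chi_\varepsilon(L(X^{(s)}_{i,k+2}))=\chi_\varepsilon(L(X^{(s+1)}_{i,k}))\,\chi_\varepsilon(L(X^{(s-1)}_{i,k+2}))+\chi_\varepsilon(L(X^{(s)}_{j,k+1})),
\]
or its minimal-affinization analogue. For $v_1$ it is a trinomial relation with coefficients $\rho_{1,1}=\chi_\varepsilon(L(X^{(\ell)}_{1,0}))$ and $\rho_{1,2}=\chi_\varepsilon(L(X^{(\ell)}_{2,1}))$.

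To verify these relations, I would use the path description of $\chi_\varepsilon(L(X^{(s)}_{i,t}))$ for $s\le\ell$ from \cite{ALLZ25} and the sums $R_{(i,k,s)(j,v,t)}$ of (\ref{R}). The argument has three parts:

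1. Expand both sides as sums over tuples of paths on the tube.
2. Give a weight-preserving bijection between pairs of non-overlapping tuples (an involution swapping the tails of paths at the first intersection, in the Lindström–Gessel–Viennot style), so that the left side equals the sum of the right-side terms.
3. Use injectivity of $\chi_\varepsilon$ \cite[Theorem 3.2]{FM02} to lift the identities to $K_0(\mathcal{C}_{\varepsilon,\xi})$.

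The relabelling $Y_{i,s}=Y_{i,s+2\ell}$ makes each identity depend only on residues. That is why applying $\mu_{\mathrm{I}}$ and $\mu_{\mathrm{II}}$ alternately $\ell$ times (or $2\ell$ times, if the automorphism $1\leftrightarrow2$ intervenes) returns every variable to its initial value. Combined with the quiver periodicity, this gives periodicity of the whole seed.

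To see that all real KR modules appear, I would invoke Theorem \ref{Th: real KR module}: the real ones are exactly $L(X^{(t)}_{i,k})$ with $t\in[1,\ell-1]$, $i\in\{1,2\}$, and $k$ ranging over $\ell$ residues of fixed parity. In the table above, the variables at $v_{\ell+1},\dots,v_{2\ell-2}$ and $v_2,\dots,v_\ell$ sweep all lengths $t$ at every shift $k$ across one period. A counting check confirms the coverage: $2(\ell-1)\ell$ modules, and each block contributes $\ell-1$ KR variables at a fresh spectral shift.

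The main obstacle is the $d_1=3$ exchange at $v_1$. This is a genuinely new trinomial identity whose middle terms involve the non-real modules $L(X^{(\ell)}_{i,k})$, which arise as Frobenius pullbacks. The path model has to handle paths wrapping around the tube, where the non-overlapping condition fails precisely at length $\ell$. I would first verify it for $\ell=2,3$ by direct computation against \cite{Gle16}. For general $\ell$, I would then use the decomposition $\chi_q(L(M'))|_{q=\varepsilon}=\chi_\varepsilon(L(M'))+\sum a_m\chi_\varepsilon(L(m))$ to reduce it to the generic-$q$ $T$-system together with explicit subtraction of the $\mathbf{Y}$-terms.
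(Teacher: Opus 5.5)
Your skeleton agrees with the paper's. Every variable produced by the two blocks is either a KR class $[L(X^{(s)}_{i,k})]$ with $s\le\ell-1$ or, at $v_1$, a minimal affinization $[L(M^{j,k\pm1,\ell-1}_{i,k,\ell-1})]$. Each mutation is one of the three relations of Lemma~\ref{Lem:exchange relations}: the $T$-system at $v_3,\dots,v_{2\ell-2}$, relation (2) at $v_2$, and the cubic (3) at $v_1$. Periodicity holds because a full round $\mu_{\mathrm{II}}\mu_{\mathrm{I}}$ restores $Q$ and $\rho_1$ and replaces every variable by its image under $Y_{i,s}\mapsto Y_{i,s-2}$, so $\ell$ rounds close up modulo $2\ell$. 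Note that the shift is per round, not per block, since each block moves the strings of only one node. Your quiver guess is correct: $\mu_{\mathrm I}(Q)$ is $Q$ with $v_{2+r}$ and $v_{\ell+r}$ interchanged for $1\le r\le \ell-2$, and $\mu_{\mathrm{II}}$ is the image of $\mu_{\mathrm I}$ under this swap. This bookkeeping is more explicit than what the paper writes down. The real hole in your plan is in the verification of the exchange relations.

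You never show that the path sums you expand \emph{are} the $\varepsilon$-characters of the simple modules in the seed. For KR modules of length at most $\ell-1$ this is \cite[Lemma 5.6]{ALLZ25}. For the variables at $v_1$, however, $R_{(i,k,\ell-1)(j,k+2\ell-1,\ell-1)}$ is only $\chi_q(L(M))|_{q=\varepsilon}$. By \cite[Proposition 2.5]{FM02} this equals $\chi_\varepsilon(L(M))$ plus possibly characters of lower simple modules. For $\ell\ge3$ these snakes have $2\ell-2$ points and span more than one period of the tube, so after $Y_{i,s}=Y_{i,s+2\ell}$ negative factors can cancel positive ones. Lemma~\ref{Lem: minimal affine is special} is exactly the dominant-monomial analysis showing this does not happen for $t=\ell-1$; for $t<\ell-1$ extra dominant monomials do occur. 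So wrap-around trouble is not confined to length $\ell$, as you suggest.

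Without this specialness statement, a tail-swapping bijection proves only an identity between specialized $q$-characters. Injectivity of $\chi_\varepsilon$ then says nothing about $[L(M)]$, so you cannot conclude that the new cluster variable is the class of a simple module. Once you have specialness, the involution on the tube is unnecessary. The paper obtains (1) and (2) by specializing the generic $T$-system and the generic minimal-affinization relation, which hold for unwrapped indices and so avoid winding issues altogether.

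For the $d_1=3$ relation there is no single generic identity to specialize. The paper substitutes (2) for both minimal affinizations and reduces the result using (1), the relation (\ref{Eq:minimai decomposition}), and the length-$\ell$ identity. In that identity the specialized KR character splits as $\chi_\varepsilon(L(X^{(\ell)}_{2,1}))+\chi_\varepsilon(L(M^{2,3,\ell-2}_{1,0,1}))$. This splitting is where the Frobenius-pullback coefficients $\rho_{1,1},\rho_{1,2}$ enter, and it is what your ``subtraction of $\mathbf{Y}$-terms'' would have to be turned into.
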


The proof of Theorem \ref{Th:KR mutation sequence} will be given at the end of this subsection. In the following, we present several relations satisfied by modules of $U_{\varepsilon}^{\res}({L\mathfrak{sl}_{3}})$.

\begin{lemma}\label{Lem: minimal affine is special}
For $U_{\varepsilon}^{\res}({L\mathfrak{sl}_{3}})$, let $\varepsilon^{2 \ell}=1$ with $\ell\in \ZZ_{\geq 2}$. Let $\{i,j\}=\{1,2\}$ and let $(i,k), (j,v)\in\mathcal{X}$ with $v=k+2 \ell-1$. Then, for $1\leq t \leq \ell-1$, 
\begin{align*}
\chi_\varepsilon(L(M^{j,v,t}_{i,k,\ell-1}))=\chi_q(L(M^{j,v,t}_{i,k,\ell-1}))|_{q=\varepsilon}.
\end{align*}
Moreover, for $t=\ell-1$, the minimal affinizations $L(M^{j,v,t}_{i,k,\ell-1})$ are special.
\end{lemma}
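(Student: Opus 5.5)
The plan is to argue through the specialization formula recalled in Section~\ref{Snake modules}:
\[
\chi_q(L(M'))|_{q=\varepsilon}=\chi_\varepsilon(L(M'))+\sum_{m<M'}a_m\chi_\varepsilon(L(m)),\qquad a_m\in\ZZ_{\ge0}.
\]
Here $M'=M^{j,v,t}_{i,k,\ell-1}$ is read as the snake monomial. Its snake is the KR string $(i,k),(i,k+2),\dots,(i,k+2\ell-4)$, followed by $(j,v),\dots,(j,v+2t-2)$. The jump from $(i,k+2\ell-4)$ to $(j,k+2\ell-1)$ has height $3=|j-i|+2$, so the snake is minimal and $L(M')$ is a minimal affinization by Proposition~\ref{Prop:minimal affine module}.

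First, I will check that $M'$ is $\ell$-acyclic, so that Theorem~\ref{Th:decomposition} gives no Frobenius factor. For the index $i$, there are only $\ell-1$ factors, occupying $\ell-1$ distinct residues mod $2\ell$. For the index $j$, there are only $t\le\ell-1$ factors. So no $\mathbf Y_{r,b}$ divides $M'$.

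By the formula above, the claimed equality is equivalent to $a_m=0$ for all $m<M'$. It therefore suffices to show that the only dominant monomial of $\chi_q(L(M'))|_{q=\varepsilon}$ equal to a dominant monomial of $\mathcal P^+$ with $m<M'$ does not occur. Concretely, I will show that every monomial of $\chi_q(L(M'))|_{q=\varepsilon}$ other than $M'$ is non-dominant in $\mathcal P^+_{\varepsilon}$, i.e. after identifying $Y_{r,s}=Y_{r,s+2\ell}$. Any lower composition factor $L(m)$ would contribute its highest monomial $m$, which is dominant, so this forces all $a_m=0$. As a byproduct, $\chi_\varepsilon(L(M'))$ then has a unique dominant monomial, so the module is special; this is the case $t=\ell-1$.

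To carry this out, I will use the Mukhin--Young path formula $\chi_q(L(M'))=\sum\prod\mathfrak m(p_z)$ over non-overlapping tuples in $\overline{\mathscr P}$, drawn on the tube where rows are taken mod $2\ell$. For $\mathfrak{sl}_3$, the paths for $(i,k')$ are short: they pass through columns $0,\dots,3$, and each path has at most two corners. Lowering a path from the top creates a lower corner $Y^{-1}_{r,s}$, which must be cancelled by an upper corner $Y_{r,s}$ of another path in the tuple after reduction mod $2\ell$. In the generic setting, non-overlapping forbids such cancellations. Modulo $2\ell$, however, a cancellation can occur between the first path of the $i$-string and a path of the $j$-string, or a later path of the $i$-string, whose heights differ by a multiple of $2\ell$. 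I will take two inputs as the guide. The first is the argument in Theorem~\ref{Th: real KR module}, which handles strings of length $<\ell$ shifted by $2\ell$. The second is the explicit KR description from \cite[Theorem 5.2]{ALLZ25}. Using these, I will track the lowest point reached by the tuple. The $i$-string, of length $\ell-1$, together with the $j$-string occupies rows from $k$ to $k+2\ell-1+2t$ with $t\le\ell-1$. Since the total height span is less than the wrap-around length needed for the top path's lowered corner to meet an unlowered upper corner at the same residue, some negative power survives whenever some path is lowered.

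I expect this wrap-around cancellation analysis to be the main obstacle. Part of it is that the boundary case $t=\ell-1$ is tight. The other part is that one must also rule out cancellations between lowered paths of different strings, not just between a lowered path and highest paths. My first attempt will be to argue via the \emph{rightmost} lowered corner: take the path $p_z$ in the tuple whose lowering is minimal, and look at its unique new lower corner $Y^{-1}_{r,s}$. Non-overlapping, together with the height bound, forces every upper corner with residue $s$ to belong to a strictly lower path at height difference $\ne 0 \bmod 2\ell$. Hence this negative factor cannot cancel. I will verify this with a small case check in the two possibilities $(i,j)=(1,2)$ and $(2,1)$, which are symmetric by the diagram automorphism.
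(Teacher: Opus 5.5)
There is a genuine gap. The central claim you plan to prove is that every monomial of $\chi_q(L(M'))|_{q=\varepsilon}$ other than $M'$ becomes non-dominant after identifying $Y_{r,s}=Y_{r,s+2\ell}$. This is false for $1\le t\le \ell-2$.

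Take $\ell=3$, $t=1$, $(i,k)=(1,0)$, so $M'=Y_{1,0}Y_{1,2}Y_{2,5}$. Consider the highest paths of $\mathscr P_{1,0}$ and $\mathscr P_{1,2}$, together with the lowest path $(0,7),(1,8),(2,7),(3,6)$ of $\mathscr P_{2,5}$. These form a non-overlapping triple in the plane, which is what governs $\chi_q$ (not non-overlap on the tube). So $Y_{1,0}Y_{1,2}Y_{1,8}^{-1}$ occurs in $\chi_q(L(M'))$. Since $\varepsilon^6=1$, it specializes to the dominant monomial $Y_{1,0}$.

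In general, take all $i$-paths highest and all $j$-paths lowest. This gives $\prod_{r=0}^{\ell-2}Y_{i,k+2r}\prod_{r=0}^{t-1}Y^{-1}_{i,k+2\ell+2r+2}$. For $2t\le 2\ell-4$ this reduces to the dominant monomial $Y_{i,k}Y_{i,k+2t+2}\cdots Y_{i,k+2\ell-4}\neq M'$. This is exactly the paper's Case 1 with $t<\ell-1$.

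Your height-span heuristic breaks down precisely here. The span is $2\ell-1+2t$, which exceeds $2\ell$. The new lower corner $Y^{-1}_{i,v+2r+3}$ of a lowered $j$-path sits exactly $2\ell$ below an upper corner $Y_{i,k+2r+2}$ of a \emph{higher} $i$-path. So the ``rightmost lowered corner'' argument cannot go through. In particular $L(M')$ is not special for $t<\ell-1$, which is why the lemma asserts specialness only for $t=\ell-1$. Your ``byproduct'' would therefore prove a false statement.

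What is missing for $t\le \ell-2$ is a second step. For each extra dominant monomial $m$ you must show $a_m=0$, i.e.\ that $L(m)$ is not a composition factor of the specialization. One way is to exhibit a monomial of $\chi_\varepsilon(L(m))$ that is absent from $\chi_q(L(M'))|_{q=\varepsilon}$. The paper does this with the lowest-weight monomial of $L(m)$. In the example above, neither $Y_{1,2}^{-1}Y_{2,1}$ nor $Y_{2,3}^{-1}$ from $\chi_\varepsilon(L(Y_{1,0}))$ occurs among the $15$ specialized monomials. So $L(Y_{1,0})$ is not a factor, and the equality still holds.

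Your non-dominance strategy is adequate only for $t=\ell-1$. There the last lowered $j$-path leaves $Y^{-1}_{i,k+4\ell-2}=Y^{-1}_{i,k+2\ell-2}$, whose residue is not covered by the $i$-string. The case analysis (as in the paper's Cases 1--3) then gives a unique dominant monomial, hence both the equality and specialness.
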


\begin{proof}
We prove the lemma in the case $i=1$, $j=2$, and $k=0$; the proofs for the other cases follow a similar approach. For $t\leq \ell-1$, we need to prove the identity 
\[
\chi_\varepsilon(L(M^{2, 2\ell-1,t}_{1,0,\ell-1}))=\chi_q(L(M^{2,2\ell-1,t}_{1,0,\ell-1}))|_{q=\varepsilon},
\]
that is,
{\small
\begin{align*}
\chi_\varepsilon(L(Y_{1,0}Y_{1,2}\cdots Y_{1,2\ell-4}Y_{2,2\ell-1}Y_{2,2\ell+1}\cdots Y_{2,2 \ell+2t-3}))=\chi_q(L(Y_{1,0}Y_{1,2}\cdots Y_{1,2\ell-4}Y_{2,2\ell-1}Y_{2,2\ell+1}\cdots Y_{2,2\ell+2t-3}))|_{q=\varepsilon}.
\end{align*}}
To establish the above identity, it suffices to show that $R_{(1,0,\ell-1)(2,2\ell-1,t)}$ contains no dominant monomial $m$ except the highest weight monomial such that the $\varepsilon$-character of $L(m)$ is contained in $R_{(1,0,\ell-1)(2,2\ell-1,t)}$. Monomials in $R_{(1,0,\ell-1)(2,2\ell-1,t)}$ are of the form $\mathfrak{m}(p_1)\cdots \mathfrak{m}(p_{t+\ell-1})$, where  $p_i \in \mathscr{P}_{1,2i-2}$ for $i\in[1, \ell)$, and $p_i \in \mathscr{P}_{2,2i-1}$ for $i\in[\ell, t+\ell-1]$, see Formula (\ref{R}). The proof is divided into the following three cases.

{\bf Case} 1. Suppose that $p_a\in \mathscr{P}_{1,2a-2}$ with $\mathfrak{m}(p_a)=Y_{1,2a-2}$ for some $a\in[1, \ell)$. To obtain a dominant monomial, a path $p_b\in \mathscr{P}_{2,2b-1}$, for some $b\in[\ell,\ell+t-1]$, must either consist entirely of upper corners, or have $(1,2a-2+2 \ell)$ as its unique lower corner. If $p_b$ consists entirely of upper corners, then $\mathfrak{m}(p_b)=Y_{2,2b-1}$, where $b\in[\ell,\ell+t-1]$. In this case, we obtain the dominant monomial $Y_{1,0}Y_{1,2}\cdots Y_{1,2\ell-4}Y_{2,2\ell-1}Y_{2,2\ell+1}\cdots Y_{2,2\ell+2t-3}$, which is the highest weight monomial. If $p_b$ (with $b\in[\ell,\ell+t-1]$) has $(1,2a-2+2\ell)$ as its unique lower corner, then $a\geq 2$ and $\mathfrak{m}(p_b)=Y^{-1}_{1,2a-2+2 \ell}=Y^{-1}_{1,2b+2}$, where $p_b\in\mathscr{P}_{2,2b-1}$. By Formula (\ref{Non-overlapping paths}), we have $\mathfrak{m}(p_u)=Y^{-1}_{1,2u+2}$, where $p_u\in\mathscr{P}_{2,2u-1}$, $u\in [b, \ell+t-1]$.

In other words, in order to obtain a dominant monomial, for $a\in[1,\ell)$, we take $\mathfrak{m}(p_a)=Y_{1,2a-2}$. For $z\in[\ell,b)$, we take $\mathfrak{m}(p_z)=Y_{2,2z-1}$. For $u\in[b, \ell+t-1]$, we take $\mathfrak{m}(p_u)=Y^{-1}_{1, 2u+2}$. Thus, we obtain the monomial 
\begin{align}\label{monomial}
Y_{1,0}\cdots Y_{1,2 \ell-4}Y_{2,2\ell-1}\cdots Y_{2,2b-3} Y^{-1}_{1,2b+2}\cdots Y^{-1}_{1,2 \ell+2t}.
\end{align}

If $t=\ell-1$, then, since $a\geq 2$, the factor $\mathfrak{m}(p_{\ell+t-1})=Y^{-1}_{1,2 \ell+2t}=Y^{-1}_{1,4 \ell-2}$ cannot be canceled by any other factor appearing in the product. Therefore, the monomial (\ref{monomial}) is not dominant.

If $t<\ell-1$, then the monomial (\ref{monomial}) simplifies to 
\[
Y_{1,0}\cdots Y_{1,2a-4}Y_{1,2a+2t+2}\cdots Y_{1,2\ell-4}Y_{2,2 \ell-1}\cdots Y_{2,2b-3}, \quad 
\]
which is a dominant monomial. Its lowest weight monomial is  
\[
Y^{-1}_{2,3} Y^{-1}_{2,5}\cdots Y^{-1}_{2,2 \ell-1} Y^{-1}_{1,2 \ell+2}\cdots Y^{-1}_{1,2b} \cdot Y_{2,2b-1}\cdots Y_{2,2 \ell+2t-3},
\]
which does not appear in $R_{(1,0,\ell-1)(2,2\ell-1,t)}$, since $Y^{-1}_{1,2b}$ and $Y_{2,2b-1}$ are overlapping. Thus, there is no dominant monomial $m$ except the highest weight monomial such that the $\varepsilon$-character of $L(m)$ is contained in $R_{(1,0,\ell-1)(2,2\ell-1,t)}$.

{\bf Case} 2. Suppose that $p_a\in \mathscr{P}_{1,2a-2}$ with $\mathfrak{m}(p_a)=Y^{-1}_{1,2a}Y_{2,2a-1}$ for $a\in[1, \ell)$. To obtain a dominant monomial, a path $p_b\in \mathscr{P}_{2,2b-1}$, for $b\in[\ell,t+\ell-1]$, must have $(1,2a+2+2 \ell)$ as its upper corner. In this case, we have 
\[
\mathfrak{m}(p_b)=Y_{1,2a+2 \ell}Y^{-1}_{2,2a+2 \ell+1}=Y_{1,2b}Y^{-1}_{2, 2b+1}.
\]
Hence
\[
\mathfrak{m}(p_a)\mathfrak{m}(p_b)=Y^{-1}_{1,2a}Y_{2,2a-1}Y_{1,2a+2 \ell}Y^{-1}_{2,2a+2 \ell+1}=Y_{2,2a-1}Y^{-1}_{2,2a+2 \ell+1}.
\]
To obtain a dominant monomial, let $\mathfrak{m}(p_{a+1})=Y^{-1}_{1,2a+2}Y_{2,2a+1}$. Therefore,
\[
\mathfrak{m}(p_a)\mathfrak{m}(p_b)\mathfrak{m}(p_{a+1})=Y_{2,2a-1}Y^{-1}_{2,2a+2 \ell+1}Y^{-1}_{1,2a+2}Y_{2,2a+1}=Y_{2,2a-1}Y^{-1}_{1,2a+2}.
\]
By Formula (\ref{Non-overlapping paths}), proceeding in this way, since $t\leq \ell-1$, we will always find that there exists a monomial, namely $Y^{-1}_{1,2 \ell-2}$ or $Y^{-1}_{2,2\ell+2t-1}$, which cannot be canceled. Thus, in this case, no dominant monomial appears in $R_{(1,0,\ell-1)(2,2\ell-1,t)}$.

{\bf Case} 3. Suppose that $p_a\in \mathscr{P}_{1,2a-2}$ with $\mathfrak{m}(p)=Y^{-1}_{2,2a+1}$ for some $a\in[1, \ell)$. To obtain a dominant monomial, the path $p_b\in \mathscr{P}_{2,2b-1}$, for some $b\in[\ell,\ell+t-1]$, must have $(2,2a+2\ell+1)$ as its upper corner. Then 
\[
\mathfrak{m}(p_b)=Y_{2,2a+2\ell+1}=Y_{2,2b-1}.
\]
Using the same method as in Case 1, we conclude that no dominant monomial appears in $R_{(1,0,\ell-1)(2,2\ell-1,t)}$ in this case.

In summary, we have
\begin{align*}
\chi_\varepsilon(L(M^{j,v,t}_{i,k,\ell-1}))=\chi_q(L(M^{j,v,t}_{i,k,\ell-1}))|_{q=\varepsilon}.
\end{align*}
\end{proof}  

\begin{lemma}\label{Lem:exchange relations}
For $U_{\varepsilon}^{\res}(L\mathfrak{sl}_3)$, let $\varepsilon^{2\ell} = 1$ with $\ell \in \mathbb{Z}_{\ge 3}$. Then, for $\{i,j\} = \{1,2\}$ and $(i,k) \in \mathcal{X}$, the following identities hold:
{\small
\begin{enumerate}
\item $\chi_\varepsilon(L(X^{(u+1)}_{i,k}))\,\chi_\varepsilon(L(X^{(u+1)}_{i,k+2}))=
\chi_\varepsilon(L(X^{(u+2)}_{i,k}))\,\chi_\varepsilon(L(X^{(u)}_{i,k+2}))+\chi_\varepsilon(L(X^{(u+1)}_{j,k+1}))$, \textit{ where } $0\leq u \leq \ell-3$;

\item $\chi_\varepsilon(L(X^{(\ell-1)}_{i,k}))\,\chi_\varepsilon(L(X^{(\ell-1)}_{j,k-1}))=\chi_\varepsilon(L(M^{j,k-1,\ell-1}_{i,k,\ell-1}))
+\chi_\varepsilon(L(X^{(\ell-2)}_{i,k}))\,\chi_\varepsilon(L(X^{(\ell-2)}_{j,k+1}))$; 

\item $\chi_\varepsilon(L(M^{j,k+1,\ell-1}_{i,k,\ell-1}))\,\chi_\varepsilon(L(M^{j,k-1,\ell-1}_{i,k,\ell-1}))
=\chi_\varepsilon(L(X^{(\ell-1)}_{i,k}))^3+\chi_\varepsilon(L(X^{(\ell-2)}_{j,k+1}))^3\\ +\chi_\varepsilon(L(X^{(\ell)}_{j,k-1}))\, \chi_\varepsilon(L(X^{(\ell-1)}_{i,k}))^2\,\chi_\varepsilon(L(X^{(\ell-2)}_{j,k+1}))
+\chi_\varepsilon(L(X^{(\ell)}_{i,k}))\,\chi_\varepsilon(L(X^{(\ell-1)}_{i,k}))\,\chi_\varepsilon(L(X^{(\ell-2)}_{j,k+1}))^2$.
\end{enumerate}}
\end{lemma}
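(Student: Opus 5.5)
Throughout, $\varepsilon$-characters are compared via the injectivity of $\chi_\varepsilon$, as in Lemma~\ref{Lem: minimal affine is special}. For each identity I would compute both sides as explicit Laurent polynomials and argue by dominant monomials: both sides lie in the image of $\chi_\varepsilon$, which is spanned by characters of simple modules. So it suffices to show that the left-hand side contains exactly the dominant monomials predicted by the right-hand side, with multiplicities, after recursively removing the $\varepsilon$-characters of simple modules attached to those monomials. As in the proof of Theorem~\ref{Th: real KR module}, I would treat $i=1$, $k=0$; the other cases follow by the symmetry $1\leftrightarrow 2$ and by shifting $k$.

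For (1), all levels are at most $\ell-1$. By the path description of $\chi_\varepsilon(L(X^{(s)}_{i,t}))$, $s\le \ell$, from \cite{ALLZ25}, these characters coincide with the generic $q$-characters specialized at $q=\varepsilon$, with the wrap-around of the tube playing no role, since $u+2\le \ell-1$. Identity (1) is then the specialization at $q=\varepsilon$ of the usual $T$-system for $U_q(L\mathfrak{sl}_3)$. The one thing to check is that every module appearing stays in the range where specialization commutes with taking simple characters.

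For (2), I would multiply the two KR characters using the non-overlapping path model. The product decomposes into tuples of paths that are non-overlapping in the tube, which by Lemma~\ref{Lem: minimal affine is special} form $\chi_\varepsilon(L(M^{j,k-1,\ell-1}_{i,k,\ell-1}))$, and the remaining overlapping tuples. For the latter, the standard Mukhin--Young crossing/swap bijection at the first overlap identifies them with tuples for $X^{(\ell-2)}_{i,k}X^{(\ell-2)}_{j,k+1}$. One must check that this bijection survives the identification $t\sim t+2\ell$. This is plausible because the total span is $2\ell-2<2\ell$.

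Identity (3) is the generalized (degree-$3$) exchange relation and is the main obstacle. Here the levels $\ell$ appear, so Theorem~\ref{Th:decomposition} applies: $L(X^{(\ell)}_{j,k-1})$ and $L(X^{(\ell)}_{i,k})$ are the Frobenius pullbacks $\mathbf{Y}_{j}$ and $\mathbf{Y}_{i}$, with the characters of $V_{\omega_j}$, $V_{\omega_i}$ in the variables $\mathbf{Y}$. The plan is as follows.
\begin{enumerate}
\item Use Lemma~\ref{Lem: minimal affine is special} (case $t=\ell-1$) to write each factor on the left-hand side as a special module with explicit path expansion.
\item Enumerate the dominant monomials of the product, organized by how many of the $\ell-1$ paths in each block are ``shifted'' around the tube. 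I expect exactly the four highest-weight monomials of the right-hand side:
\begin{itemize}
\item $(X^{(\ell-1)}_{i,k})^3$,
\item $(X^{(\ell-2)}_{j,k+1})^3$,
\item $\mathbf{Y}_j (X^{(\ell-1)}_{i,k})^2 X^{(\ell-2)}_{j,k+1}$,
\item $\mathbf{Y}_i X^{(\ell-1)}_{i,k} (X^{(\ell-2)}_{j,k+1})^2$,
\end{itemize}
where the last two appear because the products telescope into full cycles $\mathbf{Y}$.
\item Use Theorem~\ref{Th: real KR module} (realness for levels below $\ell$) together with Theorem~\ref{Th:decomposition}, so that the right-hand-side products are simple or have known characters.
\item Match the dominant monomials and their multiplicities, then conclude by injectivity of $\chi_\varepsilon$.
\end{enumerate}
The delicate point is the cube terms. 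Since $(X^{(\ell-1)})^3$ is a cube of a real module, its character is a genuine power, but verifying that the multiplicity bookkeeping of tube-wrapped path tuples produces exactly these four terms, with no extra dominant monomials, will require a careful case analysis like Cases 1--3 in Lemma~\ref{Lem: minimal affine is special}. I would first verify the identity for $\ell=3$ by direct computation as a check before attempting the general argument.
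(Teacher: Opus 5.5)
Part (1) is handled exactly as in the paper. For (2), your crossing bijection is really the generic Mukhin--Young argument, and it should be run that way. The paper replaces $M^{j,k-1,\ell-1}_{i,k,\ell-1}$ by its snake representative $M^{j,k+2\ell-1,\ell-1}_{i,k,\ell-1}$. It then uses the generic relation $\chi_q(L(X^{(\ell-1)}_{i,k}))\,\chi_q(L(X^{(\ell-1)}_{j,k+2\ell-1}))=\chi_q(L(M^{j,k+2\ell-1,\ell-1}_{i,k,\ell-1}))+\chi_q(L(X^{(\ell-2)}_{i,k}))\,\chi_q(L(X^{(\ell-2)}_{j,k+2\ell+1}))$ and specializes using Lemma~\ref{Lem: minimal affine is special} and \cite{ALLZ25}. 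Your tube formulation is not right as stated. For the lifted snake, the $r$-th paths of the two blocks share an endpoint modulo $2\ell$, so no tuple is non-overlapping in the tube. Also, the lifted monomial spans $4\ell-5$, not $2\ell-2$. If you work in the plane and specialize afterwards, no wrap-around check is needed.

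The genuine gap is in (3). The principle you invoke is sound: a nonzero element of the image of $\chi_\varepsilon$ has a dominant monomial. But you never carry out the computation that would apply it, and its predicted outcome is wrong. The right-hand side is not a sum of special characters. The level-$\ell$ factors are Frobenius pullbacks, and their $\mathbf{Y}$-monomials cancel negative powers coming from the other factors.

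Take $\ell=3$ and $(i,j,k)=(1,2,2)$:
\begin{itemize}
\item The term $\chi_\varepsilon(L(X^{(3)}_{2,1}))\,\chi_\varepsilon(L(X^{(2)}_{1,2}))^2\,\chi_\varepsilon(L(X^{(1)}_{2,3}))$ contains the dominant monomial $\mathbf{Y}_2\,(Y_{1,2}Y_{1,4})^2\,Y_{1,4}Y_{2,5}^{-1}=Y_{1,2}^2Y_{1,4}^3Y_{2,1}Y_{2,3}$.
\item The last term contains $\mathbf{Y}_1\,Y_{1,2}Y_{1,4}\,Y_{2,3}\,Y_{1,0}^{-1}=Y_{1,2}^2Y_{1,4}^2Y_{2,3}$.
\end{itemize}
So the product on the left has far more than four dominant monomials. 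You would have to match all of them, with multiplicities, against the right-hand side for arbitrary $\ell$. That is a count over pairs of $(2\ell-2)$-tuples of paths, much heavier than Cases 1--3 of Lemma~\ref{Lem: minimal affine is special}, and the proposal does not attempt it.

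Step 3 does not fill this gap. Theorem~\ref{Th: real KR module} only gives simplicity of $L(X^{(\ell-1)}_{i,k})^{\otimes 2}$. It says nothing about the cube, or about $L(X^{(\ell-1)}_{i,k})^{\otimes 2}\otimes L(X^{(\ell-2)}_{j,k+1})$. In any case, a direct comparison of dominant monomials does not need simplicity.

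What is missing is the paper's reduction, which avoids this enumeration entirely.
\begin{itemize}
\item Use (2) twice to write both factors on the left as differences of products of Kirillov--Reshetikhin characters. The second factor comes from $M^{j,k+1,\ell-1}_{i,k,\ell-1}=M^{i,k,\ell-1}_{j,k+1,\ell-1}$, i.e. (2) applied at $(j,k+1)$.
\item Expand, so that (3) becomes a polynomial identity among KR characters.
\item Close it with the $T$-system (1) and a few boundary relations where the level-$\ell$ characters enter:
\begin{itemize}
\item $\chi_\varepsilon(L(X^{(\ell-1)}_{2,1}))\chi_\varepsilon(L(X^{(\ell-1)}_{2,3}))=\chi_\varepsilon(L(X^{(\ell-2)}_{2,3}))\bigl(\chi_\varepsilon(L(X^{(\ell)}_{2,1}))+\chi_\varepsilon(L(M^{2,3,\ell-2}_{1,0,1}))\bigr)+\chi_\varepsilon(L(X^{(\ell-1)}_{1,2}))$;
\item the analogous expansion of $\chi_\varepsilon(L(X^{(\ell-1)}_{1,2}))\chi_\varepsilon(L(X^{(1)}_{1,0}))$;
\item $\chi_\varepsilon(L(M^{w,v+3,\ell-2}_{u,v,1}))=\chi_\varepsilon(L(X^{(1)}_{u,v}))\chi_\varepsilon(L(X^{(\ell-2)}_{w,v+3}))-\chi_\varepsilon(L(X^{(\ell-3)}_{w,v+3}))-\chi_\varepsilon(L(X^{(\ell-3)}_{w,v+5}))$.
\end{itemize}
\end{itemize}
This isolates the root-of-unity corrections, namely the appearance of $\chi_\varepsilon(L(X^{(\ell)}))$ and the extra correction terms, in a few two-factor deformations of the generic $T$-system. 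A dominant-monomial check is manageable there, unlike in the product of two minimal affinizations with $4\ell-4$ paths.
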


\begin{proof}
(1) For $U_{\varepsilon}^{\res}({L\mathfrak{sl}_{3}})$ with $0\leq u \leq \ell-3$, according to \cite[Lemma 5.6]{ALLZ25}, we deduce that
\begin{align*}
& \chi_\varepsilon(L(X^{(u+1)}_{i,k}))=\chi_q(L(X^{(u+1)}_{i,k}))|_{q=\varepsilon},\,\,
 \chi_\varepsilon(L(X^{(u+1)}_{i,k+2}))=\chi_q(L(X^{(u+1)}_{i,k+2}))|_{q=\varepsilon},\\
& \chi_\varepsilon(L(X^{(u+2)}_{i,k}))=\chi_q(L(X^{(u+2)}_{i,k}))|_{q=\varepsilon},\,\,
 \chi_\varepsilon(L(X^{(u)}_{i,k+2}))=\chi_q(L(X^{(u)}_{i,k+2}))|_{q=\varepsilon},\\
& \chi_\varepsilon(L(X^{(u+1)}_{j,k+1}))=\chi_q(L(X^{(u+1)}_{j,k+1}))|_{q=\varepsilon}.
\end{align*}
According to the T-system of $U_{q}({L\mathfrak{sl}_{3}})$, we have the following identity of $q$-characters
\begin{align*}
&\chi_q(L(X^{(u+1)}_{i,k})) \chi_q(L(X^{(u+1)}_{i,k+2}))=\chi_q(L(X^{(u+2)}_{i,k})) \chi_q(L(X^{(u)}_{i,k+2}))+\chi_q(L(X^{(u+1)}_{j,k+1})).
\end{align*}
It follows that after the specialization $q=\varepsilon$, we have 
{\small
\begin{align*}
&\chi_q(L(X^{(u+1)}_{i,k}))|_{q=\varepsilon}\, \chi_q(L(X^{(u+1)}_{i,k+2}))|_{q=\varepsilon}=\chi_q(L(X^{(u+2)}_{i,k}))| _{q=\varepsilon}\,\chi_q(L(X^{(u)}_{i,k+2}))|_{q=\varepsilon}+\chi_q(L(X^{(u+1)}_{j,k+1}))|_{q=\varepsilon}.
\end{align*}}
Therefore, the conclusion is valid.

(2) We prove the case for $i=1$, $j=2$, and $k=2$. The proofs for the other cases are similar. The module $L(M^{2,1,\ell-1}_{1,2,\ell-1})$ can be converted into a minimal affinization $L(M^{2,2 \ell+1,\ell-1}_{1,2,\ell-1})$. 
To prove 
\begin{align*}
\chi_\varepsilon(L(X^{(\ell-1)}_{1,2}))\chi_\varepsilon(L(X^{(\ell-1)}_{2,1}))=\chi_\varepsilon(L(M^{2,1,\ell-1}_{1,2,\ell-1}))
+\chi_\varepsilon(L(X^{(\ell-2)}_{1,2}))\chi_\varepsilon(L(X^{(\ell-2)}_{2,3})),
\end{align*}
it suffices to prove
\[
\chi_\varepsilon(L(X^{(\ell-1)}_{1,2}))\chi_\varepsilon(L(X^{(\ell-1)}_{2,2\ell+1}))=\chi_\varepsilon(L(M^{2,2 \ell+1,\ell-1}_{1,2,\ell-1}))
+\chi_\varepsilon(L(X^{(\ell-2)}_{1,2}))\chi_\varepsilon(L(X^{(\ell-2)}_{2,2\ell+3})).
\]
By the proof of (1), for $t=\ell-2, \ell-1$, we have 
\[
\chi_\varepsilon(L(X^{(t)}_{1,2}))=\chi_q(L(X^{(t)}_{1,2}))|_{q=\varepsilon}, \quad
\chi_\varepsilon(L(X^{(t)}_{2,2 \ell+1}))=\chi_q(L(X^{(t)}_{2,2 \ell+1}))|_{q=\varepsilon}.
\]
By Lemma \ref{Lem: minimal affine is special}, we have shown that
\[
\chi_\varepsilon(L(M^{2,2 \ell+1,\ell-1}_{1,2,\ell-1}))=\chi_q(L(M^{2,2 \ell+1,\ell-1}_{1,2,\ell-1}))|_{q=\varepsilon}.
\] 
For $U_{q}({L\mathfrak{sl}_{3}})$, the following identity of $q$-characters
\[
\chi_q(L(X^{(\ell-1)}_{1,2}))\,\chi_q(L(X^{(\ell-1)}_{2,2\ell+1}))=\chi_q(L(M^{2,2\ell+1,\ell-1}_{1,2,\ell-1}))
+\chi_q(L(X^{(\ell-2)}_{1,2}))\,\chi_q(L(X^{(\ell-2)}_{2,2\ell+3}))
\]
holds.
It follows that after the specialization $q=\varepsilon$, we have 
{\small\[
\chi_q(L(X^{(\ell-1)}_{1,2}))|_{q=\varepsilon}\,\chi_q(L(X^{(\ell-1)}_{2,2\ell+1}))|_{q=\varepsilon}=\chi_q(L(M^{2,2 \ell+1,\ell-1}_{1,2,\ell-1}))|_{q=\varepsilon}
+\chi_q(L(X^{(\ell-2)}_{1,2}))|_{q=\varepsilon}\,\chi_q(L(X^{(\ell-2)}_{2,2 \ell+3}))|_{q=\varepsilon}.
\]}
Therefore, the conclusion is valid.

(3) We prove the case $i=1$, $j=2$, and $k=2$; the proofs for the other cases are analogous. In this case, it suffices to prove the following identity:
\begin{align}\label{Eq:first equivent}
\chi_\varepsilon(L(M^{2,3,\ell-1}_{1,2,\ell-1}))\chi_\varepsilon(L(M^{2,1,\ell-1}_{1,2,\ell-1})) 
&=\chi_\varepsilon(L(X^{(\ell-1)}_{1,2}))^3+\chi_\varepsilon(L(X^{(\ell)}_{2,1})) \chi_\varepsilon(L(X^{(\ell-1)}_{1,2}))^2\chi_\varepsilon(L(X^{(\ell-2)}_{2,3}))\nonumber\\
&+\chi_\varepsilon(L(X^{(\ell)}_{1,2}))\chi_\varepsilon(L(X^{(\ell-1)}_{1,2}))\chi_\varepsilon(L(X^{(\ell-2)}_{2,3}))^2+\chi_\varepsilon(L(X^{(\ell-2)}_{2,3}))^3.
\end{align}
Applying Equation (2), we obtain 
\begin{align}
\chi_\varepsilon(L(M^{2,1,\ell-1}_{1,2,\ell-1}))&=\chi_\varepsilon(L(X^{(\ell-1)}_{1,2}))\,\chi_\varepsilon(L(X^{(\ell-1)}_{2,1}))-\chi_\varepsilon(L(X^{(\ell-2)}_{1,2}))\,\chi_\varepsilon(L(X^{(\ell-2)}_{2,3})),\label{Eq:second identity 1}\\
\chi_\varepsilon(L(M^{2,3,\ell-1}_{1,2,\ell-1}))&=\chi_\varepsilon(L(M^{1,2,\ell-1}_{2,3,\ell-1}))\nonumber\\
&=\chi_\varepsilon(L(X^{(\ell-1)}_{2,3}))\chi_\varepsilon(L(X^{(\ell-1)}_{1,2}))-\chi_\varepsilon(L(X^{(\ell-2)}_{2,3}))\chi_\varepsilon(L(X^{(\ell-2)}_{1,4})).\quad\quad\quad\label{Eq:second identity 2}
\end{align}
Substituting Equations (\ref{Eq:second identity 1}) and (\ref{Eq:second identity 2}) into (\ref{Eq:first equivent}), we obtain
{\small
\begin{align}\label{Eq:third identity}
\chi_\varepsilon(L(M^{2,3,\ell-1}_{1,2,\ell-1}))\,\chi_\varepsilon(L(M^{2,1,\ell-1}_{1,2,\ell-1}))&=\chi_\varepsilon(L(X^{(\ell-1)}_{1,2}))^2\chi_\varepsilon(L(X^{(\ell-1)}_{2,1}))\chi_\varepsilon(L(X^{(\ell-1)}_{2,3}))\nonumber\\
&+\chi_\varepsilon(L(X^{(\ell-2)}_{1,2}))\chi_\varepsilon(L(X^{(\ell-2)}_{1,4}))\chi_\varepsilon(L(X^{(\ell-2)}_{2,3}))^2\nonumber\\
&-\chi_\varepsilon(L(X^{(\ell-1)}_{1,2}))\chi_\varepsilon(L(X^{(\ell-1)}_{2,1}))\chi_\varepsilon(L(X^{(\ell-2)}_{2,3}))\chi_\varepsilon(L(X^{(\ell-2)}_{1,4}))\nonumber\\
&-\chi_\varepsilon(L(X^{(\ell-1)}_{2,3}))\,\chi_\varepsilon(L(X^{(\ell-1)}_{1,2}))\,\chi_\varepsilon(L(X^{(\ell-2)}_{2,3}))\,\chi_\varepsilon(L(X^{(\ell-2)}_{1,2})).
\end{align}}
By direct computation and induction, we obtain 
\begin{align*}
\chi_\varepsilon(L(X^{(\ell-1)}_{2,1}))\chi_\varepsilon(L(X^{(\ell-1)}_{2,3}))=\chi_\varepsilon(L(X^{(\ell-2)}_{2,3}))\left(\chi_\varepsilon(L(X^{(\ell)}_{2,1}))+\chi_\varepsilon(L(M^{2,3,\ell-2}_{1,0,1}))\right)+\chi_\varepsilon(L(X^{(\ell-1)}_{1,2})),\\
\chi_\varepsilon(L(X^{(\ell-1)}_{1,2})) \chi_\varepsilon(L(X^{(1)}_{1,0}))=\chi_\varepsilon(L(X^{(\ell)}_{1,0}))+\chi_\varepsilon(L(M^{1,4,\ell-2}_{2,1,1}))+\chi_\varepsilon(L(M^{2,2 \ell-1,1}_{1,2,\ell-2}))+\chi_\varepsilon(L(X^{(\ell-3)}_{1,4})),
\end{align*}
and for $(u,v)\in \mathcal{X}$, $w\in I$ with $u\neq w$, we have
\begin{align}\label{Eq:minimai decomposition}
\chi_\varepsilon(L(M^{w,v+3,\ell-2}_{u,v,1}))=\chi_\varepsilon(L(X^{(1)}_{u,v}))\chi_\varepsilon(L(X^{(\ell-2)}_{w,v+3}))-\chi_\varepsilon(L(X^{(\ell-3)}_{w,v+3}))-\chi_\varepsilon(L(X^{(\ell-3)}_{w,v+5})).
\end{align} 
Applying Equation (1), we obtain 
\[
\chi_\varepsilon(L(X^{(\ell-2)}_{1,2}))\chi_\varepsilon(L(X^{(\ell-2)}_{1,4}))=\chi_\varepsilon(L(X^{(\ell-1)}_{1,2}))\chi_\varepsilon(L(X^{(\ell-3)}_{1,4}))+\chi_\varepsilon(L(X^{(\ell-2)}_{2,3})).
\]
Substituting the above expressions into the right-hand of Equation (\ref{Eq:third identity}) and combining with (\ref{Eq:first equivent}), it remains to prove that
{\small
\begin{align}\label{Eq:finial identity}
&\chi_\varepsilon(L(X^{(\ell-2)}_{2,3}))\,\chi_\varepsilon(L(M^{1,4,\ell-2}_{2,1,1}))+\chi_\varepsilon(L(X^{(\ell-2)}_{2,3}))\,\chi_\varepsilon(L(M^{2,2 \ell-1,1}_{1,2,\ell-2}))-\chi_\varepsilon(L(X^{(\ell-1)}_{1,2}))\,\chi_\varepsilon(L(X^{(\ell-3)}_{2,3}))\nonumber \\
&-\chi_\varepsilon(L(X^{(\ell-1)}_{1,2}))\,\chi_\varepsilon(L(X^{(\ell-3)}_{2,5}))
+2\chi_\varepsilon(L(X^{(\ell-3)}_{1,4}))\,\chi_\varepsilon(L(X^{(\ell-2)}_{2,3})) -\chi_\varepsilon(L(X^{(\ell-1)}_{2,1}))\,\chi_\varepsilon(L(X^{(\ell-2)}_{1,4}))\nonumber\\
&-\chi_\varepsilon(L(X^{(\ell-1)}_{2,3}))\,\chi_\varepsilon(L(X^{(\ell-2)}_{1,2}))=0.
\end{align}}
Substituting Equations (1) and (\ref{Eq:minimai decomposition}) into (\ref{Eq:finial identity}) again, we obtain that (\ref{Eq:finial identity}) holds. Thus, Equation (3) is valid.
\end{proof}

Using the same methods as in Lemma \ref{Lem:exchange relations}, we obtain the following.
\begin{lemma}\label{lem: identities hold of type A2}
For $U_{\varepsilon}^{\res}(L\mathfrak{sl}_3)$, let $\varepsilon^{2\ell} = 1$ with $\ell \in \mathbb{Z}_{\ge 3}$. Then the following identities hold:  
{\small
\begin{align}
\chi_\varepsilon(L(X^{(t)}_{2,3}))\,\chi_\varepsilon(L(M^{2,2\ell+3,t-1}_{1,4,\ell-1}))
&=\chi_\varepsilon(L(X^{(t-1)}_{2,3}))\,\chi_\varepsilon(L(M^{2,2\ell+3,t}_{1,4,\ell-1}))\nonumber \\ 
&+\chi_\varepsilon(L(X^{(\ell-2)}_{1,4}))\,\chi_\varepsilon(L(X^{(t-1)}_{1,4})), \quad t\in[1,\ell-2];
\end{align}}

{\small
\begin{align}
\chi_\varepsilon(L(X^{(\ell-1)}_{2,3}))
\,\chi_\varepsilon(L(M^{2,2\ell+3,\ell-2}_{1,4,\ell-1}))=\chi_\varepsilon(L(M^{2,2\ell+3,\ell-1}_{1,4,\ell-1}))
\,\chi_\varepsilon(L(X^{(\ell-2)}_{2,3}))+\chi_\varepsilon(L((X^{(\ell-2)}_{1,4})^2));
\end{align}}

{\small
\begin{align}
\chi_\varepsilon(L(X^{(\ell-2)}_{1,4}))\,\chi_\varepsilon(L(M^{2,3,\ell-1}_{1,0,1}X^{(\ell-2)}_{2,3}))
&=\chi_\varepsilon(L(M^{2,2\ell+3,\ell-1}_{1,4,\ell-1}))\,\chi_\varepsilon(L(X^{(\ell-2)}_{2,3}))\nonumber\\ 
&+\chi_\varepsilon(L((X^{(\ell-1)}_{2,3})^2))\,\chi_\varepsilon(L(X^{(\ell-3)}_{1,4}));
\end{align}}

{\small
\begin{align}
\chi_\varepsilon(L(X^{(\ell-j+1)}_{2,3}))\,\chi_\varepsilon(L(M^{2,2\ell-2j+5,1}_{1,4,\ell-j}))
&=\chi_\varepsilon(L(X^{(\ell-j+2)}_{2,3}))\,\chi_\varepsilon(L(X^{(\ell-j)}_{1,4}))\nonumber\\
&+\chi_\varepsilon(L(X^{(\ell-j)}_{2,3}))\,\chi_\varepsilon(L(X^{(\ell-j+1)}_{1,4})),\quad j\in[3,\ell-1];
\end{align}}

{\small
\begin{align}
\chi_\varepsilon(L(X^{(2\ell-j-1)}_{1,4}))\,\chi_\varepsilon(L(M^{1,4\ell-2j+2,1}_{2,3,2\ell-j-1}))
&=\chi_\varepsilon(L(X^{(2\ell-j)}_{1,4}))\,\chi_\varepsilon(L(X^{(2\ell-j-1)}_{2,3}))\nonumber\\
&+\chi_\varepsilon(L(X^{(2\ell-j-2)}_{1,4}))\,\chi_\varepsilon(L(X^{(2\ell-j)}_{2,3})),\quad
j\in[\ell+2,2\ell-2].
\end{align}}
\end{lemma}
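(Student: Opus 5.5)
The plan is to follow the pattern of Lemma~\ref{Lem:exchange relations}: reduce each identity of $\varepsilon$-characters to an identity of $q$-characters of $U_q(L\mathfrak{sl}_3)$ that holds generically, and then specialize at $q=\varepsilon$. If some factor is not the specialization of a $q$-character (for instance a non-real Kirillov--Reshetikhin factor of level $\ell$), the plan instead rewrites it through relations (1)--(3) of Lemma~\ref{Lem:exchange relations} and Equation~(\ref{Eq:minimai decomposition}) and checks the resulting polynomial identity in Kirillov--Reshetikhin characters.

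\textbf{Step 1 (identities 1, 2, 4, 5).} By \cite[Theorem 1.1]{ALLZ25}, rewrite every module as a snake module with second indices chosen in a fixed window, using $Y_{i,s}=Y_{i,s+2\ell}$. For example, $M^{2,2\ell+3,t}_{1,4,\ell-1}$ is already a minimal affinization by Proposition~\ref{Prop:minimal affine module}, and $M^{2,2\ell-2j+5,1}_{1,4,\ell-j}$ becomes a minimal snake after such a shift. Then show that each module of level at most $\ell-1$ in each factor satisfies $\chi_\varepsilon(L(m))=\chi_q(L(m))|_{q=\varepsilon}$:
\begin{itemize}
\item for Kirillov--Reshetikhin modules this is \cite[Lemma 5.6]{ALLZ25};
\item for the two-row snakes $M^{j,v,t}_{i,k,s}$ with $s,t\le \ell-1$, redo the three-case path analysis of Lemma~\ref{Lem: minimal affine is special}. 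One shows that every dominant monomial in $R_{(i,k,s)(j,v,t)}$ other than the highest one either fails to be dominant or has a lowest weight monomial not lying in $R$.
\end{itemize}
Next, identity 1 and identities 4 and 5 are instances of the generic $q$-character three-term relations for minimal affinizations of $U_q(L\mathfrak{sl}_3)$ (the extended T-system of Mukhin--Young). These relations are $\chi_q(X)\chi_q(Y)=\chi_q(Z)\chi_q(W)+\chi_q(U)\chi_q(V)$, with all six modules snake modules. Where a snake lies outside the window, such as $X^{(\ell-j+2)}_{2,3}$ or $X^{(2\ell-j)}_{1,4}$, check first that its level stays at most $\ell-1$ in the stated ranges of $j$; otherwise treat it as in Step 2. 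Specializing at $q=\varepsilon$ then gives the identities.

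\textbf{Step 2 (identities 2 and 3).} These involve the non-generic modules $L((X^{(\ell-2)}_{1,4})^2)$, $L((X^{(\ell-1)}_{2,3})^2)$ and $L(M^{2,3,\ell-1}_{1,0,1}X^{(\ell-2)}_{2,3})$, whose highest monomials wrap around the tube.
\begin{itemize}
\item First use the reality statement in Theorem~\ref{Th: real KR module}. Its argument, comparing the lowest path of one factor with the highest path of the other, shows that $L((X^{(s)}_{i,k})^2)$ is the tensor square for $s\le\ell-1$, so its character is $\chi_\varepsilon(L(X^{(s)}_{i,k}))^2$.
\item For identity 3, decompose $L(M^{2,3,\ell-1}_{1,0,1}X^{(\ell-2)}_{2,3})$ by the same non-overlapping argument. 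This should split it as a product of a real snake module and a Kirillov--Reshetikhin module, or at least give its $\varepsilon$-character as an explicit combination of Kirillov--Reshetikhin characters, in the style of Equation~(\ref{Eq:minimai decomposition}).
\item Then substitute relation (1) of Lemma~\ref{Lem:exchange relations}, relation (2) for the $M_{1,4,\ell-1}$ terms, and identity 1 of the present lemma. Reduce both sides to polynomials in $\chi_\varepsilon(L(X^{(s)}_{i,k}))$ and compare them, exactly as in the verification of (\ref{Eq:finial identity}).
\end{itemize}

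\textbf{Main obstacle.} The hard part is identity 3, and to a lesser extent identity 2. There the product on the left is not a specialization of a generic T-system relation: the wrap-around creates extra dominant monomials, namely the ones that made level-$\ell$ modules non-real in Lemma~\ref{Lem: cofficients of $A_2$ are not real}. The first approach is to enumerate the dominant monomials in the product of path sums and match them with the highest monomials on the right. Failing that, the fallback is brute-force polynomial reduction via relations (1)--(3), as in the proof of Lemma~\ref{Lem:exchange relations}(3).
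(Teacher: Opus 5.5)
\textbf{Identities 1, 2, 4, 5.} Here your plan is essentially the paper's ``same methods as Lemma~\ref{Lem:exchange relations}''. You check $\chi_\varepsilon=\chi_q|_{q=\varepsilon}$ for each factor, specialize a generic three-term relation, and use reality of $L(X^{(\ell-2)}_{1,4})$. Identity 2 is just identity 1 at $t=\ell-1$ combined with that reality, so it is not an obstacle. Your blanket claim in Step 1 that $\chi_\varepsilon=\chi_q|_{q=\varepsilon}$ for every $M^{j,v,t}_{i,k,s}$ with $s,t\le\ell-1$ is false as stated: for example $M^{1,2s,t}_{1,0,s}=X^{(s+t)}_{1,0}$ fails when $s+t\ge\ell$. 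You only need it for the specific minimal affinizations that occur, and there it is fine.

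\textbf{The gap is identity 3.} Your Step 2 rests on splitting $L(M^{2,3,\ell-1}_{1,0,1}X^{(\ell-2)}_{2,3})=L(Y_{1,0}Y_{2,3}^2\cdots Y_{2,2\ell-3}^2Y_{2,2\ell-1})$ by a non-overlapping argument. The natural splitting $L(M^{2,3,\ell-1}_{1,0,1})\otimes L(X^{(\ell-2)}_{2,3})$ is false. Take $\ell=3$ and $m=Y_{1,0}Y_{2,3}^2Y_{2,5}$.
\begin{itemize}
\item Lift $m$ to the snake $Y_{2,3}Y_{1,6}Y_{2,9}Y_{2,11}$. The Mukhin--Young path formula gives $39$ monomials, so $\dim L(m)\le 39$.
\item Only the highest monomial of $\chi_q(L(Y_{1,0}Y_{2,3}Y_{2,5}))$ stays dominant at $q=\varepsilon$. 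Hence $L(Y_{1,0}Y_{2,3}Y_{2,5})\otimes L(Y_{2,3})$ has dimension $15\cdot 3=45$.
\item This product contains the extra dominant monomial $Y_{1,0}Y_{2,3}Y_{2,5}\cdot Y_{1,6}^{-1}=Y_{2,3}Y_{2,5}$, since $Y_{1,6}=Y_{1,0}$.
\end{itemize}
So the non-overlapping argument is exactly what breaks. It shows the relevant \emph{generic} tensor product is simple, but that simplicity does not survive the specialization. Your fallbacks do not repair this. None of relations (1)--(3) of Lemma~\ref{Lem:exchange relations} or (\ref{Eq:minimai decomposition}) involves this module, so ``brute-force reduction'' has nothing to substitute. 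Enumerating dominant monomials on the left also presupposes a description of $\chi_\varepsilon(L(m))$ that you have not produced.

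\textbf{The missing idea is the correct lift.} Using $Y_{i,s}=Y_{i,s+2\ell}$, write the monomial as the \emph{non-monotone prime} minimal snake $m''=X^{(\ell-2)}_{2,3}\,Y_{1,2\ell}\,X^{(\ell-1)}_{2,2\ell+3}$. Lift $x_2^2$ to $X^{(\ell-1)}_{2,3}X^{(\ell-1)}_{2,2\ell+3}$. Identity 3 is then the $q=\varepsilon$ specialization of the generic relation
\[
\chi_q(X^{(\ell-2)}_{1,4})\,\chi_q(L(m''))=\chi_q(L(M^{2,2\ell+3,\ell-1}_{1,4,\ell-1}))\,\chi_q(X^{(\ell-2)}_{2,3})+\chi_q(X^{(\ell-3)}_{1,4})\,\chi_q(X^{(\ell-1)}_{2,3})\,\chi_q(X^{(\ell-1)}_{2,2\ell+3}),
\]
together with reality of $L(X^{(\ell-1)}_{2,3})$ (Theorem~\ref{Th: real KR module}). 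For $\ell=3$ both sides have $117$ monomials and the same $\mathfrak{sl}_3$-decomposition. So your diagnosis is off: no level-$\ell$ module occurs, and identity 3 is a specialization once the lift is chosen correctly.

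What remains is a path analysis in the style of Lemma~\ref{Lem: minimal affine is special} showing $\chi_q(L(m''))|_{q=\varepsilon}=\chi_\varepsilon(L(m))$. This is not a ``no extra dominant monomials'' statement. Already for $\ell=3$ the specialization contains $Y_{1,0}Y_{1,4}Y_{2,3}=m\,A_{2,4}^{-1}$. You must show it lies inside $\chi_\varepsilon(L(m))$ rather than starting a new composition factor. It does: restrict to node $2$, where the $\mathfrak{sl}_2$-module with highest monomial $Y_{2,3}^2Y_{2,5}$ contains the monomial $Y_{2,3}$.
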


In the following, we present an example to illustrate the mutation sequence introduced in Theorem \ref{Th:KR mutation sequence}.
\begin{example}
Let $\varepsilon^{2 \ell}=1$ with $\ell\geq 3$. We set $\ell=3$. The initial cluster $\mathbf{x}=\{x_1, x_2, x_3, x_4\}$, where $x_1=[L(Y_{1,4}Y_{1,6}Y_{2,3}Y_{2,5})]$, $x_2=[L(Y_{2,3}Y_{2,5})]$, $x_3=[L(Y_{2,3})]$, $x_4=[L(Y_{1,4})]$. The mutation sequence is 
\[
\mu_3\mu_2\mu_1\mu_4 \mu_2\mu_1\ldots \mu_3\mu_2\mu_1\mu_4\mu_2\mu_1(\mathbf{x}),
\]
where $\mu_i$ represents the mutation at the $i$-th vertex. For convenience, we set $\boldsymbol{\mu}_a:= \mu_4\mu_2\mu_1$, $\boldsymbol{\mu}_b:= \mu_3\mu_2\mu_1$. By performing these mutations, we obtain
\begin{align*}
&\textit{initial cluster $\mathbf{x}$}:  &[L(Y_{1,4}Y_{1,6}Y_{2,3}Y_{2,5})], \quad\quad  &[L(Y_{2,3}Y_{2,5})],\quad\quad  &[L(Y_{2,3})], \quad\quad &[L(Y_{1,4})];\\
&\mu_1(\mathbf{x}):  &[L(Y_{1,2}Y_{1,4}Y_{2,3}Y_{2,5})], \quad\quad &[L(Y_{2,3}Y_{2,5})],  \quad\quad  &[L(Y_{2,3})], \quad\quad &[L(Y_{1,4})];\\
&\mu_2\mu_1(\mathbf{x}):  &[L(Y_{1,2}Y_{1,4}Y_{2,3}Y_{2,5})], \quad\quad &[L(Y_{1,2}Y_{1,4})],  \quad\quad  &[L(Y_{2,3})], \quad\quad &[L(Y_{1,4})];\\
&\boldsymbol{\mu}_a(\mathbf{x}):  &[L(Y_{1,2}Y_{1,4}Y_{2,3}Y_{2,5})], \quad\quad &[L(Y_{1,2}Y_{1,4})],  \quad\quad  &[L(Y_{2,3})], \quad\quad &[L(Y_{1,2})];\\
&\mu_1\boldsymbol{\mu}_a(\mathbf{x}):  &[L(Y_{1,2}Y_{1,4}Y_{2,1}Y_{2,3})], \quad\quad &[L(Y_{1,2}Y_{1,4})],  \quad\quad  &[L(Y_{2,3})], \quad\quad &[L(Y_{1,2})];\\
&\mu_2\mu_1\boldsymbol{\mu}_a(\mathbf{x}):  &[L(Y_{1,2}Y_{1,4}Y_{2,1}Y_{2,3})], \quad\quad &[L(Y_{2,1}Y_{2,3})],  \quad\quad  &[L(Y_{2,3})], \quad\quad &[L(Y_{1,2})];\\
&\boldsymbol{\mu}_b\boldsymbol{\mu}_a(\mathbf{x}):  &[L(Y_{1,2}Y_{1,4}Y_{2,1}Y_{2,3})], \quad\quad &[L(Y_{2,1}Y_{2,3})],  \quad\quad  &[L(Y_{2,1})], \quad\quad &[L(Y_{1,2})];\\
&\mu_1\boldsymbol{\mu}_b\boldsymbol{\mu}_a(\mathbf{x}):  &[L(Y_{1,0}Y_{1,2}Y_{2,1}Y_{2,3})], \quad\quad &[L(Y_{2,1}Y_{2,3})],  \quad\quad  &[L(Y_{2,1})], \quad\quad &[L(Y_{1,2})];\\
&\mu_2\mu_1\boldsymbol{\mu}_b\boldsymbol{\mu}_a(\mathbf{x}):  &[L(Y_{1,0}Y_{1,2}Y_{2,1}Y_{2,3})], \quad\quad &[L(Y_{1,0}Y_{1,2})],  \quad\quad  &[L(Y_{2,1})], \quad\quad &[L(Y_{1,2})];\\
&\boldsymbol{\mu}_a\boldsymbol{\mu}_b\boldsymbol{\mu}_a(\mathbf{x}):  &[L(Y_{1,0}Y_{1,2}Y_{2,1}Y_{2,3})], \quad\quad &[L(Y_{1,0}Y_{1,2})],  \quad\quad  &[L(Y_{2,1})], \quad\quad &[L(Y_{1,0})];\\
&\mu_1\boldsymbol{\mu}_a\boldsymbol{\mu}_b\boldsymbol{\mu}_a(\mathbf{x}):  &[L(Y_{1,0}Y_{1,2}Y_{2,5}Y_{2,7})], \quad\quad &[L(Y_{1,0}Y_{1,2})],  \quad\quad  &[L(Y_{2,1})], \quad\quad &[L(Y_{1,0})];\\
&\mu_2\mu_1\boldsymbol{\mu}_a\boldsymbol{\mu}_b\boldsymbol{\mu}_a(\mathbf{x}):  &[L(Y_{1,0}Y_{1,2}Y_{2,5}Y_{2,7})], \quad\quad &[L(Y_{2,5}Y_{2,7})],  \quad\quad  &[L(Y_{2,1})], \quad\quad &[L(Y_{1,0})];\\
&\boldsymbol{\mu}_b\boldsymbol{\mu}_a\boldsymbol{\mu}_b\boldsymbol{\mu}_a(\mathbf{x}):  &[L(Y_{1,0}Y_{1,2}Y_{2,5}Y_{2,7})], \quad\quad &[L(Y_{2,5}Y_{2,7})],  \quad\quad  &[L(Y_{2,5})], \quad\quad &[L(Y_{1,0})];\\
&\mu_1\boldsymbol{\mu}_b\boldsymbol{\mu}_a\boldsymbol{\mu}_b\boldsymbol{\mu}_a(\mathbf{x}):  &[L(Y_{1,4}Y_{1,6}Y_{2,5}Y_{2,7})], \quad\quad &[L(Y_{2,5}Y_{2,7})],  \quad\quad  &[L(Y_{2,5})], \quad\quad &[L(Y_{1,0})];\\
&\mu_2\mu_1\boldsymbol{\mu}_b\boldsymbol{\mu}_a\boldsymbol{\mu}_b\boldsymbol{\mu}_a(\mathbf{x}):  &[L(Y_{1,4}Y_{1,6}Y_{2,5}Y_{2,7})], \quad\quad &[L(Y_{1,4}Y_{1,6})],  \quad\quad  &[L(Y_{2,5})], \quad\quad &[L(Y_{1,0})];\\
&\boldsymbol{\mu}_a\boldsymbol{\mu}_b\boldsymbol{\mu}_a\boldsymbol{\mu}_b\boldsymbol{\mu}_a(\mathbf{x}):  &[L(Y_{1,4}Y_{1,6}Y_{2,5}Y_{2,7})], \quad\quad &[L(Y_{1,4}Y_{1,6})],  \quad\quad  &[L(Y_{2,5})], \quad\quad &[L(Y_{1,4})].
\end{align*}
All real Kirillov--Reshetikhin modules of $U_\varepsilon^\res({L\mathfrak{sl}_{3}})$ are obtained. 
Continuing the mutations, we obtain the following:
\begin{align*}
&\mu_1\boldsymbol{\mu}_a\boldsymbol{\mu}_b\boldsymbol{\mu}_a\boldsymbol{\mu}_b\boldsymbol{\mu}_a(\mathbf{x}):  &[L(Y_{1,4}Y_{1,6}Y_{2,3}Y_{2,5})], \quad\quad &[L(Y_{1,4}Y_{1,6})],\quad\quad  &[L(Y_{2,5})], \quad\quad  &[L(Y_{1,4})];\\
&\mu_2\mu_1\boldsymbol{\mu}_a\boldsymbol{\mu}_b\boldsymbol{\mu}_a\boldsymbol{\mu}_b\boldsymbol{\mu}_a(\mathbf{x}):  &[L(Y_{1,4}Y_{1,6}Y_{2,3}Y_{2,5})], \quad\quad &[L(Y_{2,3}Y_{2,5})],\quad\quad  &[L(Y_{2,5})], \quad\quad  &[L(Y_{1,4})];\\
&\boldsymbol{\mu}_b\boldsymbol{\mu}_a\boldsymbol{\mu}_b\boldsymbol{\mu}_a\boldsymbol{\mu}_b\boldsymbol{\mu}_a(\mathbf{x}):  &[L(Y_{1,4}Y_{1,6}Y_{2,3}Y_{2,5})], \quad\quad &[L(Y_{2,3}Y_{2,5})],\quad\quad  &[L(Y_{2,3})], \quad\quad  &[L(Y_{1,4})].
\end{align*}
The last one is precisely the initial cluster.
\end{example}

Finally, we prove Theorem \ref{Th:KR mutation sequence}. According to the three exchange relations in Lemma \ref{Lem:exchange relations}, a mutation at the cluster variable corresponding to a minimal affinization 
$L(M^{j,k+1,\ell-1}_{i,k,\ell-1})$
yields a cluster variable corresponding to the minimal affinization
$L(M^{j,k-1,\ell-1}_{i,k,\ell-1})$
of $U_{\varepsilon}^{\res}(L\mathfrak{sl}_3)$, where $\{i,j\}=\{1,2\}$ and $k\in\mathbb{Z}$.
Similarly, mutation at a cluster variable corresponding to a Kirillov--Reshetikhin module produces a cluster variable corresponding to another Kirillov--Reshetikhin module of $U_{\varepsilon}^{\res}(L\mathfrak{sl}_3)$. Hence, the mutations prescribed by the exchange relations in Lemma \ref{Lem:exchange relations} preserve the classes of minimal affinizations and Kirillov--Reshetikhin modules.

By performing the mutations according to the prescribed mutation sequences in Theorem \ref{Th:KR mutation sequence}, one verifies that the resulting cluster returns to the initial cluster after finitely many steps. Hence the mutation sequence is periodic. This completes the proof of Theorem \ref{Th:KR mutation sequence}.

\subsection{A proof of the first part of Gleitz' conjecture}\label{subsec: proof of isomorphism theorem}

\begin{theorem}\label{Th:isomorphic}
The generalized cluster algebra $\mathcal{A}$ defined in Definition \ref{Def:generalized cluster algebras of $A_2$} is isomorphic to $\mathcal{K}_0(\Rep U_{\varepsilon}^{\res}({L\mathfrak{sl}_{3}}))$.
\end{theorem}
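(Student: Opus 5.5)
The approach is to realize $\mathcal{A}$ inside the image of the $\varepsilon$-character map and then compare both sides with a common polynomial ring. Here $\mathcal{K}_0$ is understood as the Grothendieck ring of $\mathcal{C}_{\varepsilon,\xi}$, as in the introduction, and the coefficients $\rho_{1,1},\rho_{1,2}$ are the classes of the Frobenius pullbacks $L(\mathbf{Y}_{1,a})$ and $L(\mathbf{Y}_{2,a})$. Since $\chi_\varepsilon$ is an injective ring homomorphism by Frenkel--Mukhin, we may work throughout with $\varepsilon$-characters. We define $\iota:\mathcal{A}\to \mathcal{K}_0$ on the initial seed by sending $x_i$ to the classes listed in Definition~\ref{Def:generalized cluster algebras of $A_2$} and $\rho_{1,r}$ to the classes above. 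By the Laurent phenomenon, $\mathcal{A}$ sits inside $\mathbb{Z}\mathbb{P}[x_1^{\pm1},\dots,x_{2\ell-2}^{\pm1}]$. The initial variables are algebraically independent: their highest monomials are multiplicatively independent in $\mathcal{P}_{\varepsilon,\xi}$, which one checks by a triangularity argument on the dominant monomials. Hence $\iota$ extends to the fraction field. The first goal is to show that every cluster variable of $\mathcal{A}$ is sent into $\mathcal{K}_0$.

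We use Theorem~\ref{Th:KR mutation sequence} together with Lemmas~\ref{Lem:exchange relations} and \ref{lem: identities hold of type A2}. Along the alternating block mutation sequence, each exchange relation of $\mathcal{A}$ is matched with one of the proved $\varepsilon$-character identities. The degree-$3$ relation at $v_1$ is Lemma~\ref{Lem:exchange relations}(3), whose middle coefficients are exactly $\chi_\varepsilon(L(X^{(\ell)}_{j,k-1}))$ and $\chi_\varepsilon(L(X^{(\ell)}_{i,k}))$, i.e.\ $\rho_{1,1},\rho_{1,2}$. The remaining relations are the T-system type identities (1), (2) and those of Lemma~\ref{lem: identities hold of type A2}. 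Consequently every cluster variable reached along this sequence is the class of a simple module, in particular of every fundamental module $L(Y_{i,s})$, $s\in[0,2\ell-1]$, with the correct parity, and of each frozen-type coefficient $L(\mathbf{Y}_{i,a})$. For surjectivity, the simple modules $L(Y_{i,s})$ together with the Frobenius pullbacks generate $\mathcal{K}_0(\mathcal{C}_{\varepsilon,\xi})$. This holds because classes of simples are unitriangular, with respect to the partial order $\le$, in products of fundamental classes, combined with the tensor decomposition of Theorem~\ref{Th:decomposition}. Hence $\mathcal{K}_0\subseteq\iota(\mathcal{A})$.

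For the reverse inclusion $\iota(\mathcal{A})\subseteq\mathcal{K}_0$ one would need every cluster variable, not only those in the distinguished sequence, to lie in $\mathcal{K}_0$. Here I would use the screening description: the image of $\chi_\varepsilon$ restricted to $\mathcal{C}_{\varepsilon,\xi}$ equals $\bigcap_{i}\ker S_i$ intersected with the Laurent ring in the $Y_{i,s}$ (as in Frenkel--Mukhin). I would then show that $\iota(\mathcal{A}^{\rm up})\subseteq\bigcap_i \ker S_i$. The idea is to choose, for each $i$, a seed reached by Theorem~\ref{Th:KR mutation sequence} whose cluster consists of classes in $\mathcal{K}_0$ and in which the variables not involving the $i$-th direction behave well, and to argue as in Hernandez--Leclerc that a Laurent polynomial in such a cluster which is also regular after one mutation lies in $\ker S_i$. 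Combined with the previous paragraph, this gives $\mathcal{K}_0=\iota(\mathcal{A})$ (indeed $=\iota(\mathcal{A}^{\rm up})$). Injectivity of $\iota$ is then immediate from the algebraic independence of the initial cluster.

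The main obstacle is this kernel-of-screening step: controlling arbitrary elements of $\mathcal{A}$, or of $\mathcal{A}^{\rm up}$, rather than only the explicitly computed Kirillov--Reshetikhin and minimal affinization variables. The difficulty is sharpest at the root of unity, because the screening operators interact with the non-real classes $L(\mathbf{Y}_{i,a})$ that serve as coefficients. I would first try to reduce to the known $q$-generic statement via specialization $q\mapsto\varepsilon$, using Lemma~\ref{Lem: minimal affine is special} for the variables in the seeds involved. If that fails, I would fall back on a rank/dimension count, comparing graded pieces with respect to the $\mathcal{P}_{\varepsilon,\xi}$-grading via the tableau parametrization of Theorem~\ref{thm: isomorphic tableaux}.
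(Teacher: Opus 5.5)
\textbf{Overall.} Your first half agrees with the paper. $\mathcal{K}_0\subseteq\mathcal{A}$ holds because the alternating block mutations of Theorem~\ref{Th:KR mutation sequence} produce every fundamental class $[L(Y_{i,s})]$, and these classes generate $\mathcal{K}_0$ as a polynomial ring. One small correction: the coefficients $\rho_{1,1},\rho_{1,2}=[L(\mathbf{Y}_{i,a})]$ are not reached by mutation, and they are not even real. They lie in $\mathcal{A}$ from the start, though, so nothing is lost.

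\textbf{The gap.} You leave the reverse inclusion $\mathcal{A}\subseteq\mathcal{K}_0$ open as the ``main obstacle''. Choosing special seeds for each $i$ and asserting that a Laurent polynomial ``regular after one mutation'' lies in $\ker S_i$ is not yet an argument. Aiming at $\mathcal{A}^{\rm up}$ also makes this step harder than the theorem needs. Your fallbacks do not work as stated either:
\begin{itemize}
\item The cubic exchange relation at $v_1$, with coefficients $[L(\mathbf{Y}_{i,a})]$, has no counterpart in the generic-$q$ cluster structures, which are ordinary binomial cluster algebras without periodicity. So there is nothing to specialize from.
\item A dimension count would need compatible gradings with finite-rank pieces on both sides, and you do not specify any.
\end{itemize}

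\textbf{The missing idea.} A plain induction on the length of the mutation sequence, using only the Leibniz rule, suffices. Suppose $x$ replaces $y$ in a seed all of whose variables already lie in $\mathcal{K}_0=\bigcap_i\ker S_i$. Then $xy=P$, where either $P=M_1+M_2$ or $P=M_1^3+\rho_{1,1}M_1^2M_2+\rho_{1,2}M_1M_2^2+M_2^3$ (up to swapping the $\rho$'s). Here $M_1,M_2$ are monomials in the other cluster variables.
\begin{itemize}
\item Since $S_i$ is a derivation, $S_i(P)=0$.
\item Hence $xS_i(y)+yS_i(x)=yS_i(x)=0$, so $S_i(x)=0$.
\item $\mathcal{A}$ is generated by the cluster variables together with $\rho_{1,1},\rho_{1,2}$, so this yields $\mathcal{A}\subseteq\mathcal{K}_0$.
\end{itemize}
The upper algebra is treated separately in the paper, via the Starfish Lemma.

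\textbf{A caution if you write this out.} Applying $S_i$, and using the Frenkel--Mukhin equality $\operatorname{Im}\chi_\varepsilon=\bigcap_i\ker S_i$, requires knowing that $x=P/y$ lies in the Laurent ring $\mathbb{Z}[Y_{i,s}^{\pm1}]$. This would follow from the Laurent phenomenon if the initial variables were monomials in the $Y_{i,s}$, as in truncated settings. Here their $\varepsilon$-characters are not monomials, and the paper passes over this point.
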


\begin{proof}
Let $\Rep U_{\varepsilon}^{\res}({L\mathfrak{sl}_{3}})$ be the category of finite-dimensional $U_{\varepsilon}^{\res}({L\mathfrak{sl}_{3}})$-modules. By \cite{ FM02,FR98}, $\mathcal{K}_0(\Rep U_{\varepsilon}^{\res}({L\mathfrak{sl}_{3}}))$ is the polynomial ring in the classes of the fundamental modules of $\Rep U_{\varepsilon}^{\res}({L\mathfrak{sl}_{3}})$. Furthermore, by Theorem \ref{Th:KR mutation sequence}, the cluster algebra $\mathcal{A}$ contains all these fundamental $\varepsilon$-characters, which implies that $\mathcal{A}$ contains $\mathcal{K}_0(\Rep U_{\varepsilon}^{\res}({L\mathfrak{sl}_{3}}))$.

To prove the reverse inclusion, we will use a description of the image of the $\varepsilon$-character homomorphism as an intersection of kernels of screening operators. Similarly to \cite[Section 7.1]{FR98} and \cite [Section 5.1]{HL16}, for every $i\in I$, we have a linear operators $S_i$ from the ring $\ZZ[Y^{\pm 1}_{i,a}]^{a \in \CC^\times}_{i\in I}$ to a certain free module $\mathcal{Y}_i$ over this ring, which satisfies the Leibniz rule 
\[
S_i(xy)=xS_i(y)+yS_i(x), \quad\quad\quad x,y \in \ZZ[Y^{\pm 1}_{i,a}]^{a \in \CC^\times}_{i\in I}.
\]
Frenkel and Mukhin proved that the image of the $\varepsilon$-character homomorphism $\chi_\varepsilon$
equals the intersection of the kernels of the screening operators $S_i$, with $i\in I$, and
\[
\chi_\varepsilon: \Rep U_\varepsilon^\res({L\mathfrak{sl}_k})\longrightarrow \bigcap_{i\in I} \ker S_i
\]
is a ring isomorphism, see \cite[Proposition 3.6]{FM02}.

Let us consider a cluster variable $x$ of $\mathcal{A}$. By definition, $x$ is obtained from the initial cluster by a finite sequence of mutations $\boldsymbol{\mu}$. We want to show that $x$ belongs to $\mathcal{K}_0(\Rep U_{\varepsilon}^{\res}({L\mathfrak{sl}_{3}}))$. Since all cluster variables in the initial cluster belong to $\mathcal{K}_0(\Rep U_{\varepsilon}^{\res}(L\mathfrak{sl}_{3}))$, an induction on the length of the mutation sequence $\boldsymbol{\mu}$ shows that it suffices to consider the last exchange relation in $\boldsymbol{\mu}$. Using the exchange matrix of $\mathcal{A}$, we see that this relation is of one of the following forms:
\[
xy=M_1+M_2, \qquad \text{or} \qquad xy=M_1^3+aM_1^2M_2+bM_1M_2^2+M_2^3,
\]
where $y$, $M_1$, $M_2$, $a$, and $b$ are polynomials in the $\varepsilon$-characters of modules in $\Rep U_{\varepsilon}^{\res}(L\mathfrak{sl}_{3})$.

Assume that the last exchange relation of $\boldsymbol{\mu}$ is of the form $xy=M_1+M_2$, where $y, M_1, M_2$ are polynomials in the $\varepsilon$-characters of some modules of $\mathcal{K}_0(\Rep U_{\varepsilon}^{\res}({L\mathfrak{sl}_{3}}))$. Since $S_i$ is a derivation, we have 
\[
S_i(xy)=xS_i(y)+yS_i(x)=S_i(M_1)+S_i(M_2).
\]
Hence $S_i(x)=0$ because $S_i(y)=S_i(M_1)=S_i(M_2)=0$. 

Assume that the last exchange relation of $\boldsymbol{\mu}$ is of the form
$xy=M_1^3+aM_1^2M_2+bM_1M_2^2+M_2^3$,
where $y$, $M_1$, $M_2$, $a$, and $b$ are polynomials in the $\varepsilon$-characters of modules of $\mathcal{K}_0(\Rep U_{\varepsilon}^{\res}(L\mathfrak{sl}_{3}))$. Similarly, since $S_i$ is a derivation, we have 
\begin{align*}
S_i(xy)= xS_i(y)+yS_i(x)=&S_i(M_1)(3M^2_1+2aM_1M_2+bM^2_2)\\
 &+S_i(M_2)(3M^2_2+2bM_1M_2+aM^2_1)+M^2_1M_2S_i(a)+M_1M^2_2S_i(b).
\end{align*}
Hence $S_i(x)=0$ because $S_i(y)=S_i(M_1)=S_i(M_2)=S_i(a)=S_i(b)=0$. 

Thus, the cluster variable $x$ is annihilated by all the screening operators, so it is a polynomial in the $\varepsilon$-characters of modules of $\Rep U_{\varepsilon}^{\res}({L\mathfrak{sl}_{3}})$. Consequently, $x$ belongs to $\mathcal{K}_0(\Rep U_{\varepsilon}^{\res}({L\mathfrak{sl}_{3}}))$.
\end{proof}

In the following, we will show that $\mathcal{A}^{\rm up} \cong \mathcal{K}_0(\Rep U_{\varepsilon}^{\res}({L\mathfrak{sl}_{3}}))$. According to Remark 7.7 of \cite{Fra20}, it suffices to verify the assumptions of the Starfish Lemma \cite[Proposition 3.6]{FP16}. 
\begin{proposition}[Starfish Lemma {\cite[Proposition 3.6]{FP16}}]\label{Prop: Starfish Lemma}
Let $R$ be a finitely generated normal domain over $\mathbb{C}$, and let $\mathrm{QF}(R)$ denote the field of fractions of $R$.
Suppose that $(Q,\mathbf{z})$ is a seed in $\mathrm{QF}(R)$ satisfying the following conditions:
\begin{enumerate} 
    \item all elements of $\mathbf{z}$ belong to $R$;
    \item the cluster variables in $\mathbf{z}$ are pairwise coprime in $R$;
    \item for each cluster variable $z\in\mathbf{z}$, the seed mutation $\mu_z$ replaces $z$ with an element $z'$ such that $z'\in R$ and $z'$ is coprime to $z$.
\end{enumerate}
Then $\mathcal{A}^{\rm up}(Q,\mathbf{z}) \subseteq R$.
\end{proposition}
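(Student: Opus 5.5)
The plan is the standard height-one-prime argument of Fomin--Pylyavskyy. It carries over verbatim to the generalized setting, because it only uses the initial cluster and its $n$ neighbouring clusters, never the shape of the exchange polynomials. Write $\mathbf{z}=(z_1,\dots,z_n)$ and let $z_i'$ be the variable produced by $\mu_{z_i}$. By the definition \eqref{Eq: the definition of upper generalized cluster algebra},
\[
\mathcal{A}^{\rm up}(Q,\mathbf{z})\subseteq \mathcal{L}_0\cap\bigcap_{i=1}^n \mathcal{L}_i,
\qquad \mathcal{L}_0=R[z_1^{\pm1},\dots,z_n^{\pm1}],\quad
\mathcal{L}_i=R[z_1^{\pm1},\dots,(z_i')^{\pm1},\dots,z_n^{\pm1}].
\]
This inclusion uses that the coefficients $\rho_{i,r}$ of the exchange polynomials (after specialization) lie in $R$, which I will take as part of the hypothesis that $(Q,\mathbf{z})$ is a seed in $\mathrm{QF}(R)$. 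The Laurent rings make sense inside $\mathrm{QF}(R)$ thanks to conditions (1) and (3). So it suffices to show that this intersection lies in $R$.

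The first key step is the commutative-algebra input. Since $R$ is a Noetherian normal domain, $R=\bigcap_{\mathfrak{p}} R_{\mathfrak{p}}$, where $\mathfrak{p}$ runs over the height-one primes of $R$. I will interpret ``coprime'' in (2) and (3) in the sense appropriate for normal domains: two elements are coprime if no height-one prime contains both.

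Now fix $f$ in the intersection and a height-one prime $\mathfrak{p}$; I aim to show $f\in R_{\mathfrak{p}}$. By condition (2), at most one $z_i$ lies in $\mathfrak{p}$.
\begin{enumerate}
\item If no $z_i$ lies in $\mathfrak{p}$, then every $z_j$ is a unit in $R_{\mathfrak{p}}$. Hence $\mathcal{L}_0\subseteq R_{\mathfrak{p}}$, and in particular $f\in R_{\mathfrak{p}}$.
\item If exactly $z_i\in\mathfrak{p}$, then $z_i'\notin\mathfrak{p}$ by condition (3), and $z_j\notin\mathfrak{p}$ for all $j\neq i$. So all generators inverted in $\mathcal{L}_i$ are units of $R_{\mathfrak{p}}$, which gives $f\in\mathcal{L}_i\subseteq R_{\mathfrak{p}}$.
\end{enumerate}
Intersecting over all $\mathfrak{p}$ then gives $f\in R$, so $\mathcal{A}^{\rm up}(Q,\mathbf{z})\subseteq R$.

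The main point needing care is the reduction to height-one primes. One must invoke the fact that a Noetherian normal domain is a Krull domain; finite generation over $\mathbb{C}$ supplies Noetherianity. One must also fix the definition of coprimality so that (2) and (3) really mean ``not contained in a common height-one prime.'' A second, minor point is that the generalized exchange relations have degree $d_i>1$, but the argument never uses the form of $z_i'$ beyond the facts $z_i'\in R$ and $z_i'\notin\mathfrak{p}$. Hence the multinomial exchange relations pose no extra difficulty.
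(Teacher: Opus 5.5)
Your argument is correct and is essentially the Fomin--Pylyavskyy height-one-prime argument that the paper invokes by citation: pass to the intersection of the $n+1$ Laurent rings of the initial seed and its neighbours, then use $R=\bigcap_{\mathrm{ht}\,\mathfrak p=1}R_{\mathfrak p}$. As the paper's Remark asserts, this is why the exchange degrees $d_i$ play no role. Your stipulation that the exchange coefficients $\rho_{i,r}$ lie in $R$ and are not inverted is genuinely needed: if the ground ring contained inverses of coefficients that are non-units of $R$ (as a group ring $\mathbb{ZP}$ would), those inverses would lie in $\mathcal{A}^{\rm up}$ but not in $R$.
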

\begin{remark}
Proposition 3.6 of \cite{FP16} is about cluster algebra and the conclusion in Proposition 3.6 of \cite{FP16} is $\mathcal{A}(Q,\mathbf{z}) \subseteq R$. According to the discussion below Proposition 3.6 of \cite{FP16} and the discussion in Remark 7.7 of \cite{Fra20}, the results in Proposition 3.6 of \cite{FP16} work for generalized cluster algebras and the conditions (1), (2), and (3) actually imply a stronger conclusion $\mathcal{A}^{\rm up}(Q,\mathbf{z}) \subseteq R$. 
\end{remark}

\begin{lemma} \label{lem:upper generalized cluster algebra is contained in Grothendieck ring}
Let $\mathcal{A}$ be the generalized cluster algebra defined in Definition \ref{Def:generalized cluster algebras of $A_2$}. Then its upper generalized cluster algebra $\mathcal{A}^{\rm up}$ is contained in the Grothendieck ring $\mathcal{K}_0(\Rep U_{\varepsilon}^{\res}({L\mathfrak{sl}_{3}}))$. 
\end{lemma}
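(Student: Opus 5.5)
The plan is to apply the Starfish Lemma (Proposition~\ref{Prop: Starfish Lemma}) with $R = \mathcal{K}_0(\Rep U_{\varepsilon}^{\res}(L\mathfrak{sl}_3))\otimes\CC$. I would identify $R$ with its image under $\chi_\varepsilon$, and take the seed to be the initial seed of Definition~\ref{Def:generalized cluster algebras of $A_2$}. I expect the last step, coprimality of $x_1$ with its mutation, to be the main obstacle.

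\textbf{The ring $R$.} By \cite{FM02,FR98} (as used in the proof of Theorem~\ref{Th:isomorphic}), $R$ is a polynomial ring over $\CC$ in the classes of the fundamental modules. It is therefore a finitely generated normal domain, indeed a UFD. The coefficients $\rho_{1,1},\rho_{1,2}$ are classes of the Frobenius pullbacks $L(\mathbf{Y}_{1,a})$ and $L(\mathbf{Y}_{2,a})$, and these lie in $R$.

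\textbf{Condition (1).} This holds since each $x_i$ is the class of a simple module.

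\textbf{Condition (3), membership.} For each $i$ I would write the exchange relation at $v_i$ explicitly. For $i\neq 1$ it is binomial; for $i=1$ it is the degree-$3$ relation with coefficients $\rho_{1,1},\rho_{1,2}$. Lemma~\ref{Lem:exchange relations} and Lemma~\ref{lem: identities hold of type A2} (after the shifts of spectral parameter already implicit there) identify each mutated variable $x_i'$ with the class of a simple module, a Kirillov--Reshetikhin module, a minimal affinization, or a product of such. So $x_i'\in R$. Alternatively, the screening-operator argument in the proof of Theorem~\ref{Th:isomorphic} gives $x_i'\in R$ directly.

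\textbf{Coprimality: the approach.} Since $R$ is a UFD, coprimality means having no common irreducible factor. I would reduce it to statements about highest monomials and primeness. The key observation is that $\chi_\varepsilon$ of a simple module $L(m)$ has the unique maximal monomial $m$. If $f\in R$ divides $\chi_\varepsilon(L(m))$, then $f$ has a unique highest monomial $m_f$, and the cofactor's highest monomial is $m m_f^{-1}$. So a nontrivial factorization of $[L(m)]$ in $R$ gives a factorization of $m$ into two dominant monomials compatible with the $\varepsilon$-character.

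\textbf{Condition (2), pairwise coprimality of the $x_i$.} I would first show that each $x_i$ is irreducible in $R$. Each $x_i$ is the class of a KR module of level $<\ell$ or of the minimal affinization $L(M^{2,2\ell+3,\ell-1}_{1,4,\ell-1})$. For these I would argue primeness as in \cite{CP97b}: the $q$-characters specialize without extra dominant monomials (Lemma~\ref{Lem: minimal affine is special} and \cite[Lemma 5.6]{ALLZ25}), so any factorization of the character would force a factorization of the highest monomial into dominant pieces $m_1m_2$ with $\chi_\varepsilon(L(m_1))\chi_\varepsilon(L(m_2))=\chi_\varepsilon(L(m))$. This can be excluded by exhibiting a monomial of the product, for instance the product of the lowest monomials, that is absent from the path description. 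Once irreducibility is known, distinct $x_i$ have distinct highest monomials, so they are non-associate irreducibles and hence coprime.

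\textbf{Condition (3), coprimality of $x_i$ and $x_i'$.} For $i\neq 1$, both $x_i$ and $x_i'$ are irreducible by the same argument with distinct highest monomials, hence coprime. For $i=1$, the mutated variable is the minimal affinization $L(M^{2,2\ell+1,\ell-1}_{1,2,\ell-1})$ (up to a shift). It is special by Lemma~\ref{Lem: minimal affine is special}, and I would prove its primeness via the same lowest-monomial argument; its highest monomial differs from that of $x_1$. The difficulty lies in the primeness checks for minimal affinizations of level $\ell-1$ at the root of unity. There, cancellations from the cyclic identification $Y_{i,s}=Y_{i,s+2\ell}$ could in principle create extra factorizations, and this has to be excluded carefully using the non-overlapping path model. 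A fallback, if primeness is awkward, is to show directly that no irreducible factor of $x_1$ divides $x_1'$: the relation $x_1x_1'=\theta_1$ together with the $x_j$ ($j\neq1$) being coprime to $x_1$ would suffice, as in \cite{FP16}.
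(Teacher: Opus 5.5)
Your proposal is essentially the paper's proof: you apply the Starfish Lemma (Proposition~\ref{Prop: Starfish Lemma}) to the initial seed with $R$ the Grothendieck ring, identify the mutated variables $x_i'$ as classes of simple modules through the exchange identities of Lemmas~\ref{Lem:exchange relations} and~\ref{lem: identities hold of type A2}, and deduce coprimality from primeness of the KR modules and of the minimal affinization $x_1$ together with highest-monomial comparisons. Two small repairs are needed: in condition (3) you only need $x_i$ irreducible and $m_{x_i'}m_{x_i}^{-1}$ non-dominant (exactly what your leading-monomial observation yields, and what the paper uses), which spares you proving irreducibility of $x_i'$, since your ``same argument'' does not cover, e.g., $x'_{\ell+1}=[L(Y_{1,0}Y_{2,3}^2\cdots Y_{2,2\ell-3}^2Y_{2,2\ell-1})]$, which is neither a KR module nor a minimal affinization; and the product of the lowest monomials can never witness against a factorization $m=m_1m_2$, because lowest monomials are multiplicative and so always appear in $\chi_\varepsilon(L(m))$, so you need a different witness monomial, such as an extra dominant monomial arising where the strings of $m_1$ and $m_2$ meet.
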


\begin{proof}
Let $R=\mathcal{K}_0(\Rep U_{\varepsilon}^{\res}(L\mathfrak{sl}_3))$. We now verify that the initial seed satisfies the three conditions of Starfish Lemma: Proposition \ref{Prop: Starfish Lemma}. 

{\bf Condition (1).} By Theorem \ref{Th:isomorphic}, we have $\mathcal{A} \cong R$. Hence all cluster variables belong to $R$. In particular, initial cluster variables belong to $R$ and condition (1) holds. 

{\bf Condition (2).} We now show that the initial cluster variables are pairwise coprime. The initial cluster $\mathbf{x}=\{x_1,\dots,x_{2\ell-2}\}$ is given by
\begin{align} \label{eq:initial cluster variables for type A2}
\begin{split}
x_1 =& [L(Y_{1,4}Y_{1,6}\cdots Y_{1, 2 \ell}Y_{2,2\ell+3}Y_{2,2\ell+5}\cdots Y_{2,4\ell -1})],\\
x_2 =& [L(Y_{2,2\ell+3}Y_{2,2\ell+5}\cdots Y_{2,4\ell -1})], \quad
x_3 = [L(Y_{2,2\ell+3}Y_{2,2\ell+5}\cdots Y_{2,4\ell -3})], \ \ldots, \ 
x_\ell = [L(Y_{2,2\ell+3})],\\
x_{\ell+1}& = [L(Y_{1,4}Y_{1,6}\cdots Y_{1, 2 \ell-2})], \qquad
x_{\ell+2} = [L(Y_{1,4}Y_{1,6}\cdots Y_{1, 2 \ell-4})], \,\quad\ldots, \,\quad
x_{2\ell-2} = [L(Y_{1,4})].
\end{split} 
\end{align}
The cluster variables $x_2, x_3, \ldots, x_{2\ell-2}$ correspond to Kirillov--Reshetikhin modules. These Kirillov--Reshetikhin modules are $\ell$-acyclic (see (\ref{mathbf{Y}})), and hence they are prime. For the cluster variable $x_1$, by computing the $\varepsilon$-character of the corresponding minimal affinization (see Lemma \ref{Lem: minimal affine is special}), we conclude that this module is also prime. Therefore, all initial cluster variables are prime and different from each other, and hence they are pairwise coprime.

{\bf Condition (3).} Let $x_k$ be a cluster variable in the initial seed and let $x_k'$ be the cluster variable obtained by mutating $x_k$. We will show that $x_k$ and $x'_k$ are coprime. Since we know that the initial cluster variable $x_k$ corresponds to a simple prime module, it suffices to show that $x_k'$ corresponds to a simple module and the dominant monomial of $x_k$ is not a factor of the dominant monomial of $x_k'$. According to the initial quiver in Figure \ref{fig:initial quiver of A}, we consider the following cases separately.

Case (1). By Lemma \ref{Lem:exchange relations}, for the vertex $v_1$, we know that the exchange relation is
\begin{align*}
x_1\,x'_1=x_2^3+\rho_{1,1}\,x_2^2\,x_{\ell+1}+\rho_{1,2}\,x_2\,x_{\ell+1}^2+x_{\ell+1}^3,
\end{align*} 
where $x_1$, $x_2$, $x_{\ell+1}$ are given in (\ref{eq:initial cluster variables for type A2}), $\rho_{1,1}=[L(Y_{1,0}Y_{1,2}\cdots Y_{1, 2 \ell-2})]$, $\rho_{1,2}=[L(Y_{2,1}Y_{2,3}\cdots Y_{2, 2 \ell-1})]$, and 
\[x'_1= [L(Y_{1,2}Y_{1,4}\cdots Y_{1, 2 \ell-2}Y_{2,2 \ell+3}Y_{2,2 \ell+5}\cdots Y_{2,4 \ell-1})]\in R.
\]
Since the module $L(Y_{1,4}Y_{1,6}\cdots Y_{1, 2 \ell}Y_{2,2\ell+3}Y_{2,2\ell+5}\cdots Y_{2,4\ell -1})$ corresponding to $x_1$ is prime, and its dominant monomial is not a factor of the dominant monomial of the module 
\[
L(Y_{1,2}Y_{1,4}\cdots Y_{1, 2 \ell-2}Y_{2,2 \ell+3}Y_{2,2 \ell+5}\cdots Y_{2,4 \ell-1}),
\] 
which corresponds to $x'_1$, it follows that $x_1$ and $x'_1$ are coprime.

Case (2). By Lemma \ref{lem: identities hold of type A2}, for the vertices $v_2$ and $v_{\ell+1}$, the exchange relations are respectively
\begin{align*}
x_2\,x'_2 = x_1\,x_3 + x_{\ell+1}^2, \quad 
x_{\ell+1}\,x'_{\ell+1} = x_1\,x_3 + x_2^2\,x_{\ell+2},
\end{align*}
where
\[
x'_2 = [L(Y_{1,4}\cdots Y_{1, 2 \ell}Y_{2,2 \ell+3}Y_{2,2 \ell+5}\cdots Y_{2,4 \ell-3})],\quad
x'_{\ell+1} = [L(Y_{1,0}Y_{2,3}^2Y_{2,5}^2\cdots Y_{2,2 \ell-3}^2 Y_{2,2 \ell-1})].
\]
Since the module $L(Y_{2,2\ell+3}Y_{2,2\ell+5}\cdots Y_{2,4\ell -1})$ corresponding to $x_2$ is prime, and its dominant monomial is not a factor of the dominant monomial of the module $[L(Y_{1,4}\cdots Y_{1, 2 \ell}Y_{2,2 \ell+3}Y_{2,2 \ell+5}\cdots Y_{2,4 \ell-3})]$, which corresponds to $x'_2$, it follows that $x_2$ and $x'_2$ are coprime. Similarly, since the module $L(Y_{1,4}Y_{1,6}\cdots Y_{1,2\ell-2})$ corresponding to $x_{\ell+1}$ is prime, and its dominant monomial $Y_{1,4}Y_{1,6}\cdots Y_{1,2\ell-2}$ is not a factor of the dominant monomial of the module
\[
L(Y_{1,0}Y_{2,3}^2Y_{2,5}^2\cdots Y_{2,2\ell-3}^2Y_{2,2\ell-1}),
\]
which corresponds to $x'_{\ell+1}$, we have that $x_{\ell+1}$ and $x'_{\ell+1}$ are coprime.

Case (3). By Lemma \ref{lem: identities hold of type A2}, for the vertex $v_j$, where $j\in[3,\ell-1]\cup[\ell+2,2\ell-3]$, the exchange relation is given by
\begin{align*}
x_j\,x'_j=x_{j-1}\,x_{\alpha(j)} + x_{j+1}\,x_{\beta(j)},
\end{align*}
where
\[
x_j=
\begin{cases}
[L(Y_{2,3}Y_{2,5}\cdots Y_{2, 2 \ell-2j+3})], & j\in[3,\ell-1],\\
[L(Y_{1,4}Y_{1,6}\cdots Y_{1, 4 \ell-2j})], & j\in[\ell+2,2\ell-2],
\end{cases}
\]
\[
(\alpha(j),\beta(j))=
\begin{cases}
(\ell+j-1,\ \ell+j-2), & j\in[3,\ell-1],\\
(j-\ell+2,\ j-\ell+1), & j\in[\ell+2,2\ell-2],
\end{cases}
\]
and
\[
x'_j=
\begin{cases}
[L(Y_{1,4}Y_{1,6}\cdots Y_{1, 2 \ell-2j+2}Y_{2, 2 \ell-2j+5})], & j\in[3,\ell-1],\\
[L(Y_{2,3}Y_{2,5}\cdots Y_{2, 4 \ell-2j-1}Y_{1, 4 \ell-2j+2})], & j\in[\ell+2,2\ell-2].
\end{cases}
\]
Since the module corresponding to $x_j$, $j\in[3,\ell-1]\cup[\ell+2,2\ell-3]$, is prime, and its dominant monomial is not a factor of the dominant monomial of the module corresponding to $x'_j$, it follows that $x_j$ and $x'_j$ are coprime.

Case (4). By Lemma \ref{Lem:exchange relations}, for the vertex $v_\ell$, the exchange relation is
\[
x_\ell\,x'_\ell=x_{\ell+1}+x_{2\ell-2},
\]
where $x'_\ell=[L(Y_{2,2\ell+5})]$. By the same argument as above, we have that $x_\ell$ and $x'_\ell$ are coprime.

Therefore, for each initial cluster variable $x_j$, the seed mutation $\mu_{x_j}$ replaces $x_j$ with an element $x'_j \in R$. Moreover, $x_j$ and $x'_j$ are coprime. 

Collecting the above, all conditions of the Starfish Lemma (Proposition \ref{Prop: Starfish Lemma}) are satisfied, and we conclude that $\mathcal A^{\rm up}\subseteq R$.
\end{proof}

By Theorem \ref{Th:isomorphic} and Lemma \ref{lem:upper generalized cluster algebra is contained in Grothendieck ring}, we have the following.

\begin{theorem}
Let $\varepsilon^{2 \ell}=1$ with $\ell\in\ZZ_{\geq 2}$. Then the generalized cluster algebra $\mathcal{A}$ defined in Definition~\ref{Def:generalized cluster algebras of $A_2$} coincides with its upper cluster algebra $\mathcal{A}^{\rm up}$, and they are isomorphic to the Grothendieck ring $\mathcal{K}_0(\Rep U_{\varepsilon}^{\res}(L\mathfrak{sl}_3))$.
\end{theorem}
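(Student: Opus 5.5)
The statement follows by combining Theorem~\ref{Th:isomorphic} with Lemma~\ref{lem:upper generalized cluster algebra is contained in Grothendieck ring} and the trivial inclusion $\mathcal{A}\subseteq\mathcal{A}^{\rm up}$. Write $R=\mathcal{K}_0(\Rep U_{\varepsilon}^{\res}(L\mathfrak{sl}_3))$, viewed through the injective ring homomorphism $\chi_\varepsilon$ as a subring of the Laurent polynomial ring in the $Y_{i,a}$. All three objects $\mathcal{A}$, $\mathcal{A}^{\rm up}$ and $R$ then sit inside a common ambient field, namely the fraction field of $R$, which contains the initial cluster. The goal is to show
\[
\mathcal{A}\subseteq \mathcal{A}^{\rm up}\subseteq R\subseteq \mathcal{A},
\]
so that all three coincide.

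\textbf{Step 1: $\mathcal{A}\subseteq\mathcal{A}^{\rm up}$.} This is the Laurent phenomenon for generalized cluster algebras, \cite[Theorem 2.5]{CS14}. Every cluster variable is a Laurent polynomial in every cluster of the mutation class. Hence every cluster variable lies in each $\mathbb{ZP}[\mathbf{x}_t^{\pm1}]$, and therefore in their intersection. Here the coefficients $\rho_{1,1},\rho_{1,2}$ are themselves classes in $R$, and we regard $\mathbb{ZP}$ as generated by them.

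\textbf{Step 2: $\mathcal{A}^{\rm up}\subseteq R$.} This is exactly Lemma~\ref{lem:upper generalized cluster algebra is contained in Grothendieck ring}. That lemma applies the Starfish Lemma, Proposition~\ref{Prop: Starfish Lemma}, to the initial seed of Definition~\ref{Def:generalized cluster algebras of $A_2$}. The Starfish Lemma needs $R$ to be a finitely generated normal domain, so we also record this hypothesis. Since $\mathcal{C}_{\varepsilon,\xi}$ has finitely many fundamental modules $L(Y_{i,s})$, $R$ is a polynomial ring in finitely many classes of fundamental modules, by \cite{FM02,FR98}. A polynomial ring over $\mathbb{C}$ (or $\mathbb{Z}$) is normal.

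\textbf{Step 3: $R\subseteq\mathcal{A}$.} This is the first half of the proof of Theorem~\ref{Th:isomorphic}. By Theorem~\ref{Th:KR mutation sequence}, every fundamental class $[L(Y_{i,s})]$ is a real Kirillov--Reshetikhin class of level $1\le\ell-1$, so it appears as a cluster variable. Since these classes generate $R$, we get $R\subseteq\mathcal{A}$. Chaining the three inclusions gives $\mathcal{A}=\mathcal{A}^{\rm up}=R$.

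The main obstacle is the case $\ell=2$. Lemmas~\ref{Lem:exchange relations} and~\ref{lem: identities hold of type A2} are stated only for $\ell\ge3$, and the mutation sequences in Theorem~\ref{Th:KR mutation sequence} were verified in that range. For $\ell=2$, the rank is $2$, the seed is of type $G_2$, and the algebra is of finite type. There I would not redo the argument but invoke Gleitz \cite[Theorem 5.3]{Gle16}, which identifies $R$ with this generalized cluster algebra. For the upper algebra in that case, I would note that a finite-type, acyclic rank-two seed satisfies $\mathcal{A}=\mathcal{A}^{\rm up}$ by the standard generalized analogue of the Berenstein--Fomin--Zelevinsky argument. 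Alternatively, I would check the three Starfish conditions directly on the two exchange relations.
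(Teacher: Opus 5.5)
Your proposal is correct and is essentially the paper's argument. The paper derives the theorem directly from Theorem~\ref{Th:isomorphic} (whose fundamental-class half gives $R\subseteq\mathcal{A}$) and Lemma~\ref{lem:upper generalized cluster algebra is contained in Grothendieck ring} ($\mathcal{A}^{\rm up}\subseteq R$ via the Starfish Lemma), leaving $\mathcal{A}\subseteq\mathcal{A}^{\rm up}$ implicit, which is exactly your chain of inclusions; your extra remarks on normality of $R$ (a polynomial ring on the $2\ell$ fundamental classes of $\mathcal{C}_{\varepsilon,\xi}$) and on handling $\ell=2$ separately (since Lemmas~\ref{Lem:exchange relations} and~\ref{lem: identities hold of type A2} assume $\ell\ge 3$) address points the paper passes over silently.
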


\section{\texorpdfstring{An example for $U_\varepsilon^{\res}(L\mathfrak{sl}_4)$}{An example in type A3}} \label{sec:example in type A3}

Let $k=4$, $\ell=2$, and let $\mathcal{A}$ be the generalized cluster algebra defined in Conjecture \ref{Conj: rewrite Fraser's conjecture}. In this section, we explain that the upper generalized cluster algebra $\mathcal{A}^{\rm up}$ is isomorphic to the Grothendieck ring $K_0(\Rep U_\varepsilon^{\res}(L\mathfrak{sl}_4))$. Moreover, we expect that $\mathcal{A}$ is strictly contained in $K_0(\Rep U_\varepsilon^{\res}(L\mathfrak{sl}_4))$.

\subsection{\texorpdfstring{The generalized cluster algebra for $U_\varepsilon^{\res}(L\mathfrak{sl}_4)$ with $\ell=2$}{The generalized cluster algebra for A3 and l=2}}\label{sub:The generalized cluster algebra for A_3}
Let $\varepsilon^{2 \ell}=1$ with $\ell=2$. For $U_\varepsilon^{\res}(L\mathfrak{sl}_4)$, the generalized cluster algebra $\mathcal{A}$ constructed in Conjecture \ref{Conj: rewrite Fraser's conjecture} is as follows. The initial cluster of $\mathcal{A}$ is $\mathbf{x}=\{x_1,x_2,x_3\}$, where
\[
x_1=[L(Y_{1,2}Y_{2,1}Y_{3,0})],\quad x_2=[L(Y_{1,2}Y_{2,1})],\quad x_3=[L(Y_{1,2})].
\]
The exchange degrees are $d_1=4$ and $d_i=1$ for $i=2,3$. The exchange matrix is
\begin{equation*}
\left(\begin{array}{ccc}
0&1&-1\\
1&0&-2\\
-1&2&0
\end{array} \right).
\end{equation*}
The exchange matrix and exchange degrees correspond to the following quiver:
\[\begin{tikzcd}
	& \begin{array}{c} d_1=4\\x_1\\v_1 \end{array} & \\
	\begin{array}{c} v_2\\x_2\\d_2=1 \end{array} && \begin{array}{c} v_3\\x_3\\d_3=1 \end{array}
	\arrow[shift right, from=1-2, to=2-3]
	\arrow[shift right, from=2-1, to=1-2]
	\arrow[shift right=3, from=2-3, to=2-1]
	\arrow[shift right=4, from=2-3, to=2-1]
\end{tikzcd}\]
The generalized exchange relation is 
\begin{align}\label{Eq:A_3 generalized exchange relation}
\theta_1[\rho_{1}](u,v)= v^4 + \rho_{1,1}\, v^3 \,u + \rho_{1,2}\,v^2\, u^2 + \rho_{1,3}\, v\, u^3 + u^4,
\end{align}
where 
\[
u= \displaystyle \prod_{i=1}^3 x_i^{\lbrack b_{i1}\rbrack_+},\,v= \displaystyle \prod_{i=1}^3 x_i^{\lbrack -b_{i1}\rbrack_+},\,\rho_{1,1}=[L(Y_{1,0}Y_{1,2})], \, \rho_{1,2}=[L(Y_{2,1}Y_{2,3})], \text{ and }\rho_{1,3}=[L(Y_{3,0}Y_{3,2})].
\]
For $j=2$, $3$, the exchange relations are
\begin{align}\label{Eq:A_3 exchange relation 1}
\theta_j(u,v)=\displaystyle \prod_{i=1}^3 x_i^{\lbrack b_{ij}\rbrack_+} + \prod_{i=1}^3 x_i^{\lbrack -b_{ij}\rbrack_+}.
\end{align}
 
\subsection{\texorpdfstring{Mutation sequences}{Mutation sequence}}\label{sub: mutation sequence of A_3} In this subsection, we study mutation sequences of $\mathcal{A}$. Since mutation is an involution, we do not perform mutations on the same cluster variable consecutively. Moreover, we have $d_1=4$ and $d_i=1$ for $i=2,3$. Note that, under arbitrary mutations of the initial cluster, the associated quiver is unique up to isomorphism as follows:
\[\begin{tikzcd}
	& \begin{array}{c} y_1\\v_1 \end{array} \\
	\begin{array}{c} v_2\\y_2 \end{array} && \begin{array}{c} v_3\\y_3 \end{array}
	\arrow[from=1-2, to=2-3]
	\arrow[from=2-1, to=1-2]
	\arrow[shift right=3, from=2-3, to=2-1]
	\arrow[shift right=2, from=2-3, to=2-1]
\end{tikzcd}\]\\
where $\{y_1,y_2,y_3\}$ is obtained from the initial cluster of $\mathcal{A}$ by a sequence of mutations.

Starting from the initial cluster variables
\[
x_1=[L(Y_{1,2}Y_{2,1}Y_{3,0})],\qquad
x_2=[L(Y_{1,2}Y_{2,1})],\qquad
x_3=[L(Y_{1,2})],
\]
we first mutate at $x_2$, and then perform an arbitrary sequence of mutations. In this situation, there are two cases as follows. Let $\mu_i$ denote mutation at the $i$-th vertex.
\begin{align*}
\text{Case (1)}.\quad & \text{initial cluster }\mathbf{x}: &[L(Y_{1,2}Y_{2,1}Y_{3,0})],\qquad &[L(Y_{1,2}Y_{2,1})],\qquad &[L(Y_{1,2})];\qquad\\
&\mu_2(\mathbf{x}):  &[L(Y_{1,2}Y_{2,1}Y_{3,0})], \qquad &[L(Y_{3,0})],\qquad&[L(Y_{1,2})];\qquad \\
&\mu_1\mu_2(\mathbf{x}):  &[L(Y_{1,2}Y_{2,3}Y_{3,0})],\qquad&[L(Y_{3,0})],\qquad&[L(Y_{1,2})];\qquad \\
&\mu_2\mu_1\mu_2(\mathbf{x}) : &[L(Y_{1,2}Y_{2,3}Y_{3,0})],\qquad &[L(Y_{1,2}Y_{2,3})],\qquad&[L(Y_{1,2})];\qquad \\
&\mu_3\mu_1\mu_2(\mathbf{x}): &[L(Y_{1,2}Y_{2,3}Y_{3,0})],\qquad &[L(Y_{3,0})],\qquad &[L(Y_{2,3}Y_{3,0})];\qquad \\
&\mu_1\mu_2\mu_1\mu_2(\mathbf{x}) : &[L(Y^3_{1,2}Y^2_{2,3}Y_{3,2})],\qquad &[L(Y_{1,2}Y_{2,3})],\qquad&[L(Y_{1,2})];\qquad \\
&\mu_3\mu_2\mu_1\mu_2(\mathbf{x}) : &[L(Y_{1,2}Y_{2,3}Y_{3,0})],\qquad &[L(Y_{1,2}Y_{2,3})],\qquad&[L(Y_{1,2}Y^2_{2,3})];\qquad \\
&\mu_1\mu_3\mu_1\mu_2(\mathbf{x}): &[L(Y_{1,0}Y^2_{2,3}Y^3_{3,0})],\qquad &[L(Y_{3,0})],\qquad &[L(Y_{2,3}Y_{3,0})];\qquad \\
&\mu_2\mu_3\mu_1\mu_2(\mathbf{x}): &[L(Y_{1,2}Y_{2,3}Y_{3,0})],\qquad &[L(Y^2_{2,3}Y_{3,0})],\qquad &[L(Y_{2,3}Y_{3,0})].\qquad 
\end{align*}
\begin{align*}
\text{Case (2)}.\quad& \text{initial cluster }\mathbf{x}:&[L(Y_{1,2}Y_{2,1}Y_{3,0})],\qquad & [L(Y_{1,2}Y_{2,1})], & [L(Y_{1,2})];\qquad\\
&\mu_2(\mathbf{x}):  & [L(Y_{1,2}Y_{2,1}Y_{3,0})], \qquad &[L(Y_{3,0})],\qquad &[L(Y_{1,2})];\qquad \\
&\mu_3\mu_2(\mathbf{x}):  & [L(Y_{1,2}Y_{2,1}Y_{3,0})],\qquad &[L(Y_{3,0})], \qquad &
[L(Y_{2,1}Y_{3,0})];\qquad \\
&\mu_1\mu_3\mu_2(\mathbf{x}):  & [L(Y_{1,0}Y^2_{2,1}Y^3_{3,0})],\qquad &[L(Y_{3,0})], \qquad & [L(Y_{2,1}Y_{3,0})];\qquad\\
&\mu_2\mu_3\mu_2(\mathbf{x}):  & [L(Y_{1,2}Y_{2,1}Y_{3,0})],\qquad &[L(Y^2_{1,2}Y_{3,0})], \qquad & [L(Y_{2,1}Y_{3,0})].\qquad
\end{align*}
Starting from these two cases, we perform arbitrary further mutations. Each mutation replaces a cluster variable by another, whose corresponding module has its dominant monomial determined by the highest exponent term in the exchange polynomial. Any further sequence of mutations produces cluster variables, which correspond to simple modules whose dominant monomials have increasing exponents. In the same way, if we first mutate cluster variable $x_1$ or $x_3$, then further arbitrary mutations give cluster variables, which correspond to simple modules whose dominant monomials also have increasing exponents.

\subsection{\texorpdfstring{$\mathcal{K}_0(\Rep U_{\varepsilon}^{\res}({L\mathfrak{sl}_{4}}))$ is isomorphic to $\mathcal{A}^{\mathrm{up}}$}{Grothendieck ring is isomorphic to upper generalized cluster algebra}}  \label{subsec:Grothendieck ring in type A3 is isomorphic to upper cluster algebra}
First, we show that all fundamental modules of $U_\varepsilon^{\res}(L\mathfrak{sl}_4)$ lie in $\mathcal{A}^{\rm up}$. In particular, we obtain $\mathcal{K}_0(\Rep U_{\varepsilon}^{\res}(L\mathfrak{sl}_4)) \subseteq \mathcal{A}^{\mathrm{up}}$. Consider the mutation at the cluster variable $x_2$ in the initial cluster $\mathbf{x}$. We have
\[
[L(Y_{1,2}Y_{2,1})][L(Y_{3,0})]
=[L(Y_{1,2}Y_{2,1}Y_{3,0})]+[L(Y_{1,2})]^2,
\]
which yields
\[
[L(Y_{3,0})]=\frac{x_1+x_3^2}{x_2}.
\]
Using $\varepsilon$-characters, we further obtain
\[
[L(Y_{2,1})][L(Y_{1,2})]=[L(Y_{1,2}Y_{2,1})]+[L(Y_{3,0})].
\]
Hence,
\[
[L(Y_{2,1})]=\frac{x_2+\frac{x_1+x_3^2}{x_2}}{x_3}=\frac{x_1+x_2^2+x_3^2}{x_2x_3}.
\]
By the definition of the upper generalized cluster algebra (see (\ref{Eq: the definition of upper generalized cluster algebra})), we conclude that
\[
[L(Y_{2,1})]\in \mathcal{A}^{\rm up}.
\]
Next, the mutation sequence $\mu_2\mu_1\mu_2(\mathbf{x})$ (refer to the fourth line in Case (1) of Section \ref{sub: mutation sequence of A_3}) yields 
\[
[L(Y_{1,2}Y_{2,3})]=\frac{u^3+\rho_{1,1}\,u^2x_3+\rho_{1,2}\,ux^2_3+\rho_{1,3}\,x^3_3+x_2x^2_3}{x_1},
\]
where $u=\frac{x_1+x^2_3}{x_2}$. Again, since 
\[
[L(Y_{2,3})][L(Y_{1,2})]=[L(Y_{1,2}Y_{2,3})]+[L(Y_{3,0})],
\]
we find that
\[
[L(Y_{2,3})]=\frac{u^3+\rho_{1,1}\,u^2x_3+\rho_{1,2}\,ux^2_3+\rho_{1,3}\,x^3_3+x_2x^2_3+x_1u}{x_1x_3}\in \mathcal{A}^{\rm up}.
\]
Furthermore, using $\varepsilon$-characters, we have
\[
[L(Y_{1,0})][L(Y_{1,2})]=[L(Y_{1,0}Y_{1,2})]+[L(Y_{2,1})]+[L(Y_{2,3})].
\]
Consequently, we can express $[L(Y_{1,0})]$ as
\begin{align*}
[L(Y_{1,0})]&=\frac{\rho_{1,1}+[L(Y_{2,1})]+[L(Y_{2,3})]}{x_3}\\
            &=\frac{\rho_{1,1}}{x_3}+\frac{x_1+x^2_2+x^2_3}{x_2x^2_3}+\frac{u^3+\rho_{1,1}\,u^2x_3+\rho_{1,2}\,ux^2_3+\rho_{1,3}\,x^3_3+x_2x^2_3+x_1u}{x_1x^2_3}\in \mathcal{A}^{\rm up}.
\end{align*}
By a similar approach, we have 
\[
[L(Y_{3,0})][L(Y_{3,2})]=[L(Y_{3,0}Y_{3,2})]+[L(Y_{2,1})]+[L(Y_{2,3})].
\]
Consequently, we can express $[L(Y_{3,2})]$ as
\begin{align*}
[L(Y_{3,2})]&=\frac{\rho_{1,3}+[L(Y_{2,1})]+[L(Y_{2,3})]}{[L(Y_{3,0})]}\\
            &=\frac{u^2+\rho_{1,1}\,ux_3+\rho_{1,2}\,x^2_3+\rho_{1,3}\,x_2x_3+x^2_2+2x_1}{x_1x_3}\in \mathcal{A}^{\rm up},
\end{align*}
where $u=\frac{x_1+x^2_3}{x_2}$.

Since $[L(Y_{1,2})]$ and $[L(Y_{3,0})]$ are cluster variables of $\mathcal{A}$, and $\mathcal{A} \subseteq \mathcal{A}^{\mathrm{up}}$, we obtain that $[L(Y_{1,2})]$,\, $[L(Y_{3,0})] \in \mathcal{A}^{\mathrm{up}}$.
Moreover, we have shown that the classes $[L(Y_{2,1})]$,\ $[L(Y_{2,3})]$,\ $[L(Y_{1,0})]$,\ and $[L(Y_{3,2})]$ also belong to $\mathcal{A}^{\mathrm{up}}$. Hence $\mathcal{A}^{\mathrm{up}}$ contains the classes of all fundamental $U_{\varepsilon}^{\res}(L\mathfrak{sl}_4)$-modules. Moreover, the Grothendieck ring
$\mathcal{K}_0(\Rep U_{\varepsilon}^{\res}(L\mathfrak{sl}_4))$ is generated as a polynomial ring by the classes of fundamental modules, it follows that
\[
\mathcal{K}_0(\Rep U_{\varepsilon}^{\res}(L\mathfrak{sl}_4))
\subseteq \mathcal{A}^{\mathrm{up}}.
\]

Let $R=\mathcal{K}_0(\Rep U_{\varepsilon}^{\res}(L\mathfrak{sl}_4))$. In the following, we prove the reverse inclusion $\mathcal{A}^{\mathrm{up}} \subseteq R$. We now verify that the initial seed satisfies the three conditions of Starfish Lemma: Proposition \ref{Prop: Starfish Lemma}.

{\bf Condition (1).} By the construction, each initial cluster variable $x_i$ is of the form $[L(m_i)]$, hence belongs to $R$. 

{\bf Condition (2).}  We now show that the initial cluster variables are pairwise coprime. The initial cluster of $\mathcal{A}$ (see Section \ref{sub:The generalized cluster algebra for A_3}) is
\[
\mathbf{x}=\{x_1,x_2,x_3\},\quad\textit{where} \quad
x_1=[L(Y_{1,2}Y_{2,1}Y_{3,0})],\quad
x_2=[L(Y_{1,2}Y_{2,1})],\quad
x_3=[L(Y_{1,2})].
\]
Since $x_3$ corresponds to a fundamental module, $x_3$ is prime. Using $\varepsilon$-characters, we  obtain that $x_1$, $x_2$ are prime. Therefore, all initial cluster variables are prime and they are different from each other, and hence they are pairwise coprime.

{\bf Condition (3).} Let $x_k$ be a cluster variable in the initial seed and let $x_k'$ be the cluster variable obtained by mutating $x_k$. We will show that $x_k$ and $x'_k$ are coprime. According to the initial quiver in Section \ref{sub:The generalized cluster algebra for A_3}, we consider the following cases separately. 

Case (1). For the vertex $v_1$, the generalized exchange relation is
\begin{align}\label{x1 of A3}
x_1\,x'_1=x_3^4+\rho_{1,1}\,x_3^3\,x_2+\rho_{1,2}\,x_3^2\,x_2^2+\rho_{1,3}\,x_3\,x_2^3+x_2^4,
\end{align}
where $\rho_{1,1}=[L(Y_{1,0}Y_{1,2})]$, $\rho_{1,2}=[L(Y_{2,1}Y_{2,3})]$, $\rho_{1,3}=[L(Y_{3,0}Y_{3,2})]$, and 
\[
x'_1=[L(Y^3_{1,2}Y^2_{2,1}Y_{3,2})].
\]
So, $x'_1$ belongs to $R$. Moreover, the module $L(Y_{1,2}Y_{2,1}Y_{3,0})$ corresponding to $x_1$ is prime, and its dominant monomial $Y_{1,2}Y_{2,1}Y_{3,0}$ does not divide the dominant monomial of the module $L(Y^3_{1,2}Y^2_{2,1}Y_{3,2})$, which corresponds to $x'_1$. Therefore, $x_1$ and $x'_1$ are coprime.

Case (2). For the vertices $v_2$ and $v_3$, the exchange relations are respectively
\begin{align*}
x_2x_2' = x_1 + x_3^2,\quad
x_3x_3' = x_1 + x_2^2,
\end{align*}
where $x'_2=[L(Y_{3,0})]$, $x'_3=[L(Y_{1,2}Y^2_{2,1})]$. So, $x'_2$, $x'_3$ belong to $R$. Since $x_2$, $x_2'$ are prime and $x_2 \ne x_2'$, it follows that $x_2$ and $x_2'$ are coprime. Since the module $L(Y_{1,2})$ corresponding to $x_3$ is prime, if $L(Y_{1,2})$ and $L(Y_{1,2}Y_{2,1}^2)$ are not coprime, then $L(Y_{1,2})$ must appear as a tensor factor of $L(Y_{1,2}Y_{2,1}^2)$. Using $\varepsilon$-characters, we obtain
\[
L(Y_{1,2}Y_{2,1}^2) \ncong L(Y_{1,2}) \otimes L(Y_{2,1}^2),\quad
L(Y_{1,2}Y_{2,1}^2) \ncong L(Y_{1,2}) \otimes L(Y_{2,1}) \otimes L(Y_{2,1}).
\]
Therefore $L(Y_{1,2})$ is not a tensor factor of $L(Y_{1,2}Y_{2,1}^2)$. It follows that $x_3=[L(Y_{1,2})]$ and $x_3'=[L(Y_{1,2}Y_{2,1}^2)]$ are coprime. 

All assumptions of Proposition \ref{Prop: Starfish Lemma} are satisfied. Therefore, $\mathcal{A}^{\mathrm{up}} \subseteq R$. Consequently, we conclude that
\[
\mathcal{K}_0(\Rep U_{\varepsilon}^{\res}(L\mathfrak{sl}_4)) \cong \mathcal{A}^{\mathrm{up}}.
\]

\subsection{\texorpdfstring{On the possible non-membership of the classes 
$[L(Y_{2,1})]$ and $[L(Y_{2,3})]$ in $\mathcal{A}$ }{On the possible non-membership of [L(Y{2,1})] and [L(Y{2,3})] in A}}
Note that the $U_{\varepsilon}^{\res}(L\mathfrak{sl}_4)$-modules $\chi_\varepsilon(L(Y_{2,1}))$ and $\chi_\varepsilon(L(Y_{2,3}))$ are not real. Indeed, we have that $\chi_\varepsilon(L(Y_{2,1})) = \chi_q(L(Y_{2,1}))|_{q=\varepsilon}$. Therefore $\chi_\varepsilon(L(Y_{2,1}))^2$ has $6^2=36$ monomials (counted with multiplicity). On the other hand, the monomials in $\chi_\varepsilon(L(Y_{2,1}^2)) = \chi_\varepsilon(L(Y_{2,1}Y_{2,5}))$ is a subset of the monomials in $\chi_q(L(Y_{2,1}Y_{2,5}))|_{q=\varepsilon}$. Apply the path description \cite{MY12} of the $q$-characters of snake modules to $\chi_q(L(Y_{2,1}Y_{2,5}))$, we find that $\chi_q(L(Y_{2,1}Y_{2,5}))|_{q=\varepsilon}$ has $35$ monomials (counted with multiplicity). It follows that $\chi_\varepsilon(L(Y_{2,1}))^2 \ne \chi_\varepsilon(L(Y_{2,1}^2))$. Similarly, $\chi_\varepsilon(L(Y_{2,3}))^2 \ne \chi_\varepsilon(L(Y_{2,3}^2))$. 
Since the modules $L(Y_{2,1})$ and $L(Y_{2,3})$ are not real, they cannot be cluster variables and they cannot be obtained by mutations from an initial seed of $\mathcal{A}$.

In Section \ref{subsec:Grothendieck ring in type A3 is isomorphic to upper cluster algebra}, we show that $[L(Y_{2,1})]$ and $[L(Y_{2,3})]$ can be expressed as Laurent polynomials in the initial cluster variables of $\mathcal{A}$. In the following, we give some indication that $[L(Y_{2,1})]$ and $[L(Y_{2,3})]$ may not admit a polynomial expression in the cluster variables of $\mathcal{A}$. Equivalently, $\chi_\varepsilon(L(Y_{2,1}))$ and $\chi_\varepsilon(L(Y_{2,3}))$ may not be generated from the $\varepsilon$-characters of the cluster variables by addition and multiplication. Consider $[L(Y_{2,1})]$ (the case of $[L(Y_{2,3})]$ is similar). We expect that the following are all possible polynomial identities satisfied by $[L(Y_{2,1})]$, where all monomials involving $[L(Y_{2,1})]$ have degree $1$:
\begin{align*} 
\chi_\varepsilon(L(Y_{2,1}))+\chi_\varepsilon(L(Y_{2,3}))
&=\chi_\varepsilon(L(Y_{1,0}))\chi_\varepsilon(L(Y_{1,2}))-\chi_\varepsilon(L(Y_{1,0}Y_{1,2})),\\
\chi_\varepsilon(L(Y_{2,1}))+\chi_\varepsilon(L(Y_{2,3}))
&=\chi_\varepsilon(L(Y_{3,0}))\chi_\varepsilon(L(Y_{3,2}))-\chi_\varepsilon(L(Y_{3,0}Y_{3,2})).
\end{align*}
In these identities, $[L(Y_{2,1}]$, $[L(Y_{2,3})]$ are not cluster variables. These identities suggest that $\chi_\varepsilon(L(Y_{2,1}))$ cannot be expressed as a polynomial in the cluster variables of $\mathcal{A}$ and we expect that $\mathcal{A} \subsetneqq \mathcal{A}^{\mathrm{up}}$.

\subsection*{Acknowledgments}
The authors are very grateful to Chris Fraser for many helpful discussions.
The work was partially supported by the National Natural Science Foundation of China (Nos. 12171213, 12271224). JR Li was supported by the Austrian Science Fund (FWF), PAT 9039323, Grant-DOI 10.55776/PAT9039323.

\end{document}